\theoremstyle{plain}
\newtheorem{cor}{Corollary}
\newtheorem{lem}[cor]{Lemma}
\newtheorem{prop}[cor]{Proposition}
\newtheorem{thm}[cor]{Theorem}
\newtheorem*{thm*}{Theorem}
\theoremstyle{definition}
\newtheorem{definition}[cor]{Definition}
\newtheorem{remark}[cor]{Remark}
\newtheorem{assumption}[cor]{Assumption}
\numberwithin{cor}{section}
\numberwithin{equation}{section}
\DeclareMathOperator{\C}{C}
\DeclareMathOperator{\Supp}{Supp}
\newcommand{\8}{\infty}
\newcommand{\E}{\mathbb{E}}
\newcommand{\esssup}{\text{ess sup}}
\renewcommand{\d}{\delta}
\renewcommand{\and}{\quad\textrm{ and }\quad}
\renewcommand{\P}{\mathbb{P}}
\renewcommand{\o}{\omega}
\renewcommand{\O}{\Omega}
\newcommand{\sgn}{\text{sgn}}
\newcommand{\Ent}{\text{Ent}}
\newcommand{\F}{\mathcal F}
\newcommand{\mcS}{\mathcal{S}}
\newcommand{\R}{\mathbb{R}}
\newcommand{\TT}{\mathbb{T}}
\newcommand{\N}{\mathbb{N}}
\newcommand{\Z}{\mathbb{Z}}
\newcommand{\norm}[1]{\left\| #1 \right\|}
\newcommand{\D}{\Delta}
\newcommand{\s}{\sigma}
\newcommand{\ve}{\varepsilon}
\newcommand{\abs}[1]{\left|#1\right|}
\providecommand{\ud}[1]{\, \mathrm{d} #1}
\providecommand{\dx}{\ud{x}}
\providecommand{\dy}{\ud{y}}
\providecommand{\dxi}{\ud \xi}
\providecommand{\deta}{\ud{\eta}}
\providecommand{\dr}{\ud{r}}
\providecommand{\dxp}{\ud{x'}}
\providecommand{\dxip}{\ud{\xi'}}
\providecommand{\dyp}{\ud{y'}}
\providecommand{\ds}{\ud{s}}
\providecommand{\dt}{\ud{t}}
\providecommand{\dz}{\ud{z}}
\providecommand{\dd}{\ud}
\def\XXint#1#2#3{{\setbox0=\hbox{$#1{#2#3}{\int}$ }
\vcenter{\hbox{$#2#3$ }}\kern-.6\wd0}}
\author{Benjamin Fehrman}
\address{Mathematical Institute, University of Oxford, OX2 6GG Oxford, United Kingdom}
\email{Benjamin.Fehrman@maths.ox.ac.uk}
\author{Benjamin Gess}
\address{Max Planck Institute for Mathematics in the Sciences, 04103 Leipzig, Germany \newline \indent Fakult\"at f\"ur Mathematik, Universit\"at Bielefeld, 33615 Bielefeld, Germany}
\email{Benjamin.Gess@mis.mpg.de}
\subjclass[2010]{35Q84, 60F10, 60H15, 60K35, 82B21 (primary); 35D30, 37H05, 60L50, 60H17, 82B31 (secondary)}
\keywords{interacting particle system, large deviations, stochastic PDE}
\date{\today}
\begin{document}

\title{Non-equilibrium large deviations and parabolic-hyperbolic PDE with irregular drift}
\begin{abstract}
Large deviations of conservative interacting particle systems, such as the zero range process, about their hydrodynamic limit and their respective rate functions lead to the analysis of the skeleton equation; a degenerate parabolic-hyperbolic PDE with irregular drift.  We develop a robust well-posedness theory for such PDEs in energy-critical spaces based on concepts of renormalized solutions and the equation's kinetic form.  We establish these properties by proving that renormalized solutions are equivalent to classical weak solutions, extending concepts of [DiPerna, Lions; Ann.\ Math., 1989], [Ambrosio; Invent.\ Math., 2004] to the nonlinear setting.

The relevance of the results toward large deviations in interacting particle systems is demonstrated by applications to the identification of l.s.c.\ envelopes of restricted rate functions, to zero noise large deviations for conservative (singular) SPDE, and to the $\Gamma$-convergence of rate functions. The first of these solves a long-standing open problem in the large deviations for zero range processes. The second makes rigorous an informal link between the non-equilibrium statistical mechanics approaches of macroscopic fluctuation theory and fluctuating hydrodynamics.
\end{abstract}

\maketitle

\markright{LARGE DEVIATIONS FOR CONSERVATIVE SPDE}

\section{Introduction}

Large deviations of conservative interacting particle systems, such as the zero range process, about their hydrodynamic limit are described by their rate function
\begin{equation}\label{eq:intro-rate-function}
I(\rho) =\frac{1}{2}\inf\left\{\norm{g}^2_{L^2(\TT^d\times[0,T];\R^d)}\colon \partial_{t}\rho=\Delta\Phi(\rho)-\nabla\cdot (\Phi^{\frac{1}{2}}(\rho)g)\right\} \end{equation}
where  $\Phi$ is a monotone nonlinearity. This motivates the analysis of the corresponding skeleton equation, a parabolic-hyperbolic PDE
\begin{equation}
\begin{aligned}\partial_t\rho & =\D\Phi(\rho)-\nabla\cdot(\Phi^\frac{1}{2}(\rho)g) &  & \textrm{in}\;\;\mathbb{T}^d\times(0,T),\end{aligned}
\label{eq:intro-skeleton}
\end{equation}
with irregular drift $g\in L^2(\TT^d\times (0,T);\R^d)$, posed on the $d$-dimensional torus $\mathbb{T}^d$. A primary result of this work is the well-posedness and stability of \eqref{eq:intro-skeleton}.

These results make important contributions to the understanding of large deviations in non-equilibrium conservative fluctuating systems, including applications to the large deviations of the zero range process (Theorem \ref{thm:envelope} below), to large deviations of conservative SPDE (Theorem \ref{prop_collapse_3} below), which rigorously establishes the link between the non-equilibrium statistical mechanics theories of fluctuating hydrodynamics and macroscopic fluctuation theory, and to the $\Gamma$-convergence of the large deviations rate functions (Theorem \ref{thm:gamma-convergence} below).

The results obtained here extend to the nonlinear setting concepts of renormalized solutions, introduced by DiPerna and Lions \cite{DPL89}, Ambrosio \cite{A04-1}, Le Bris and Lions \cite{LBL08}, and combine these with the concept of kinetic solutions introduced by Lions, Perthame, and Tadmor \cite{LPT94} and Chen and Perthame \cite{CP03}.

To appreciate the difficulty in proving well-posedness and stability for \eqref{eq:intro-skeleton} let us consider the model case given by the porous media equation $\Phi(\rho)=\rho^m$, for some $m\in[1,\infty)$. Scaling arguments (see Section~\ref{energy_critical} below) demonstrate that \eqref{eq:intro-skeleton} is critical for controls $g\in L^q([0,T];L^p(\TT^d;\R^d))$ for $q=p=2$ and initial condition in $L^1(\TT^d)$, and is supercritical for initial condition in $L^r(\TT^d)$, for every $r\in(1,\infty)$. Hence, even in the case of independent particles $\ensuremath{\Phi(\rho)=\rho}$, the skeleton equation \eqref{eq:intro-skeleton} with $g\in L^2(\TT^d\times[0,T];\R^d)$ is not of semilinear nature, since also on small scales the diffusive operator does not dominate the convective, irregular term. Consequently, the well-posedness and stability of solutions to \eqref{eq:intro-skeleton} are challenging problems.

In this paper, we obtain a new a-priori estimate, which leads to the existence of weak solutions (see Definition~\ref{classical_weak} below) to \eqref{eq:intro-skeleton} and optimal regularity estimates. In contrast to linear Fokker-Planck equations (see, for example, Le Bris and Lions \cite{LBL04,LBL08}), the supercriticality of \eqref{eq:intro-skeleton} in $L^r(\TT^d)$ for $r\in(1,\infty)$ makes it impossible to exploit $L^r(\TT^d)$-based estimates. Our estimates are based on entropy-entropy dissipation, which require the nonnegativity of the initial data and yield that, for some $c\in(0,\infty)$,
\begin{equation}\label{intro:apriori} \sup_{t\in[0,T]}\int_{\TT^d}\Psi_\Phi(\rho)\dx+\int_0^T\int_{\TT^d}\abs{\nabla  \Phi^\frac{1}{2}(\rho)}^2\dx\dt
 \leq \int_{\TT^d}\Psi_\Phi(\rho_0)+c\int_0^T\int_{\TT^d}\abs{g(x,t)}^2\dx\dt,
\end{equation}
with $\Psi_\Phi'(\xi)=\log(\Phi(\xi))$.  We therefore define the energy space for the initial data to be
\begin{equation}\label{intro_energy_space}\Ent_\Phi(\TT^d)=\left\{\rho_0\in L^1(\TT^d)\colon \rho_0\geq 0\;\;\textrm{and}\;\;\int_{\TT^d}\Psi_\Phi(\rho_0(x))\dx<\infty\right\}.\end{equation}
The proof of uniqueness is significantly more complicated due to the lower integrability of the solution provided by \eqref{intro:apriori}. For this reason, previous techniques such as Otto \cite{O96} do not apply.  Indeed, the arguments of \cite{O96} were based on $L^r(\TT^d)$ estimates, for $r\in(1,\infty)$. 

We instead introduce the concept of a renormalized kinetic solution (see Definition~\ref{skel_sol_def} below) to recover the uniqueness, for which it is necessary to deal with two new difficulties.  First, it is not obvious that weak solutions are renormalized kinetic solutions, a difficulty that should be expected from the linear case (see, for example, DiPerna and Lions \cite{DPL89}, Ambrosio \cite{A04-1}).  In comparison to the linear case, additional commutator errors appear due to commuting convolutions and nonlinearities in \eqref{eq:intro-skeleton}, and the commutator estimates have to be based on the regularity $\Phi^\frac{1}{2}(\rho)$ implied by \eqref{intro:apriori} rather than on regularity of $\rho$ itself.  Second, even on the renormalized level, standard techniques for uniqueness (see, for example, \cite{ChenPerthame}) do not apply, since the estimate \eqref{intro:apriori} does not imply the decay of entropy and parabolic defect measures at infinity.

The three main theorems below are obtained under general assumptions that apply, in particular, to the porous media nonlinearity $\Phi(\xi)=\xi^m$ for every $m\in[1,\infty)$.  The first main result is the well-posedness and stability of the skeleton equation.

\begin{thm*}[Theorem \ref{thm_unique}, Theorem \ref{equiv}, Proposition \ref{gen_exist}, Proposition \ref{weak_strong} below] Let $\Phi\in\C([0,\infty))\cap\C^1_{\textrm{loc}}((0,\infty))$ satisfy Assumptions~\ref{as_unique},  \ref{as_equiv}, and \ref{as_compact} below. Then,
\begin{enumerate}[(1)]
\item For every $g\in L^{2}(\TT^d\times [0,T];\R^d)$ and $\rho_{0} \in\Ent_\Phi(\mathbb{T}^d)$ there exists a unique weak solution to \eqref{eq:intro-skeleton}. Moreover, if $\rho^{1},\rho^{2}$ are weak solutions of \eqref{eq:intro-skeleton} with initial data $\rho_{0}^{1},\rho_{0}^{2}\in\Ent_\Phi(\TT^d)$ and the same control $g\in L^2(\TT^d\times(0,T);\R^d)$,
\[
\norm{\rho^{1}-\rho^{2}}_{L^{\infty}([0,T];L^{1}(\mathbb{T}^{d}))}\leq\norm{\rho_{0}^{1}-\rho_{0}^{2}}_{L^{1}(\mathbb{T}^{d})}.
\]
\item Let $\rho^n_0,\rho_0\in\Ent_\Phi(\TT^d)$ and $g_n,g\in L^{2}([0,T];L^2(\TT^{d};\R^d))$ for every $n\in\N$ with $\rho^n_0\rightharpoonup \rho_0$ and $g_{n} \rightharpoonup g$ weakly in $L^1(\TT^d)$ and $L^{2}([0,T];L^2(\TT^{d};\R^d)$ respectively.  Let $\rho_{n},\rho \in L^{1}([0,T];L^{1}(\TT^{d}))$ solve \eqref{eq:intro-skeleton} with controls $g_{n},g$ and initial datum $\rho^n_0,\rho_0$ respectively. Then, 
\[
\rho_{n}\to\rho\;\;\textrm{strongly in}\;\;L^{1}([0,T];L^{1}(\TT^d)).
\]
\end{enumerate}
\end{thm*}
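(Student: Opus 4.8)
The plan is to establish the four assertions—uniqueness, contraction, existence, and weak-strong stability—in a logical order that mirrors the structure flagged in the introduction, and the technical heart will be the passage from weak solutions to renormalized kinetic solutions and the uniqueness argument at the renormalized level.

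\textbf{Step 1: A-priori estimates and the energy space.} First I would derive rigorously the entropy--entropy-dissipation estimate \eqref{intro:apriori} for sufficiently regular solutions. Testing the equation $\partial_t\rho=\Delta\Phi(\rho)-\nabla\cdot(\Phi^{1/2}(\rho)g)$ against $\log\Phi(\rho)$ (the derivative of $\Psi_\Phi$), the diffusion term produces $\int|\nabla\Phi^{1/2}(\rho)|^2$ up to the chain-rule identity $\nabla\Phi(\rho)\cdot\nabla\log\Phi(\rho) = |\nabla\Phi^{1/2}(\rho)|^2 \cdot 4$ (modulo constants depending on $\Phi$ through Assumption~\ref{ass_log}), while the convective term is handled by Young's inequality: $\int\Phi^{1/2}(\rho)g\cdot\nabla\log\Phi(\rho) \lesssim \tfrac12\int|\nabla\Phi^{1/2}(\rho)|^2 + c\int|g|^2$, using $\Phi^{1/2}(\rho)\nabla\log\Phi(\rho) = 2\nabla\Phi^{1/2}(\rho)$. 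This identifies $\Ent_\Phi(\TT^d)$ as the natural class for initial data and gives the uniform bound $\rho\in L^\infty([0,T];L^1)\cap\{\nabla\Phi^{1/2}(\rho)\in L^2\}$ that all later steps use.

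\textbf{Step 2: Existence of weak solutions.} I would regularize: replace $\Phi$ by a smooth, strictly increasing, non-degenerate $\Phi^\eta$ (invoking Lemma~\ref{ass_phi_reg}) and $g$ by a smooth $g_\delta$, solve the resulting nondegenerate quasilinear parabolic equation by standard theory, and pass to the limit. The a-priori estimate of Step~1 is stable under this approximation and yields, via the $L^1$-bound plus the gradient bound on $\Phi^{1/2}(\rho)$ and an Aubin--Lions--Simon compactness argument (using the equation for the time-regularity of $\Phi(\rho)$ or of a truncation), strong $L^1$-convergence of a subsequence, enough to pass to the limit in the weak formulation. Assumption~\ref{ass_interpolate} should supply the interpolation needed to identify the limit of the nonlinear terms.

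\textbf{Step 3: Weak solutions are renormalized kinetic solutions, then uniqueness and contraction.} This is the main obstacle. Following DiPerna--Lions/Ambrosio in spirit, one convolves the kinetic form of the equation with a spatial mollifier and must control the commutator between the convolution and the nonlinear drift $\Phi^{1/2}(\rho)g$; crucially, as the authors emphasize, the commutator estimate cannot use regularity of $\rho$ but only the $H^1$-regularity of $\Phi^{1/2}(\rho)$ from \eqref{intro:apriori}, together with the optimal regularity estimates developed earlier in the paper. Once weak solutions are known to be renormalized kinetic solutions, uniqueness and the $L^1$-contraction follow by the kinetic doubling-of-variables technique of Chen--Perthame—but here one must handle the failure of entropy defect measures to decay at infinity (difficulty (2) in the introduction), which I expect requires a careful truncation in the velocity variable with error terms controlled again by the entropy dissipation $\int|\nabla\Phi^{1/2}(\rho)|^2$ rather than by higher integrability of $\rho$. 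The contraction estimate $\norm{\rho^1-\rho^2}_{L^\infty_tL^1_x}\leq\norm{\rho^1_0-\rho^2_0}_{L^1_x}$ drops out of the doubling argument with the same control $g$, since the drift difference vanishes to the right order in the kinetic comparison.

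\textbf{Step 4: Weak-strong stability in $g$.} Given $g_n\rightharpoonup g$ weakly in $L^2$, the a-priori estimate \eqref{intro:apriori} is uniform in $n$ (the right side is bounded since weakly convergent sequences are bounded), so $\{\rho_n\}$ lies in a fixed compact-in-$L^1([0,T];L^1(\TT^d))$ set by the same Aubin--Lions argument as in Step~2; extract a limit point $\bar\rho$, pass to the limit in the weak formulation—the only delicate term being $\Phi^{1/2}(\rho_n)g_n$, where strong $L^1$ (hence, with the gradient bound, strong convergence of $\Phi^{1/2}(\rho_n)$ in a suitable space) against weak $L^2$ convergence of $g_n$ suffices—to conclude $\bar\rho$ is a weak solution with control $g$ and datum $\rho_0$. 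By uniqueness (part~1), $\bar\rho=\rho$, so the whole sequence converges. This step is essentially a corollary of Steps~1--3.
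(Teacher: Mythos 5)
Your plan follows essentially the same route as the paper: the entropy/entropy-dissipation estimate obtained by testing with $\log\Phi(\rho)$ identifies $\Ent_\Phi(\TT^d)$ and the $H^1$-bound on $\Phi^{1/2}(\rho)$ (Section~\ref{sec_a_priori}, Proposition~\ref{frac_est}); existence goes through regularization of $\Phi$ plus added viscosity and Aubin--Lions--Simon compactness (Propositions~\ref{smooth_exist} and \ref{gen_exist}); uniqueness is proved at the renormalized kinetic level by doubling of variables with a velocity cutoff after showing weak solutions are renormalized via commutator estimates that use only the regularity of $\Phi^{1/2}(\rho)$ (Theorems~\ref{thm_unique} and \ref{equiv}); and weak-strong stability is exactly the compactness-plus-uniqueness argument of Proposition~\ref{weak_strong}. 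The one point where your outline is slightly optimistic is the removal of the velocity cutoff: the resulting error terms carry a factor of $M$ against integrals over the level sets $\{M\leq\rho\leq M+1\}$, so they are not killed by dominated convergence against the entropy dissipation alone, and the paper needs the extra observation (Lemma~\ref{lem_partition}) that $\liminf_{M\to\infty}M\int_{\{M\leq\rho\leq M+1\}}f=0$ for $f\in L^1$ because the level sets are disjoint.
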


Having established the well-posedness and stability of the skeleton equation, we next describe the consequences of our results for the non-equilibrium large deviations of interacting particle systems.

The hydrodynamic limit of the empirical density field $\mu^n$ of the zero range process on the torus with mean local jump rate $\Phi$ is the solution to the nonlinear diffusion equation
\[\partial_t\rho = \Delta\Phi(\rho)\;\;\textrm{on}\;\;\TT^d\times(0,T),\]
see, for example, Ferrari, Presutti, and Vares \cite{FPV83}, Kipnis and Landim \cite{KL99}. The particle system will exhibit large fluctuations about this limit which, though infrequent, can have catastrophic effects, such as earthquakes or mechanical failure (see, for example,  \cite{ERVE07,GVE18,VEW12}). It is thus important to understand and simulate such rare events. On an exponential scale, the (im)probability of a large fluctuation is described in terms of a large deviations principle with rate function $I$. 

The general approach to large deviations in interacting particle systems, introduced in the seminal works \cite{DV89,KOV89}, relies on two ingredients to be verified in a case by case manner: First, the so-called superexponential estimates, second, the identification of the lower semicontinuous (l.s.c.) envelope of the rate function restricted to smooth and strictly positive fluctuations, see \cite[Chapter 10]{KL99}. This second ingredient is required, since, in a first step, the large deviations lower bound can be established only for nice enough fluctuations.  More precisely, one first restricts to the space  $\mcS$ of positive fluctuations $\rho$ so that 
\[\partial_{t}\rho=\D\Phi(\rho)-\nabla\cdot(\Phi(\rho)\nabla H),\]
for some strictly positive $\rho_0\in C^\infty(\TT^d)$ and $H\in C^{3,1}(\TT^d\times[0,T])$. The extension to a full large deviations result requires the identification of the l.s.c.\ envelope of the rate function \eqref{eq:intro-rate-function} restricted to $\mcS$. For the symmetric simple exclusion process, this envelope can be identified, due to the convexity of the rate function \cite[Chapter 10, Lemma 5.5]{KL99}; a property not available for the zero range process. For this reason, for the zero range process, so far, only a restricted large deviations estimate is known, in the sense that the lower bound is known only for nice enough fluctuations, see  \cite[Theorem 1, and the discussion on p.\ 66]{BKL95}. The identification of the l.s.c.\ envelope has remained an open problem for over twenty years since then, and is resolved in the present work on the torus.

\begin{thm*} [Theorem~\ref{thm:envelope} below]  Let $\Phi\in\C([0,\infty))\cap\C^1_{\textrm{loc}}((0,\infty))$ satisfy Assumptions~\ref{as_unique},  \ref{as_equiv}, and \ref{as_compact} below.  Then, the rate function \eqref{eq:intro-rate-function} is equal to the lower semicontinuous envelope of its restriction to the space $\mcS$.

\end{thm*}

Since the assumptions on $\Phi$ in this theorem include the case of degenerate $\Phi'$, e.g.\ $\Phi(\rho)=\rho^m$ for every $m\in[1,\infty)$, its applicability goes much beyond the fluctuations of the zero range process. We next demonstrate this by its application to the large deviations for degenerate stochastic PDEs with conservative noise, and their relation the fluctuations of interacting particle systems.

Macroscopic fluctuation theory (MFT) introduces a general framework for non-equilibrium diffusive systems (see, for example, Bertini, De Sole, Gabrielli, Jona Lasinio, and Landim \cite{BDSGJLL15}, Derrida \cite{D07}), thus extending Onsager-like near to equilibrium theories for non-equilibrium thermodynamics. MFT, which is based on a constitutive formula for large fluctuations around thermodynamic variables, like density and current, can be justified by fluctuating hydrodynamics (see, for example, Hohenberg, and Halperin \cite{HH77}, Landau, and Lifshitz \cite{LL87}, Spohn \cite{S91}, Bouchet, Gaw\c edzki, and Nardini \cite{BGN16}). The latter postulates a conservative, singular stochastic PDE describing fluctuations in systems out of equilibrium. The fundamental ansatz of MFT can then be obtained, informally, from fluctuating hydrodynamics as the zero noise large deviations principle for this stochastic PDE. In addition to this conceptual relevance, the relation between zero noise large deviations for conservative stochastic PDE and MFT may serve as the basis for the development of importance sampling techniques to numerically simulate rare events in systems far from equilibrium (see, for example, E, Ren, and Vanden-Eijnden \cite{ERVE07}, Grafke and Vanden-Eijnden \cite{GVE18}, Vanden-Eijnden and Weare \cite{VEW12}).  We provide a rigorous link between the zero-noise large deviations of conservative stochastic PDE and the large deviations rate functions appearing in interacting particle systems, including zero range processes, the latter being an example for MFT.

The rate function \eqref{eq:intro-rate-function} is informally connected to the small-noise large deviations ($\ve\to0,K\to\infty$) of the stochastic PDE
\begin{equation}
\partial_{t}\rho=\Delta\Phi(\rho)-\sqrt{\ve}\nabla\cdot(\Phi^{\frac{1}{2}}(\rho)\xi^K)\quad\textrm{in}\;\;\TT^d\times(0,\infty),\label{intro_eq-1}
\end{equation} 
where $\xi^K$ is an $\R^d$-valued  spatially correlated noise, converging to space-time white noise for $K\to \infty$, see, for example, Dirr, Stamatakis, and Zimmer \cite{DSZ16} and Giacomin, Lebowitz, and Presutti \cite[Section 4]{GLP99}. In particular, in the ultra-violet limit $K\to\infty$, this observation links the case of independently diffusing particles to the Dean-Kawasaki equation
\begin{equation}
\partial_{t}\rho=\Delta\rho-\sqrt{\ve}\nabla\cdot (\rho^{\frac{1}{2}}\xi)\quad\textrm{in}\;\;\TT^d\times(0,\infty),\label{eq:introDK}
\end{equation}
which has been considered, for example, by Dean \cite{D96}, Kawasaki \cite{K94}, Donev, Fai, and Vanden-Eijnden \cite{DFVE14}, Donev, and Vanden-Eijnden \cite{DVE14}, Konarovskyi and von Renesse \cite{KR15,KR17}, and Lehmann, Konarovskyi, and von Renesse \cite{LKR19}.  

This relationship, however, had to remain informal because \eqref{intro_eq-1} has only recently been shown to be well-posedness by the authors in \cite{FehGes21}. In addition, the ultra-violet limit $K\to\infty$ of \eqref{intro_eq-1} is supercritical in the language of singular stochastic PDE and regularity structures (see, for example, Hairer \cite{H13-3}, Gubinelli, Imkeller, and Perkowski \cite{GIP12}), and therefore falls outside the scope of the theory. For a discussion of this issue and the appearance of ultraviolet divergences in fluctuating hydrodynamics see \cite[p.\ 595]{BDSGJLL15}. Furthermore, it has been observed in \cite{LKR19} that a renormalization is necessary in order to obtain function-valued solutions of \eqref{eq:introDK}.  These renormalization terms destroy the relationship between the stochastic PDE and the zero range process, since they appear in the rate function for \eqref{eq:introDK} and thereby lead to incorrect predictions of rare events for the particle process. We refer to Hairer and Weber \cite{HW15} for a discussion of related aspects in the context of the (singular) stochastic quantization equation.

To address the issue of renormalization, the main result of this work identifies a joint scaling regime $\ve<<\nicefrac{1}{K(\ve)}$ such that possible renormalization constants vanish, and such that the solutions $\rho^{\ve,K(\ve)}$ constructed in \cite{FehGes21} correctly simulate the rare events in the particle system by satisfying a large deviations principle with rate function \eqref{eq:intro-rate-function}.

\begin{thm*}[Theorem~\ref{prop_collapse_3} below]  Let $\Phi\in\C([0,\infty))\cap\C^1_{\textrm{loc}}((0,\infty))$ satisfy Assumptions~\ref{as_unique},  \ref{as_equiv}, and \ref{as_compact} below.  Then, in the $\ve\rightarrow 0$ scaling regime
\[\ve K(\ve)^{d+2}\rightarrow 0\;\;\textrm{and}\;\;K(\ve)\rightarrow\infty,\]
the $\{\rho^{\ve,K(\ve)}\}_{\ve\in(0,1)}$ solving \eqref{intro_eq-1} satisfy a large deviations principle with rate function \eqref{eq:intro-rate-function} uniformly with respect to weakly $L^1(\TT^d)$-compact subsets of $\Ent_\Phi(\TT^d)$.
\end{thm*}

A detailed comparison to known results on large deviations for conservative stochastic PDE is given in Section \ref{sec:literature} below.

A simplified version of these techniques prove that, for every fixed $K\in\N$, the solutions of \eqref{intro_eq-1} satisfy a small-noise large deviations principle with rate function
\begin{equation}
I^K(\rho)=\frac{1}{2}\inf\left\{\norm{g}^2_{L^2(\TT^d\times[0,T];\R^d)}\colon \partial_{t}\rho=\Delta\Phi(\rho)-\nabla\cdot (\Phi^{\frac{1}{2}}(\rho)P_Kg)\right\},\label{finite_rate_function-1-1}
\end{equation}
for $P_Kg\in L^2([0,T];L^2(\TT^d;\R^d))$ the Fourier projection of $g\in L^2([0,T];L^2(\TT^d;\R^d))$ onto the Fourier modes of the noise $\xi^K$.  The final result of this work is the $\Gamma$-convergence of these rate functions, as $K\rightarrow\infty$, to the rate function \eqref{eq:intro-rate-function}.

\begin{thm*} [Theorem~\ref{thm:gamma-convergence} below]  Let $\Phi\in\C([0,\infty))\cap\C^1_{\textrm{loc}}((0,\infty))$ satisfy Assumptions~\ref{as_unique},  \ref{as_equiv}, and \ref{as_compact} below.  Then, as $K\rightarrow\infty$,
\[
I^K\overset{\Gamma}{\longrightarrow}I,
\]
in the sense of $\Gamma$-convergence.  \end{thm*}

\subsection{Comments on the literature}\label{sec:literature}\ \\ 

\textit{{Fluctuations of the zero range process}} have been analyzed, for example, by Benois, Kipnis, and Landim in \cite{BKL95}. We also refer to Kipnis and Landim \cite{KL99}, Evans and Hanney \cite{EH05} and the references therein for a detailed account of the theory. Equilibrium large deviations for the  zero-range process with long jumps and reservoirs have been analyzed by Bernardin, Gon\c{c}alves, Oviedo-Jim\'enez, and Scotta \cite{BGOS22}. For large deviations results in mean-field interacting particle systems we refer to the works by Barr\'e, Bernardin, Ch\'etrite, Chopra, and Mariani \cite{Bar2019}, Dawson and G\"artner \cite{DG89,DG87}, and Gvalani and Schlichting \cite{GS20}.

\textit{{Large deviations for conservative stochastic PDE}} have previously been considered in the works \cite{M10} by Mariani and \cite{BBMN10} by 
Bellettini, Bertini, Mariani, and Novaga. We here compare the methods and results when applied in the framework of conservative SPDE
\[
\partial_{t}\rho=\Delta\Phi(\rho)-\sqrt{\ve}\nabla\cdot(\s(\rho)\xi^{K}).
\]
We emphasize, however, that the scope of the works \cite{M10,BBMN10} goes beyond this by including the case of asymptotically vanishing dissipation, which heuristically corresponds to asymmetric simple exclusion processes. In this sense, the present work extends the first order large deviations obtained in \cite{M10,BBMN10} in two directions:  First, the rate functions are identified on the space of function-valued solutions. Second, this is achieved under significantly more general conditions on the coefficients, which include nonlinear dissipation $\Phi$ and degenerate noise $\s$.  Since we allow for unbounded initial data and $\s$, the methods developed here handle possible concentration and vacuum effects. Heuristically, this corresponds to potentially degenerate zero range processes, as compared to regularized exclusion processes in \cite{M10,BBMN10}.

In \cite{M10,BBMN10} the rate function could be identified only after passing to a measure-valued formulation for which the skeleton equation is linear, and the corresponding rate function is convex.  This significantly simplifies the identification, for example, since the linear structure is nicely compatible with mollification---a fact which is not true in the nonlinear setting, and the control of the resulting nonlinear commutators is a key technical step in the present work.  The linearization also comes at the cost of working on much larger spaces, and on the level of function-valued large deviations no explicit representation of the rate function has been obtained, compare \cite[Corollary 1]{M10}.  This issue is fully resolved in the present work. We present an approach that entirely avoids passing to measure-valued solutions, and therefore establishes an LDP with an explicit rate function for the function-valued solutions that is consistent with the LDP for the particle system.

The second main generalization of \cite{M10,BBMN10} lies in the assumptions on the coefficients. In \cite{M10} it is assumed that $\sigma$ is $\C^2$-smooth with $\sigma(0)=\sigma(1)=0$ and that $0\leq \rho_{0}\leq 1$.  As a consequence, solutions $\rho$ also take values in the interval $[0,1]$, which rules out concentration and vacuum phenomena. As demonstrated in \cite{DFG20}, this significantly simplifies the analysis of the large deviations, since the solutions are bounded and satisfy standard $L^{p}$-based regularity estimates. In addition, simple exclusion processes and zero range processes heuristically correspond to singular diffusion coefficients $\s(\rho)=\sqrt{\rho(1-\rho)}$ and $\s(\rho)=\sqrt{\rho}$ respectively, thus violating the $C^2$ assumption in \cite{M10,BBMN10}. The results of the present work include also these singular cases.

\textit{{Stochastic PDE with conservative noise}} have been considered in the framework of stochastic scalar conservation laws by Lions, Perthame, and Souganidis \cite{LPS13-2,LPS13,LPS14}, Friz and Gess \cite{FG16}, Gess and Souganidis \cite{GS14,GS17}.  Most recently, based on the equation's kinetic formulation,  the authors \cite{FehGes21} treated a general class of parabolic-hyperbolic stochastic PDE with nonlinear conservative and multiplicative noise, including the Dean-Kawasaki and nonlinear Dawson-Watanabe equations.  Earlier works studying stochastic PDEs with conservative noise include Gess and Souganidis \cite{GS16-2}, Fehrman and Gess \cite{FG17} and Dareiotis and Gess \cite{DG18}. Approaches to their numerical treatment have been developed by Gess, Perthame, and Souganidis \cite{GPS16}, and Hoel, Karlsen, Risebro, and Storrosten \cite{HKRS17,HKRS18}. Related results on the level of stochastic Hamilton-Jacobi equations, based on entirely different methods, have been developed by Lions and Souganidis \cite{LS98-2,LS98,LS00,LS00-2} (see Souganidis \cite{S18} for a recent account on the theory), with extensions by Friz, Gassiat, Lions, and Souganidis \cite{FGLS16}.  A detailed analysis of fine properties of solutions has been given by Gassiat and Gess \cite{GG19}, Gassiat, Gess, Lions, and Souganidis \cite{GGLS18}, and Lions and Souganidis \cite{LS19}.

\textit{{Large deviation estimates for singular stochastic PDE}} have been derived by Faris and Jona-Lasinio \cite{FJL82}, Jona-Lasinio and Mitter \cite{JLM90}, Cerrai and Freidlin \cite{CF11}, and Hairer and Weber \cite{HW15} in the context of stochastic Allen-Cahn equations. In particular, we emphasize that in \cite{HW15} it is observed that renormalization constants may enter the rate function in the setting of singular stochastic PDE. The results treated in these works are quite different from the present paper, since, due to the additive noise structure of the stochastic Allen-Cahn equation, the treatment of the corresponding skeleton equation does not pose a major difficulty.   

\textit{{The weak convergence approach to large deviation principles}} for stochastic PDE goes back to Budhiraja, Dupuis and Maroulas \cite{BudDupMar2008}, and has been used to derive large deviation estimates for singular stochastic PDE by Cerrai and Debussche \cite{CD19}. Further applications to stochastic PDE with multiplicative noise, depending only on the values of the solution, have been given by Brze\'zniak, Goldys, Jegaraj \cite{BGJ17} and Dong, Wu, Zhang, and Zhang \cite{DWZZ20}.

\textit{{The Dean-Kawasaki equation}} has been introduced in  Dean \cite{D96}, Kawasaki \cite{K94}, and analyzed in Donev, Fai, and Vanden-Eijnden \cite{DFVE14}, Donev and Vanden-Eijnden \cite{DVE14}, Konarovskyi and von Renesse \cite{KR15,KR17}, Lehmann, Konarovskyi, and von Renesse \cite{LKR19} and in the references therein. For an informal treatment of the link of macroscopic fluctuation theory (MFT) to fluctuating hydrodynamics in the context of the Dean-Kawasaki equation see Bouchet, Gaw\c edzki and Nardini \cite{BGN16}. The general MFT has received considerable attention, with a comprehensive overview given by Bertini, De Sole, Gabrielli, Jona-Lasinio, and Landim in \cite{BDSGJLL15}.

\textit{{Linear Fokker-Planck equations with irregular coefficients}}, their well-posedness and stability of solutions have received considerable attention. We refer to Bogachev, Krylov, R\"ockner, and Shaposhnikov \cite{BKRS15} for a detailed account of the literature and to Le Bris and Lions \cite{LBL04,LBL08}, Ambrosio \cite{A04-1}, Boccardo, Orsina, and Porretta \cite{BOP03}, and Porretta \cite{P15} for approaches relying on renormalization.

\subsection{Overview}  Extended lecture notes have been posted online by the authors \cite{FehGes19}.  So, while we emphasize that this version of the paper contains full details, extended remarks and more detailed computations can be found online.  In particular, we prove in \cite[Appendix~A]{FehGes19} that every assumption is satisfied by the model example $\Phi(\xi)=\xi^m$, for every $m\in[1,\infty)$, and therefore that every result in this paper applies to the case of the Laplacian and to every porous media nonlinearity.

We introduce the assumptions on the nonlinearity $\Phi$ at the beginning of each section.  In Section~\ref{skeleton}, we present an informal analysis of the skeleton equation broken down as follows.  In Section~\ref{energy_critical}, we argue by scaling that the skeleton equation \eqref{eq:intro-skeleton} is energy critical for $L^1(\TT^d)$ and is energy supercritical for $L^p(\TT^d)$, if $p\in(1,\infty)$.  We obtain formal a-priori estimates for the solution of \eqref{eq:intro-skeleton} in Section~\ref{sec_a_priori} and thereby identify the correct energy space \eqref{intro_energy_space} for the initial data.  Based on these estimates, in Section~\ref{renormalized_sol} we define a \emph{renormalized kinetic solution} (see Definition~\ref{skel_sol_def}).

In Section~\ref{sec_unique}, we prove the uniqueness of renormalized kinetic solutions (see Theorem~\ref{thm_unique}).  In Section~\ref{sec_equiv}, we prove in Theorem~\ref{equiv} the equivalence of renormalized kinetic solutions and weak solutions (see Definition~\ref{classical_weak}) to the skeleton equation.  In Section~\ref{sec_existence}, we prove the existence of renormalized kinetic solutions (see Proposition~\ref{gen_exist}), and obtain in Proposition~\ref{weak_strong} the strong continuity of the solutions with respect to weak convergence of the controls.  In Section~\ref{sec_uniform_ldp}, we prove the uniform large deviations principle, which relies on the weak approach to large deviations developed in \cite{BudDupMar2008}.  In Section~\ref{sec_gamma}, in Theorem~\ref{thm:gamma-convergence}, we prove the $\Gamma$-convergence of the large deviations rate functions \eqref{finite_rate_function-1-1} to the rate function \eqref{eq:intro-rate-function}.  In Section~\ref{sec_lsc_envelope}, we characterize the l.s.c.\ envelope of the restricted rate function.

\section{The skeleton equation}\label{skeleton}

The equation defining the large deviations rate function of the zero range process \eqref{eq:intro-rate-function} is the so-called \emph{skeleton equation}.  In Sections~\ref{sec_unique}, \ref{sec_equiv}, and \ref{sec_existence} we will prove the existence and uniqueness of renormalized kinetic solutions (see Definition~\ref{skel_sol_def} below), for $\rho_0\in\Ent_\Phi(\TT^d)$ and for $g\in L^2(\TT^d\times[0,T];\R^d)$, to the equation
\begin{equation}\label{skel_eq}\partial_t \rho = \Delta\Phi(\rho)-\nabla\cdot\left(\Phi^\frac{1}{2}(\rho)g\right)\;\; \textrm{in}\;\;\TT^d\times(0,T)\;\;\textrm{with}\;\;\rho(\cdot,0)=\rho_0.
\end{equation}
We first argue formally in Section~\ref{energy_critical} below that equation \eqref{skel_eq} is energy critical for $L^1(\TT^d)$ and energy supercritical for $L^p(\TT^d)$, if $p\in(1,\infty)$.   This argument suggests that no standard $L^p$-theory can be applied, and indeed in Section~\ref{sec_a_priori} below we derive an energy estimate for solutions with initial data in $\Ent_\Phi(\TT^d)$ defined in \eqref{intro_energy_space}.  This estimate will be the basis for Definition~\ref{skel_sol_def} below, where we present the definition of a renormalized kinetic solution.

We observe in particular that the formal estimates obtained in Section~\ref{sec_a_priori} are significantly weaker than are required to apply standard techniques based on the entropy or kinetic formulation of the equation (see \cite{ChenPerthame}).  This can be seen on the level of the parabolic defect measure (see \eqref{parabolic_defect_measure} below), which is neither globally integrable nor decaying at infinity.  The proof of uniqueness therefore requires new techniques to control errors at infinity, and the proof of existence is based on a compactness argument that requires optimal estimates for the solution.

\subsection{Energy criticality}\label{energy_critical}  In this section, we will argue formally that the skeleton equation \eqref{skel_eq} is energy critical for initial data $\rho_0\in L^1(\TT^d)$ and controls $g\in L^2(\TT^d\times[0,T];\R^d)$.  We will consider the case of the porous media equation $\Phi(\xi)=\xi^m$, for some $m\in[1,\infty)$, set on the whole space $\R^d\times[0,\infty)$.  Precisely, for $m,p,q\in[1,\infty)$, $T\in(0,\infty)$, $g\in L^{p}([0,T];L^{q}(\R^d;\R^d))$, and a nonnegative $\rho_0$, suppose that $\rho$ solves
\[\partial_t \rho = \Delta\left(\rho^m\right)-\nabla\cdot\left(\rho^\frac{m}{2}g\right)\;\; \textrm{in}\;\;\R^d\times(0,T)\;\;\textrm{with}\;\; \rho(\cdot,0)=\rho_0.\]
We will ``zoom in'' in the sense that, for positive real numbers $\lambda,\eta,\tau\rightarrow 0$ we consider the rescaling
\[\tilde{\rho}(x,t)=\lambda \rho(\eta x,\tau t).\]
It follows that $\tilde{\rho}$ solves the equation
\[\partial_t \tilde{\rho}  = \left(\frac{\tau}{\eta^2\lambda^{m-1}}\right)\Delta\left(\tilde{\rho}^m\right)-\nabla\cdot\left(\tilde{\rho}^\frac{m}{2}\tilde{g}\right)\;\; \textrm{in}\;\;\R^d\times(0,T)\;\;\textrm{with}\;\;\rho(\cdot,0) =\lambda \rho_0(\eta\cdot),\]
for $\tilde{g}$ defined by
\begin{equation}\label{critical_1}\tilde{g}(x,t)=\left(\frac{\tau}{\eta\lambda^{\frac{m}{2}-1}}\right)g(\eta x, \tau t).\end{equation}
We are interested in understanding the effect of this scaling on the balance between the parabolic and hyperbolic terms.  We preserve the diffusion by fixing
\begin{equation}\label{critical_2}\left(\frac{\tau}{\eta^2\lambda^{m-1}}\right)=1,\end{equation}
and for $r\in[1,\infty)$ we preserve the $L^r(\R^d)$-norm of the initial data by fixing
\begin{equation}\label{critical_3}\lambda=\eta^\frac{d}{r}.\end{equation}
It follows from \eqref{critical_1}, \eqref{critical_2}, and \eqref{critical_3} that
\begin{align*}
\norm{\tilde{g}}_{L^p([0,T];L^q(\R^d;\R^d))} & =\left(\frac{\tau}{\eta\lambda^{\frac{m}{2}-1}}\right)\left(\int_0^T\left(\int_{\R^d}\abs{g(\eta x, \tau t)}^q\dx\right)^\frac{p}{q}\dt\right)^\frac{1}{p}
\\ & = \eta^{1-\frac{d}{p}+\frac{2}{q}+\frac{d}{r}\left(\frac{m}{2}-\frac{m}{q}+\frac{1}{q}\right)}\norm{g}_{L^p([0,T];L^q(\R^d;\R^d))}.
\end{align*}
To ensure that this norm does not diverge as $\eta\rightarrow 0$, we require that
\[1+\frac{d}{r}\left(\frac{m}{2}+\frac{1}{q}\right)\geq \frac{2}{q}+\frac{d}{p}+\frac{dm}{rq}.\]
If $p=q=2$, we conclude that $\nicefrac{d}{2r}\geq\nicefrac{d}{2}$ and therefore that $r=1$.  Conversely, since the lefthand side of this equality is largest for $r=1$, and since the case $r=1$ yields the inequality
\[1+d\left(\frac{m}{2}+\frac{1}{q}\right)\geq\frac{2}{q}+d\left(\frac{1}{p}+\frac{m}{q}\right),\]
we conclude that $p=q=2$ is critical for $L^1(\TT^d)$ and supercritical for $L^r(\R^d)$, for every $r\in(1,\infty)$.

\subsection{A-priori estimates}\label{sec_a_priori}  In this section, we will motivate the definition of a renormalized kinetic solution of the skeleton equation (see Definition~\ref{skel_sol_def} below).  This definition is the foundation of the existence and uniqueness theory to follow.  We will first derive a formal energy estimate for the solution, and thereby identify the correct energy space for the initial data.

We will restrict attention to nonnegative initial data, which is a necessary assumption for the following estimates to be true (see Remark~\ref{rem_nonnegative} below).  Let $T\in(0,\infty)$, let $\rho_0\colon\TT^d\rightarrow[0,\infty)$ be nonnegative, let $g\in L^2(\TT^d\times[0,T];\R^d)$, and let $\rho$ denote the solution
\[\partial_t \rho = \Delta\Phi(\rho)-\nabla\cdot\left(\Phi^\frac{1}{2}(\rho)g\right)\;\; \textrm{in}\;\;\TT^d\times(0,T)\;\; \textrm{with}\;\;\rho(\cdot,0)=\rho_0.\]
Let $\psi\colon [0,\infty)\rightarrow\R$, and define $\Psi\colon[0,\infty)\rightarrow\R$ to be the antiderivative
\[\Psi(\xi)=\int_0^\xi\psi(\xi')\dxp.\]
We test the equation with the composition $\psi(\rho)$ to obtain
\[\partial_t\int_{\TT^d}\Psi(\rho)\dx+\int_{\TT^d}\Phi'(\rho)\psi'(\rho)\abs{\nabla  \rho}^2\dx=-\int_{\TT^d}\Phi^\frac{1}{2}(\rho)\psi'(\rho)g(x,t)\cdot\nabla \rho\dx.\]
The nonnegativity of $\rho$, H\"older's inequality, and Young's inequality imply that
\begin{align*} & \left.\int_{\TT^d}\Psi(\rho)\dx\right|_{t=0}^{t=T}+\int_0^T\int_{\TT^d}\Phi'(\rho)\psi'(\rho)\abs{\nabla  \rho}^2\dx\dt
 \\ & \leq \frac{1}{2}\int_0^T\int_{\TT^d}\abs{g(x,t)}^2\dx\dt+\frac{1}{2}\int_0^T\int_{\TT^d}\Phi(\rho)\psi'(\rho)^2\abs{\nabla  \rho}^2\dx\dt.
 \end{align*}
To close the estimate we require that
\[\Phi(\xi)\psi'(\xi)^2\leq \Phi'(\xi)\psi'(\xi)\;\;\textrm{and hence that}\;\;\psi'(\xi)\leq \frac{\Phi'(\xi)}{\Phi(\xi)}.\]
We therefore fix $\psi_\Phi(\xi)=\log(\Phi(\xi))$ and define
\[\Psi_\Phi(\xi)=\int_0^\xi\log(\Phi(\xi'))\dxp,\]
to conclude that
\begin{equation}\label{formal_est} \left.\int_{\TT^d}\Psi_\Phi(\rho)\dx\right|_{t=0}^{t=T}+\frac{1}{2}\int_0^T\int_{\TT^d}\frac{\Phi'(\rho)^2}{\Phi(\rho)}\abs{\nabla  \rho}^2\dx\dt \leq \frac{1}{2}\int_0^T\int_{\TT^d}\abs{g(x,t)}^2\dx\dt, \end{equation}
where the estimate follows from the identity
\[2\left(\nabla  \Phi^\frac{1}{2}(\rho)\right)=\frac{\Phi'(\rho)}{\Phi^\frac{1}{2}(\rho)}\nabla  \rho.\]

\begin{remark}\label{rem_nonnegative} (see \cite[Remark~2.1]{FehGes19} for full details) Estimate \eqref{formal_est} is in general false for signed initial data, which can be seen for the heat equation.  Indeed, since $\rho(x,t)=x$ solves the heat equation with linear initial data, and since $x^\frac{1}{2}\notin L^2([0,T];H^1_{\textrm{loc}}(\R))$, after localizing this argument to the torus we conclude that the estimate fails in general for signed initial data.\end{remark}

\begin{remark}(see \cite[Remark~2.2]{FehGes19} for full details) We observe that estimate \eqref{formal_est} is based on the physical entropy of the initial data in the case that $\Phi(\xi)=\xi^m$, for some $m\in(0,\infty)$.  In this case,
\[\Psi_\Phi(\xi)=m\int_0^\xi\log(\xi')\dxp=m\left(\xi\log(\xi)-\xi\right),\]
from which it follows from the preservation of the $L^1$-norm that the physical entropy is a nondecreasing function of time.  \end{remark}

\subsection{Renormalized kinetic solutions}\label{renormalized_sol}  In this section, we will define renormalized kinetic solutions of the skeleton equation \eqref{skel_eq}.  Based on estimate \eqref{formal_est}, we first rewrite the equation in the form
\begin{equation}
\label{re_skel_eq} \partial_t \rho = 2\nabla\cdot \left(\Phi^\frac{1}{2}(\rho)\nabla\Phi^\frac{1}{2}(\rho)\right)-\nabla\cdot\left(\Phi^\frac{1}{2}(\rho)g\right)\;\; \textrm{in}\;\;\TT^d\times(0,T)\;\;\textrm{with}\;\;\rho(\cdot,0) =\rho_0.
\end{equation}
On this level, the criticality of the equation can be seen by analyzing the integrability of the products
\begin{equation}\label{rks_1}\Phi^\frac{1}{2}(\rho)\nabla\Phi^\frac{1}{2}(\rho)\;\;\textrm{and}\;\;\Phi^\frac{1}{2}(\rho)g,\end{equation}
which are formally $L^1(\TT^d\times[0,T]);\R^d)$.  Indeed, even in one dimension, embedding theorems do not readily yield an improvement because they do not improve the integrability in time.

The borderline integrability of the products \eqref{rks_1} and the lack of regularity for the solution make classical techniques untenable and suggest the necessity of a generalized solution theory.  We therefore pass to the equation's kinetic formulation.  Let the kinetic function $\overline{\chi}\colon\R^2\rightarrow\R$ be defined by
\[\overline{\chi}(s,\xi)=\mathbf{1}_{\{0<\xi<s\}}-\mathbf{1}_{\{s<\xi<0\}}.\]
Proceeding formally, suppose that $\rho$ is a solution of \eqref{re_skel_eq}.  The kinetic function $\chi$ of $\rho$ is defined for every $(x,\xi,t)\in\TT^d\times\R\times[0,T]$ by
\[\chi(x,\xi,t)=\overline{\chi}(\rho(x,t),\xi)=\mathbf{1}_{\{0<\xi<\rho(x,t)\}}-\mathbf{1}_{\{\rho(x,t)<\xi<0\}}.\]
The identities
\[\nabla_x\chi(x,\xi,t)=\delta_0(\xi-\rho(x,t))\nabla \rho(x,t)\;\;\textrm{and}\;\;\partial_\xi\chi(x,\xi,t)=\delta_0(\xi)-\delta_0(\xi-\rho(x,t)),\]
show formally that the kinetic function $\chi$ of $\rho$ satisfies the equation
\begin{equation}\label{skel_kin_00}\partial_t \chi =\Phi'(\xi)\Delta_x\chi -\left(\partial_\xi \Phi^\frac{1}{2}(\xi)\right)g(x,t)\nabla_x\chi+ \left(\nabla_xg(x,t)\right)\Phi^\frac{1}{2}(\xi)\partial_\xi \chi+\partial_\xi p\;\; \textrm{in}\;\;\TT^d\times\mathbb{R}\times(0,T),\end{equation}
where $\chi(\cdot,0)= \overline{\chi}(\rho_0)$ and where $p$ is the parabolic defect measure on $\TT^d\times\mathbb{R}\times(0,T)$ defined by
\begin{equation}\label{parabolic_defect_measure}p=\delta_0(\xi-\rho) \frac{4\abs{\Phi(\xi)}}{\Phi'(\xi)}\abs{\nabla \Phi^\frac{1}{2}(\rho)}^2.\end{equation}
We rewrite \eqref{skel_kin_00} in the conservative form
\begin{equation}\label{skel_kin} \partial_t \chi =\Phi'(\xi)\Delta_x\chi -\partial_\xi\left(g(x,t)\Phi^\frac{1}{2}(\xi)\nabla_x\chi\right)+\nabla_x\cdot\left(g(x,t)\Phi^\frac{1}{2}(\xi)\partial_\xi \chi\right)+\partial_\xi p\;\;\textrm{in}\;\;\TT^d\times\mathbb{R}\times(0,T). \end{equation}
Based on the estimates leading to \eqref{formal_est}, we will obtain the well-posedness of \eqref{skel_eq} for initial data in the following space.
\begin{definition}\label{def_initial}  Let $\Phi\in\C([0,\infty))\cap\C^1_{\textrm{loc}}((0,\infty))$ be nonnegative.  Let $\Psi_\Phi\colon[0,\infty)\rightarrow[0,\infty)$ be defined by $\Psi_\Phi(\xi)=\int_0^\xi\log(\Phi(\xi'))\dxp$.
We define
\[\Ent_\Phi(\TT^d)=\left\{\rho_0\in L^1(\TT^d)\colon \rho_0\geq 0\;\;\textrm{a.e.}\;\;\textrm{and}\;\;\int_{\TT^d}\Psi_\Phi(\rho_0(x))\dx<\infty\right\}.\]
\end{definition}
The following definition of a renormalized kinetic solution is based on the kinetic equation \eqref{skel_kin}, Definition~\ref{def_initial}, and the formal estimates obtained in Section~\ref{sec_a_priori}.

\begin{definition}\label{skel_sol_def}  Let $T\in(0,\infty)$, let $\Phi\in \C[0,\infty)\cap \C^1_{\textrm{loc}}((0,\infty))$, and let $\rho_0\in \Ent_\Phi(\TT^d)$.  A nonnegative function $\rho\in L^\infty([0,T],L^1(\mathbb{T}^d))$ is a renormalized kinetic solution of \eqref{skel_kin} with initial data $\rho_0$ if $\rho$ satisfies the following two properties.

\begin{enumerate}[(a)]

\item We have that
\begin{equation}\label{optimal_regularity_sol}\Phi^\frac{1}{2}(\rho)\in L^2([0,T];H^1(\TT^d)).\end{equation}
\item There exists a subset $\mathcal{N}\subseteq(0,T]$ of Lebesgue measure zero such that for every $t\in[0,T]\setminus \mathcal{N}$, for every $\psi\in \C^\infty_c(\TT^d\times(0,\infty))\cap C(\TT^d\times[0,\infty))$ with $\psi(x,0)=0$, the kinetic function $\chi$ of $\rho$ satisfies that
\begin{equation}\label{skel_sol_def_1}\begin{aligned}
\int_{\TT^d}\int_\mathbb{R} \chi(x,\xi,t)\psi(x,\xi)\dx\dxi = & \int_0^t\int_\R\int_{\TT^d} \Phi'(\xi)\chi \Delta_x\psi \dx\dxi\dr - \int_0^t\int_\R\int_{\TT^d}p\partial_\xi \psi \dx\dxi\dr
\\ & + 2\int_0^t\int_{\TT^d} \frac{\Phi(\rho)}{\Phi'(\rho)}\nabla_x\Phi^\frac{1}{2}(\rho)\cdot g(x,t)(\partial_\xi\psi)(x,\rho)\dx\dr
\\ & + \int_0^t\int_{\TT^d} \Phi^\frac{1}{2}(\rho)g(x,t)\cdot (\nabla_x\psi)(x,\rho) \dx\dr
\\ & + \int_\R\int_{\TT^d}\overline{\chi}(\rho_0(x),\xi)\psi(x,\xi)\dx\dxi.
\end{aligned}\end{equation}
\end{enumerate}
\end{definition}

\begin{remark}  We observe that the equality in equation \eqref{skel_sol_def_1} is satisfied due to the optimal regularity \eqref{optimal_regularity_sol}, which requires the nonnegativity of the initial data (see Remark~\ref{rem_nonnegative}).  In general, we would only expect to obtain an inequality due to the presence of a nonnegative entropy defect measure (see \cite{ChenPerthame}).  \end{remark}

\begin{remark}  Definition~\ref{skel_sol_def} does not require any continuity from the solution in time, which is convenient when constructing and establishing the stability of solutions.  However, in Proposition~\ref{L1-continuity} below, we prove that every renormalized kinetic solution of \eqref{skel_kin} has a representative in $L^\infty([0,T];L^1(\TT^d))$ that is strongly $L^1(\TT^d)$-continuous in time.  \end{remark}

The following lemma proves that pathwise kinetic solutions in the sense of Definition~\ref{skel_sol_def} satisfy an integration by parts formula on the level of their kinetic functions.

\begin{lem}\label{ibp}  Let $\Phi\in\C([0,\infty))\cap\C^1_{\textrm{loc}}((0,\infty))$ be nondecreasing, let $\rho\colon\TT^d\rightarrow \R$ be a measurable function, and let $\chi=\overline{\chi}(\rho)\colon\TT^d\times\R\rightarrow\R$.  Assume that $\Phi^\frac{1}{2}(\rho)\in H^1(\mathbb{T}^d)$.  Then, for every $\psi\in\C^\infty_c(\mathbb{T}^d\times\mathbb{R})$,
\[\frac{1}{2}\int_\R\int_{\TT^d}\frac{\Phi'(\xi)}{\Phi^\frac{1}{2}(\xi)}\nabla_x\psi(x,\xi)\chi(x,\xi,r)\dx\dxi  =-\int_{\TT^d}\psi(x,\rho(x))\nabla_x\Phi^\frac{1}{2}(\rho(x))\dx.\]
\end{lem}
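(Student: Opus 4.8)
The plan is to verify the identity by a direct computation starting from the definition of the kinetic function $\chi = \overline\chi(\rho)$, reducing everything to an integral over $\TT^d$ via the distributional derivative in $\xi$, and then applying the chain rule for Sobolev functions. First I would fix $\psi \in \C^\infty_c(\TT^d\times\R)$ and introduce the antiderivative $\Psi(x,\xi) := \int_0^\xi \tfrac{\Phi'(\xi')}{\Phi^{1/2}(\xi')}\,\psi(x,\xi')\,d\xi'$, noting that $\tfrac{\Phi'}{\Phi^{1/2}} \in L^1_{\mathrm{loc}}$ since $\Phi \in W^{1,1}_{\mathrm{loc}}$ and $\Phi$ is nondecreasing, so that $\partial_\xi \Psi(x,\xi) = \tfrac{\Phi'(\xi)}{\Phi^{1/2}(\xi)}\psi(x,\xi)$ a.e.\ and $2\Psi(x,\xi) $ is, up to the $x$-dependence in $\psi$, precisely $2$ times a primitive of the integrand appearing on the left. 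The key observation is that $\int_\R \tfrac{\Phi'(\xi)}{\Phi^{1/2}(\xi)}\nabla_x\psi(x,\xi)\,\chi(x,\xi,r)\,d\xi = \int_0^{\rho(x)} \tfrac{\Phi'(\xi)}{\Phi^{1/2}(\xi)}\nabla_x\psi(x,\xi)\,d\xi$ when $\rho(x)\ge 0$ (and the signed analogue otherwise, which by nonnegativity of $\Phi$'s domain we may treat with the convention in $\overline\chi$); this inner integral equals $\nabla_x\bigl(\int_0^{\rho(x)} \tfrac{\Phi'(\xi)}{\Phi^{1/2}(\xi)}\psi(x,\xi)\,d\xi\bigr) - \tfrac{\Phi'(\rho(x))}{\Phi^{1/2}(\rho(x))}\psi(x,\rho(x))\,\nabla_x\rho(x)$ by differentiating the parametric integral, but since we do not know $\rho \in H^1$, this last step must be done more carefully.

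The clean route avoids $\nabla_x\rho$ entirely. I would instead write, with $G(\xi) := 2\Phi^{1/2}(\xi)$ so that $G'(\xi) = \tfrac{\Phi'(\xi)}{\Phi^{1/2}(\xi)}$ a.e., and observe
\[
\tfrac{1}{2}\int_\R\int_{\TT^d}\tfrac{\Phi'(\xi)}{\Phi^{1/2}(\xi)}\nabla_x\psi(x,\xi)\,\chi(x,\xi,r)\,dx\,d\xi
= \tfrac12\int_{\TT^d}\int_\R G'(\xi)\,\nabla_x\psi(x,\xi)\,\overline\chi(\rho(x),\xi)\,d\xi\,dx.
\]
Integrating by parts in $\xi$ in the inner integral (legitimate since $G' \in L^1_{\mathrm{loc}}$ and $\psi$ is compactly supported, using $\partial_\xi\overline\chi(\rho(x),\xi) = \delta_0(\xi) - \delta_0(\xi - \rho(x))$ as a measure) moves the $\xi$-derivative off $\chi$; more robustly, since $\nabla_x\psi(x,\cdot)$ is smooth and compactly supported, I can integrate by parts in $\xi$ writing $\int_\R G'(\xi)\nabla_x\psi\,\overline\chi\,d\xi = -\int_\R G(\xi)\,\partial_\xi(\nabla_x\psi\,\overline\chi)\,d\xi$ provided $G$ is locally absolutely continuous — which it is, being $2\Phi^{1/2}$ with $\Phi \in W^{1,1}_{\mathrm{loc}}$. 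This yields two terms: one with $\partial_\xi\nabla_x\psi$ integrated against $G(\xi)\overline\chi$, and one with $G(\rho(x))\nabla_x\psi(x,\rho(x))$ from the delta at $\xi = \rho(x)$. The first term, after undoing the $\xi$-integration, reassembles; the crucial simplification is that the chain rule for Sobolev compositions — $\nabla_x\bigl(\psi(x,\rho(x))\,\Phi^{1/2}(\rho(x))\bigr)$ type identities, justified because $\Phi^{1/2}(\rho) \in H^1(\TT^d)$ by hypothesis — lets me collapse the $x$-gradient terms into $-\int_{\TT^d}\psi(x,\rho(x))\,\nabla_x\Phi^{1/2}(\rho(x))\,dx$ after an integration by parts in $x$ on the torus (no boundary terms).

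The main obstacle is rigorously justifying the manipulations near $\xi = 0$ and near points where $\Phi' = 0$ or $\Phi^{1/2}$ fails to be Lipschitz, i.e.\ controlling the quotient $\Phi'/\Phi^{1/2}$, which is only $L^1_{\mathrm{loc}}$ and not bounded: one must ensure that all the parametric-integral differentiations and the $\xi$-integration by parts are valid in the $W^{1,1}_{\mathrm{loc}}$ (rather than $C^1$) setting, and that the composition $\Phi^{1/2}(\rho)$ interacts correctly with $\nabla_x\rho$ even though $\nabla_x\rho$ itself need not be a function. I expect this to be handled by first proving the identity for $\Phi$ smooth and strictly increasing (where everything is classical, and the chain rule $\nabla_x\Phi^{1/2}(\rho) = \tfrac{\Phi'(\rho)}{2\Phi^{1/2}(\rho)}\nabla_x\rho$ is standard) and for $\rho$ smooth, and then passing to the limit using the $H^1$-bound on $\Phi^{1/2}(\rho)$ together with dominated convergence, exploiting that the final identity only involves $\Phi^{1/2}(\rho)$, $\nabla_x\Phi^{1/2}(\rho)$, and the truncated integral $\int_0^{\rho}(\cdot)$, all of which are stable under such approximation. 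The delicate point in the limit is the term $\int_{\TT^d}\int_0^{\rho(x)} G'(\xi)\nabla_x\psi(x,\xi)\,d\xi\,dx$, whose convergence requires that $\Phi^{\eta,1/2} \to \Phi^{1/2}$ in a strong enough sense along the given smooth approximations — which is available from Lemma~\ref{ass_phi_reg}.
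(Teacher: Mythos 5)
Your ``clean route'' does not close, and the obstruction is exactly the one you flagged in your first paragraph. After integrating by parts in $\xi$ with $G = 2\Phi^{1/2}$ you are left with two pieces: a boundary term $\int_{\TT^d}\Phi^{1/2}(\rho(x))(\nabla_x\psi)(x,\rho(x))\,dx$ coming from $\partial_\xi\overline\chi = \delta_0(\xi)-\delta_0(\xi-\rho(x))$, and a residual term $-\tfrac12\int_{\TT^d}\int_\R G(\xi)(\partial_\xi\nabla_x\psi)(x,\xi)\,\overline\chi(\rho(x),\xi)\,d\xi\,dx$. Neither can be brought to the desired form by an $x$-integration by parts without hitting $\nabla_x\rho$: the boundary term contains the \emph{partial} gradient $(\nabla_x\psi)(x,\rho(x))$, which is not $\nabla_x\bigl[\psi(x,\rho(x))\bigr]$, and passing from one to the other costs a $(\partial_\xi\psi)(x,\rho)\nabla_x\rho$; likewise, integrating by parts in $x$ on the residual produces $\nabla_x\overline\chi(\rho(x),\xi) = \delta_0(\xi-\rho(x))\nabla_x\rho(x)$. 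The chain-rule identity you invoke for products like $\psi(x,\rho)\Phi^{1/2}(\rho)$ expands into a term proportional to $\nabla_x\rho$ as well, so the assumption $\Phi^{1/2}(\rho)\in H^1$ does not by itself rescue that step.

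The device the paper uses, and which your proposal never quite articulates, is the substitution $\xi\mapsto\Phi^{1/2}(\xi)$ in the velocity integral. The Jacobian $\tfrac12\,\Phi'/\Phi^{1/2}\,d\xi$ exactly absorbs the troublesome prefactor, and, crucially, since $\Phi^{1/2}$ is increasing with $\Phi^{1/2}(0)=0$, the composition identity $\chi\bigl(x,(\Phi^{1/2})^{-1}(\xi)\bigr) = \mathbf{1}_{\{0<\xi<\Phi^{1/2}(\rho(x))\}} = \tilde\chi(x,\xi)$ holds, where $\tilde\chi$ is the kinetic function of $\Phi^{1/2}(\rho)$. It is $\Phi^{1/2}(\rho)$, not $\rho$, that lies in $H^1(\TT^d)$, so the distributional identity $\nabla_x\tilde\chi = \delta_0\bigl(\xi-\Phi^{1/2}(\rho)\bigr)\nabla\Phi^{1/2}(\rho)$ is legitimate, and a single $x$-integration by parts against $\tilde\chi$ gives the result with no appeal to $\nabla_x\rho$. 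Your remark that the inner integral ``equals $\nabla_x\bigl(\int_0^{\rho(x)}\ldots\bigr) - \ldots\nabla_x\rho$'' shows you saw the right shape; the missing step is to change the integration variable so that the upper limit becomes $\Phi^{1/2}(\rho(x))$, after which the parametric differentiation is valid precisely because of the $H^1$ hypothesis. The smooth-approximation fallback you outline would also eventually work, but it is unnecessary and considerably heavier than the two-line change-of-variables argument.
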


\begin{proof} (see \cite[Lemma~2.6]{FehGes19} for full details) The proof is a small modification of \cite[Lemma~3.6]{FG17} and follows from change of variables formula and the fact that $\Phi$ is nondecreasing.\end{proof}

\section{Uniqueness of renormalized kinetic solutions}\label{sec_unique}

In this section, we will prove the uniqueness of renormalized kinetic solutions in the sense of Definition~\ref{skel_sol_def} for nonlinearities $\Phi$ that satisfy Assumption~\ref{as_unique} below.  The proof of uniqueness is significantly complicated by the fact that the parabolic defect measure
\[p=\delta_0(\xi-\rho)\frac{4\Phi(\xi)}{\Phi'(\xi)}\abs{\nabla\Phi^\frac{1}{2}(\rho)}^2,\]
is neither globally integrable nor decaying at infinity with respect to the velocity variable $\xi\in\R$.   It is for this reason that we introduce a cutoff in velocity.  Lemma~\ref{lem_partition} below is used to control the error terms that arise when removing this cutoff function.  We prove the uniqueness of renormalized kinetic in Theorem~\ref{thm_unique} below.

\begin{assumption}\label{as_unique}  Let $\Phi\in\C([0,\infty))\cap\C^1_{\textrm{loc}}((0,\infty))$ satisfy $\Phi(0)=0$ and $\Phi'(\xi)>0$ for every $\xi\in(0,\infty)$.  Assume that
\[\Phi'\;\;\textrm{is locally $\nicefrac{1}{2}$-H\"older continuous on $(0,\infty)$,}\]
and that there exists $c\in(0,\infty)$ such that
\begin{equation}\label{skel_continuity_3}\sup_{0\leq \xi\leq M}\abs{\frac{\Phi(\xi)}{\Phi'(\xi)}}\leq cM.\end{equation}
\end{assumption}

\begin{lem}\label{lem_partition}  Let $(X,\mathcal{S},\mu)$ be a measure space, let $K\in\mathbb{N}$, let $\{f_k\colon X\rightarrow\R\}_{k\in\{1,2,\ldots,K\}}\subseteq L^1(X)$, and for every $k\in\{1,2,\ldots,K\}$ let $\{B_{n,k}\subseteq X\}_{n\in\mathbb{N}}\subseteq \mathcal{S}$ be disjoint subsets.  Then,
\[\liminf_{n\rightarrow\infty}\left(n\sum_{k=1}^K\int_{B_{n,k}}\abs{f_k}\dd\mu\right)=0.\]
\end{lem}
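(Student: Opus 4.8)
The plan is to prove the statement by contradiction. Suppose the liminf is strictly positive; then there exist $\varepsilon_0 > 0$ and $N_0 \in \mathbb{N}$ such that for all $n \geq N_0$ one has $n \sum_{k=1}^K \int_{B_{n,k}} |f_k| \, d\mu \geq \varepsilon_0$, equivalently $\sum_{k=1}^K \int_{B_{n,k}} |f_k| \, d\mu \geq \varepsilon_0 / n$. The intuition is that, since each family $\{B_{n,k}\}_{n \in \mathbb{N}}$ is \emph{disjoint} in $n$, the integrals $\int_{B_{n,k}} |f_k| \, d\mu$ are the "masses" of $|f_k|$ on a pairwise disjoint sequence of sets, hence they sum to something finite (at most $\|f_k\|_{L^1(X)}$); a sequence of nonnegative numbers with finite sum cannot be bounded below by $\varepsilon_0 / (Kn)$, since $\sum_n 1/n = \infty$.

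The key steps, in order, would be: (1) Fix $k \in \{1, \dots, K\}$. Since $\{B_{n,k}\}_{n \in \mathbb{N}}$ are disjoint and $|f_k| \in L^1(X)$, countable additivity of the measure $A \mapsto \int_A |f_k| \, d\mu$ gives $\sum_{n=1}^\infty \int_{B_{n,k}} |f_k| \, d\mu = \int_{\bigcup_n B_{n,k}} |f_k| \, d\mu \leq \|f_k\|_{L^1(X)} < \infty$. (2) Summing over the finitely many $k$, we get $\sum_{n=1}^\infty \sum_{k=1}^K \int_{B_{n,k}} |f_k| \, d\mu \leq \sum_{k=1}^K \|f_k\|_{L^1(X)} < \infty$. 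Denote $a_n := \sum_{k=1}^K \int_{B_{n,k}} |f_k| \, d\mu \geq 0$, so $\sum_n a_n < \infty$ and in particular $a_n \to 0$, but more is true: a convergent series of nonnegative terms has $\liminf_n (n \, a_n) = 0$. (3) Prove this last elementary fact: if $\liminf_n (n a_n) = 2\delta > 0$, then $a_n \geq \delta / n$ for all large $n$, whence $\sum_n a_n \geq \delta \sum_{n \geq N} 1/n = \infty$, contradicting summability. Since $n a_n \geq 0$, its liminf is $\geq 0$, so $\liminf_n (n a_n) = 0$, which is exactly the claim.

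I do not expect any serious obstacle here — the lemma is a soft measure-theoretic statement and the only ingredients are countable additivity of the integral against a fixed $L^1$ density and the divergence of the harmonic series. The one point requiring a little care is step (3): one wants $\liminf$, not $\lim$, and indeed $\lim_n (n a_n)$ need not exist (e.g. $a_n$ supported on a sparse set), so the argument must be phrased as "if the liminf were positive we could bound $a_n \gtrsim 1/n$ along a full tail, forcing divergence," rather than trying to extract a limit. Everything else is bookkeeping over the finite index set $k \in \{1, \dots, K\}$.
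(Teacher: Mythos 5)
Your proof is correct, and it takes a genuinely simpler route than the paper's. The paper also argues by contradiction, but after obtaining $n\sum_{k=1}^K\int_{B_{n,k}}|f_k|\,d\mu\geq\varepsilon/2$ for all $n\geq N$, it applies a pigeonhole step over the finite index set $k\in\{1,\ldots,K\}$: for each $n\geq N$ some $k$ must satisfy $\int_{B_{n,k}}|f_k|\,d\mu\geq\varepsilon/(2Kn)$, this partitions the tail of $\mathbb{N}$ into sets $\mathcal{I}_{N,k}$, and since $\sum_{n\geq N}1/n=\infty$ some $\mathcal{I}_{N,k_0}$ has $\sum_{n\in\mathcal{I}_{N,k_0}}1/n=\infty$, contradicting $f_{k_0}\in L^1(X)$ via disjointness of $\{B_{n,k_0}\}_n$. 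You instead aggregate first: set $a_n=\sum_{k=1}^K\int_{B_{n,k}}|f_k|\,d\mu$, observe $\sum_n a_n\leq\sum_k\|f_k\|_{L^1}<\infty$ by disjointness of each $\{B_{n,k}\}_n$ and Tonelli, and then use the elementary fact that a summable nonnegative sequence $(a_n)$ has $\liminf_n(n\,a_n)=0$. Both proofs use the same two ingredients (countable additivity of $A\mapsto\int_A|f_k|\,d\mu$ and divergence of the harmonic series), but your aggregation over $k$ before invoking divergence eliminates the pigeonhole entirely and is cleaner; the paper's version has the marginal advantage of isolating a single witness $k_0$, which here buys nothing.
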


\begin{proof}   Proceeding by contradiction, suppose that there exists $\ve\in(0,1)$ such that 
\[\liminf_{n\rightarrow\infty}\left(n\sum_{k=1}^K\int_{B_{n,k}}\abs{f_k}\dd\mu\right)\geq\ve.\]
Then, there exists $N\in\mathbb{N}$ such that, for every $n\geq N$,
\[n\sum_{k=1}^K\int_{B_{n,k}}\abs{f_k}\dd\mu\geq \frac{\ve}{2}.\]
For every $k\in\{1,2,\ldots,K\}$ let $\mathcal{I}_{N,k}\subseteq[N,N+1,\ldots)$ be defined by
\[\mathcal{I}_{N,k}=\left\{n\in[N,N+1,\ldots)\colon \int_{B_{n,k}}\abs{f_k}\dd\mu\geq \frac{\ve}{2Kn}\right\}.\]
Since by definition $[N,N+1,\ldots)=\cup_{k=1}^K\mathcal{I}_{N,k}$, and since $\sum_{n=N}^\infty\frac{1}{n}=\infty$,
there exists $k_0\in\{1,2,\ldots,K\}$ such that $\sum_{n\in \mathcal{I}_{N,k_0}}\frac{1}{n}=\infty$.  This contradicts the assumption that $f_{k_0}\in L^1(X)$, since the assumption that the $\{B_{n,k_0}\}_{n\in\mathbb{N}}$ are disjoint and the definition of $\mathcal{I}_{n,k_0}$ imply that
\[\infty=\sum_{n\in\mathcal{I}_{N,k_0}}\frac{1}{n}\leq \frac{2K}{\ve}\sum_{n\in\mathcal{I}_{N,k_0}}\int_{B_{n,k_0}}\abs{f_{k_0}}\dd\mu\leq \frac{2K}{\ve}\int_X \abs{f_{k_0}}<\infty,\]
which completes the proof.  \end{proof}

\begin{thm}\label{thm_unique} Let $T\in(0,\infty)$, let $\Phi\in\C([0,\infty))\cap\C^1_{\textrm{loc}}((0,\infty))$ satisfy Assumption~\ref{as_unique}, let $g\in L^2(\TT^d\times [0,T];\R^d)$, and let $\rho_0^1,\rho_0^2\in \Ent_\Phi(\mathbb{T}^d)$.  Let $\rho^1,\rho^2\in L^\infty([0,T];L^1(\TT^d))$ be renormalized kinetic solutions (see Definition~\ref{skel_sol_def}) of equation \eqref{skel_eq} with control $g$ and with initial data $\rho_0^1,\rho_0^2$.  Then,
\[\norm{\rho^1-\rho^2}_{L^\infty([0,T];L^1(\mathbb{T}^d))}\leq \norm{\rho^1_0-\rho^2_0}_{L^1(\mathbb{T}^d)}.\]
\end{thm}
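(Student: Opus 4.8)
The plan is to run the standard doubling-of-variables / kinetic contraction argument, but with two modifications forced by the weakness of the a-priori bound \eqref{formal_est}: a cutoff in the velocity variable $\xi$, and a careful removal of that cutoff using Lemma~\ref{lem_partition}. First I would write down the kinetic equations \eqref{skel_sol_def_1} for $\chi^1=\overline\chi(\rho^1)$ and $\chi^2=\overline\chi(\rho^2)$, with parabolic defect measures $p^1,p^2$, and form the quantity
\[
\int_{\TT^d}\int_\R \chi^1(x,\xi,t)(1-\chi^2(x,\xi,t))\,\dx\,\dxi + \int_{\TT^d}\int_\R (1-\chi^1)\,\chi^2\,\dx\,\dxi,
\]
which for the one-sided kinetic functions equals $\norm{\rho^1(\cdot,t)-\rho^2(\cdot,t)}_{L^1(\TT^d)}$. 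Since both kinetic functions solve linear equations in $(\chi^i)$, I would test the equation for $\chi^1$ against a regularization of $(1-\chi^2)$ and vice versa — concretely, introduce a double mollification $\rho_\kappa^i$ in $x$ and a mollification in $\xi$, together with a smooth velocity cutoff $\beta_M(\xi)$ supported in, say, $\xi\le 2M$ and equal to $1$ on $\xi\le M$ — and pair the two equations. The parabolic terms $\Phi'(\xi)\Delta_x\chi$ produce, after integrating by parts, the usual good term $-\int(\nabla_x\chi^1)\cdot(\nabla_x\chi^2)\,\Phi'(\xi)$ together with the nonnegative contributions of $p^1$ and $p^2$ against $\partial_\xi$ of the test objects; because of the cutoff these last are integrals of $p^i\beta_M'(\xi)$ over the region $\{M\le\xi\le 2M\}$ and over the interaction region, and this is exactly where the non-decay of $p$ at infinity bites.

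The core estimates divide into: (i) the commutator errors from commuting the $x$-mollification with the nonlinearities $\Phi'(\xi)$ and $\Phi^{1/2}(\xi)$ in the transport-type terms $\partial_\xi(g\Phi^{1/2}(\xi)\nabla_x\chi)$ and $\nabla_x\cdot(g\Phi^{1/2}(\xi)\partial_\xi\chi)$; (ii) the error from removing the $\xi$-cutoff $\beta_M$. For (i), after passing to the limit in the mollification parameter, the transport terms should cancel against each other up to terms controlled using the structural bounds \eqref{skel_continuity}, \eqref{skel_continuity_1}, \eqref{skel_continuity_2} in Assumption~\ref{as_unique} — these are precisely designed so that the relevant differences $\tfrac{\Phi^{1/2}(\xi)}{\Phi'(\xi)}(\Phi^{1/2}(\xi)-\Phi^{1/2}(\xi'))$ etc.\ are $O(\delta)$ uniformly on compact velocity sets, so that the commutators vanish as the mollification scale $\to 0$, using the integrability of $g\in L^2$ and $\nabla\Phi^{1/2}(\rho^i)\in L^2$ provided by \eqref{optimal_regularity_sol}. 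One must be careful that all of this is done on the region where the cutoff is active, $\{\xi\le 2M\}$, so that "compact velocity set" is legitimate; the constant $c$ in those bounds is allowed to depend on $M$.

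The main obstacle is step (ii): controlling $\int_0^T\!\!\int\!\!\int p^i(x,\xi,r)\,\beta_M'(\xi)\,(\cdots)\,\dx\,\dxi\,\dr$ and the analogous boundary-in-$\xi$ error from the transport terms, and showing that along a suitable sequence $M=M_n\to\infty$ these errors vanish. Here I would not try to prove $p^i$ is integrable or decaying — it is not — but instead invoke Lemma~\ref{lem_partition}: the relevant error is bounded (using \eqref{skel_continuity_3}, which gives $\tfrac{\Phi(\xi)}{\Phi'(\xi)}\le cM$ on $\{\xi\le M\}$, hence $\le 2cM$ on the cutoff band) by roughly $M\int_0^T\!\!\int_{\TT^d}\mathbf 1_{\{M\le\rho^i\le 2M\}}\abs{\nabla\Phi^{1/2}(\rho^i)}^2\,\dx\,\dr$, and since $\abs{\nabla\Phi^{1/2}(\rho^i)}^2\in L^1(\TT^d\times[0,T])$ by \eqref{optimal_regularity_sol} while the sets $\{M_n\le\rho^i\le 2M_n\}$ for a dyadic sequence $M_n=2^n$ can be arranged to be disjoint, Lemma~\ref{lem_partition} (with $K=2$, the two solutions, and $f_k=\abs{\nabla\Phi^{1/2}(\rho^k)}^2$) yields $\liminf_{n\to\infty}$ of this error equal to $0$. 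Passing to that subsequence, all error terms drop, and one is left with
\[
\norm{\rho^1(\cdot,t)-\rho^2(\cdot,t)}_{L^1(\TT^d)} \le \norm{\rho_0^1-\rho_0^2}_{L^1(\TT^d)} + (\textrm{good nonpositive terms}),
\]
valid for every $t$ outside the exceptional null set; taking the supremum over such $t$ (and using $\rho^i\in L^\infty([0,T];L^1)$ to handle the null set) finishes the proof. I would also record at the outset the integration-by-parts identity of Lemma~\ref{ibp}, which is what lets the $g$-terms in \eqref{skel_sol_def_1} be rewritten in terms of $\chi^i$ so that the doubling pairing makes sense.
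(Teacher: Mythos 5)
Your overall strategy is the paper's: double variables with a spatial and a velocity mollification, truncate in the velocity, show the parabolic/hyperbolic/control commutator errors vanish as the mollification scales go to zero using Assumption~\ref{as_unique}, and remove the velocity cutoff along a subsequence using Lemma~\ref{lem_partition}. The conceptual picture, including the recognition that Lemma~\ref{ibp} is what makes the doubling pairing work and that the constants in Assumption~\ref{as_unique} are allowed to depend on $M$, is correct. But there are two concrete gaps.

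First, your cutoff $\beta_M$ is $1$ on $[0,M]$ and hence does not vanish near $\xi=0$. The estimates \eqref{skel_continuity}, \eqref{skel_continuity_1}, \eqref{skel_continuity_2} are only asserted on the region $\frac{1}{M}\le\xi,\xi'\le M$: they control quotients with $\Phi'(\xi)$ in the denominator, and nothing is assumed about their behavior for $\xi$ near $0$. Without a cutoff that vanishes near zero you simply cannot invoke those assumptions in the regime $\xi\approx 0$. The paper's $\zeta^M$ vanishes on $[0,\nicefrac{1}{M}]$ and rises to $1$ over $[\nicefrac{1}{M},\nicefrac{2}{M}]$, which produces an additional boundary error near $\{0<\rho^i\lesssim\nicefrac{2}{M}\}$; this extra error is harmless because $\zeta^M$ has slope $M$ there while \eqref{skel_continuity_3} gives $\Phi/\Phi'\le c/M$ on that region, so the two powers of $M$ cancel and the term vanishes by dominated convergence. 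You need to add this cutoff and this boundary estimate.

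Second, the treatment of the cutoff at $\xi\to\infty$ is internally inconsistent. You describe a band of width $M$ (support of $\beta_M'$ in $[M,2M]$), on which $\abs{\beta_M'}\sim\nicefrac{1}{M}$; combined with the bound $\Phi/\Phi'\le cM$ from \eqref{skel_continuity_3} on $[0,2M]$, the resulting error for the $p$-term is $O\bigl(\int_{\{M\le\rho^i\le 2M\}}\abs{\nabla\Phi^{1/2}(\rho^i)}^2\bigr)$ with \emph{no} outstanding factor of $M$ — the two powers of $M$ cancel — and this vanishes by dominated convergence alone; Lemma~\ref{lem_partition} is not needed. The bound you wrote, "$M\int_{\{M\le\rho^i\le 2M\}}\abs{\nabla\Phi^{1/2}(\rho^i)}^2$", forgets the $\nicefrac{1}{M}$ from $\beta_M'$. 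Moreover, even if the error did carry a factor $M$, Lemma~\ref{lem_partition} as stated has a linear weight $n$ against a family of disjoint sets indexed by $n$; your proposed dyadic choice $M_n=2^n$ with the bands $\{2^n\le\rho\le 2^{n+1}\}$ would require the weight $2^n$, which does not follow from the lemma (a sequence with $\int_{B_n}\abs{f}\sim 2^{-n}$ is a counterexample). If you instead use the paper's cutoff that drops from $1$ to $0$ over a band $[M,M+1]$ of \emph{unit} width (so $\abs{\partial_\eta\zeta^M}\sim 1$), the error does carry the factor $M+1$ and is of the form $(M+1)\int_{\{M\le\rho^i\le M+1\}}(\abs{g}^2+\abs{\nabla\Phi^{1/2}(\rho^i)}^2)$ — note that $\abs{g}^2$ must also be included in your $f_k$'s — and then Lemma~\ref{lem_partition} applies with integer $M$ and (after splitting each solution's bands into even and odd starting points) $K=4$ disjoint families of width-one bands. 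Either route closes the argument, but the version you wrote is neither.
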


\begin{proof}  We will write $\chi^1,\chi^2\in L^\infty([0,T];L^1(\TT^d\times\mathbb{R}))$ for the corresponding kinetic functions, and we will write $p^1,p^2$ for the corresponding parabolic defect measures.  Finally, we will write $\mathcal{N}^1,\mathcal{N}^2\subseteq (0,T)$ for the zero sets appearing in Definition~\ref{skel_sol_def}, and we define $\mathcal{N}=\mathcal{N}^1\cup\mathcal{N}^2$.  To simplify the notation we define, for every $(x,\xi,t)\in\TT^d\times\mathbb{R}\times[0,T]$ and $i\in\{1,2\}$,
\[\chi^i_t(x,\xi)=\chi(x,\xi,t),\]
and we will make similar conventions for $p^i_t$ and all other time-dependent functions or measures appearing in the proof.

Let $\kappa^s\colon\TT^d\rightarrow[0,\infty)$ be a standard convolution kernel satisfying that,  as distributions $\TT^d$,
\[\int_{\TT^d}\kappa^s \dx=1\;\;\textrm{and}\;\;\lim_{\varepsilon\rightarrow 0}\varepsilon^{-d}\kappa^s(\nicefrac{\cdot}{\varepsilon})=\delta_0.\]
Let $\kappa^v\colon\mathbb{R}\rightarrow[0,\infty)$ be a standard convolution kernel satisfying that, as distributions on $\mathbb{R}$,
\[\int_\mathbb{R}\kappa^v\dx=1\;\;\textrm{and}\;\;\lim_{\varepsilon\rightarrow 0}\varepsilon^{-1}\kappa^v(\nicefrac{\cdot}{\varepsilon})\rightarrow\delta_0.\]
For every $\varepsilon,\delta\in(0,1)$, let $\kappa^{\ve,\d}\colon(\TT^d)^2\times\mathbb{R}^2\rightarrow[0,\infty)$ be defined by
\[\kappa^{\ve,\d}(x,y,\xi,\eta)=\left(\varepsilon^{-d}\kappa^s(\nicefrac{x-y}{\varepsilon})\right)\left(\delta^{-1}\kappa^v(\nicefrac{\xi-\eta}{\delta})\right).\]
Finally, for every $M\in(0,\infty)$, let $\zeta^M:\mathbb{R}\rightarrow[0,1]$ be continuous piecewise linear function satisfying
\[\zeta^M(\xi)=\left\{\begin{aligned}
& 0 && \textrm{if}\;\;\abs{\xi}\leq\frac{1}{M}\;\;\textrm{or}\;\;\abs{\xi}\geq M+1, && 1 && \textrm{if}\;\;\frac{2}{M}\leq\abs{\xi}\leq M,
\\ & M\left(\abs{\xi}-\frac{1}{M}\right) && \textrm{if}\;\;\frac{1}{M}\leq\abs{\xi}\leq \frac{2}{M}, && M+1-\abs{\xi} && \textrm{if}\;\;M\leq \abs{\xi}\leq M+1.\end{aligned}\right.\]
The convolution kernel $\kappa^{\ve,\d}$ will play the role of the test function in Definition~\ref{skel_sol_def}.  The cutoff $\zeta^M$ is necessary owing to the fact that the parabolic defect measure is not globally integrable in the velocity variable $\xi\in\R$.

For every $i\in\{1,2\}$ and $\varepsilon,\delta\in(0,1)$, let $\chi^{i,\varepsilon,\delta}\colon\TT^d\times\mathbb{R}\times[0,T]\rightarrow\mathbb{R}$ be defined by
\[\chi^{i,\varepsilon,\delta}_t(y,\eta)=\int_\mathbb{R}\int_{\TT^d}\chi^i_t(x,\xi)\kappa^{\ve,\d}(x,y,\xi,\eta)\dx\dxi.\]
For every $\delta\in(0,1)$, let $\sgn^\delta\colon\mathbb{R}\rightarrow\mathbb{R}$ be defined for every $\eta\in\mathbb{R}$, $y\in \TT^d$, and $\ve\in(0,1)$ by
\[\sgn^\delta(\eta)=\int_\mathbb{R}\sgn(\xi)\left(\delta^{-1}\kappa^v\left(\frac{\xi-\eta}{\delta}\right)\right)\dxi=\int_\mathbb{R}\int_{\TT^d}\sgn(\xi)\kappa^{\ve,\d}(x,y,\xi,\eta)\dx\dxi.\]
Let $t\in(0,T]\setminus \mathcal{N}$ and $M\in(0,\infty)$.  Properties of the kinetic function prove that
\begin{equation}\label{su_0}\begin{aligned}
& \int_\mathbb{R}\int_{\TT^d}\abs{\chi^1_t-\chi^2_t}^2\zeta^M(\eta)\dy\deta \\ & =\lim_{\varepsilon,\delta\rightarrow 0}\int_\mathbb{R}\int_{\TT^d}\abs{\chi^{1,\varepsilon,\delta}_t-\chi^{2,\varepsilon,\delta}_t}^2\zeta^M(\eta)\dy\deta
\\ & = \lim_{\varepsilon,\delta\rightarrow 0}\int_\mathbb{R}\int_{\TT^d}\left(\chi^{1,\ve,\delta}_t\sgn^\delta(\eta)+\chi^{2,\varepsilon,\delta}_t\sgn^\delta(\eta)-2\chi^{1,\ve,\delta}_t\chi^{2,\ve,\delta}_t\right)\zeta^M(\eta)\dy\deta.
\end{aligned}\end{equation}
Let $\varepsilon,\delta\in(0,1)$.  Definition~\ref{skel_sol_def} implies for every $i\in\{1,2\}$ that, as distributions on $\TT^d\times\R\times(0,T)$,
\[\begin{aligned}
\partial_t\chi^{i,\ve,\delta}_t(y,\eta) = & \int_\R\int_{\TT^d}\Phi'(\xi)\chi^i_t\Delta_x\kappa^{\ve,\d}(x,y,\xi,\eta)\dx\dxi \\ & -\int_\R\int_{\TT^d}p^i_t\partial_\xi\kappa^{\ve,\d}(x,y,\xi,\eta)\dx\dxi
\\ & +2 \int_{\TT^d}\frac{\Phi(\rho^i)}{\Phi'(\rho^i)}g(x,t)\cdot \nabla_x\Phi^\frac{1}{2}(\rho^i)(\partial_\xi \kappa^{\ve,\d})(x,y,\rho^i,\eta)\dx
\\ & + \int_{\TT^d}\Phi^\frac{1}{2}(\rho^i)g(x,t)\cdot(\nabla_x\kappa^{\ve,\d})(x,y,\rho^i,\eta)\dx.
\end{aligned}\]
For every $i\in\{1,2\}$, let $\overline{\kappa}^{\ve,\d}_i:(\TT^d)^2\times\mathbb{R}\times[0,T]\rightarrow[0,\infty)$ be defined by
\[\overline{\kappa}^{\ve,\d}_{i,t}(x,y,\eta)=\kappa^{\ve,\d}(x,y,\rho^i(x,t),\eta).\]
It then follow from Lemma~\ref{ibp} and the definition of the convolution kernel that, as distributions on $\TT^d\times\R\times(0,T)$,
\begin{equation}\label{su_7} \begin{aligned}
\partial_t\chi_t^{i,\ve,\delta}(y,\eta)= & \nabla_y\cdot \left(2\int_{\TT^d}\Phi^\frac{1}{2}(\rho^i)\nabla_x \Phi^\frac{1}{2}(u_i)\overline{\kappa}^{\ve,\d}_{i,t}\dx\right) + \partial_\eta\int_\R\int_{\TT^d}p^i_t\kappa^{\ve,\d}\dx\dxi
\\ & -\partial_\eta\left(2\int_\R\int_{\TT^d}g(x,t)\frac{\Phi(\rho^i)}{\Phi'(\rho^i)}\cdot \nabla_x\Phi^\frac{1}{2}(\rho^i)\overline{\kappa}^{\ve,\d}_{i,t}\dx\right)- \nabla_y\cdot  \int_{\TT^d}g(x,t)\Phi^\frac{1}{2}(\rho^i)\overline{\kappa}^{\ve,\d}_{i,t}\dx.
\end{aligned}\end{equation}
We define
\begin{equation}\label{su_0007}\mathbb{I}^{\ve,\d,M}_t=\int_\mathbb{R}\int_{\TT^d}\left(\chi^{1,\ve,\delta}_t\sgn^\delta(\eta)+\chi^{2,\varepsilon,\delta}_t\sgn^\delta(\eta)-2\chi^{1,\ve,\delta}_t\chi^{2,\ve,\delta}_t\right)\zeta^M(\eta)\dy\deta.\end{equation}
We will analyze the terms involving the $\sgn$ function and the mixed term separately.  For every $i\in\{1,2\}$, let
\begin{equation}\label{su_07}\mathbb{I}^{\ve,\d,M}_{t,i,\sgn}=\int_\mathbb{R}\int_{\TT^d}\chi^{i,\ve,\delta}_t\sgn^\delta(\eta)\zeta^M(\eta)\dy\deta\;\;\textrm{and}\;\;\mathbb{I}^{\ve,\d,M}_{t,\textrm{mix}}=\int_\R\int_{\TT^d}\chi^{1,\ve,\delta}_t\chi^{2,\ve,\delta}_t\zeta^M(\eta)\dy\deta.\end{equation}

\textbf{The sign terms.}  We will first analyze the $\sgn $ terms \eqref{su_07}, and we will first consider the case $i=1$.  We will write $(x,y,\xi,\eta)$ for the variables defining the convolution of $\chi^{1,\ve,\d}_t$, and we will write $(x',y,\xi',\eta)$ for the variables defining the convolution of $\sgn$.  Let $\kappa^{\ve,\d}_1:(\TT^d)^2\times\mathbb{R}^2\rightarrow[0,\8)$ be defined for every $(x,y,\xi,\eta)\in (\TT^d)^2\times\mathbb{R}^2$ by
\[\kappa^{\ve,\d}_1(x,y,\xi,\eta)=\kappa^{\ve,\d}(x,y,\xi,\eta)\]
and let $\kappa^{\ve,\d}_2:(\TT^d)^2\times\mathbb{R}^2\rightarrow[0,\infty)$ be defined analogously for every $(x',y,\xi',\eta)\in (\TT^d)^2\times\R^2$.
It follows from \eqref{su_7} that, as distributions on $(0,T)$, for every $t\in(0,T)$,
\[\begin{aligned}
\partial_t \mathbb{I}^{\ve,\d,M}_{t,1,\sgn}=& -\int_{\R^2}\int_{(\TT^d)^2}p^1_t\kappa^{\ve,\d}_1\partial_\eta\left(\sgn^\d(\eta)\zeta^{M}(\eta)\right)\dx\dxi\dy\deta
\\ &  + 2\int_{\R}\int_{(\TT^d)^2}g(x,t)\frac{\Phi(\rho^1)}{\Phi'(\rho^1)}\cdot \nabla_x\Phi^\frac{1}{2}(\rho^1)\overline{\kappa}^{\ve,\d}_{1,t}\partial_\eta\left(\sgn^\d(\eta)\zeta^M(\eta)\right)\dx\dy\deta.
\end{aligned}\]
Properties of the convolution kernel and the distributional equality $\partial_\xi\sgn=2\delta_0$ prove that
\begin{equation}\label{su_11}\begin{aligned}
\partial_t \mathbb{I}^{\ve,\d,M}_{t,1,\sgn}=& -2\int_{\R^2}\int_{(\TT^d)^3}p^1_t\kappa^{\ve,\d}_1\kappa^{\ve,\d}(x',y,0,\eta)\zeta^M - \int_{\R^2}\int_{(\TT^d)^2}p^1_t\kappa^{\ve,\d}_1\sgn^\delta(\eta)\partial_\eta\zeta^M
\\ & +4\int_\R\int_{(\TT^d)^3} g(x,t)\frac{\Phi(\rho^1)}{\Phi'(\rho^1)}\cdot \nabla_x\Phi^\frac{1}{2}(\rho^1)\overline{\kappa}^{\ve,\d}_{1,t}\kappa^{\ve,\d}(x',y,0,\eta)\zeta^M
\\ & + 2\int_\R\int_{(\TT^d)^2}g(x,t)\frac{\Phi(\rho^1)}{\Phi'(\rho^1)}\cdot \nabla_x\Phi^\frac{1}{2}(\rho^1)\overline{\kappa}^{\ve,\d}_{1,t}\sgn^\d(\eta)\partial_\eta\zeta^M.
\end{aligned}\end{equation}
The case $i=2$ is identical, but to clarify some important cancellations below we will use the variables $(x',y,\xi',\eta)\in(\TT^d)^2\times\R^2$ for the convolution defining $\chi^{2,\ve,\d}$ and the variables $(x,y,\xi,\eta)\in(\TT^d)^2\times\R^2$ for the variables defining $\sgn^\d$.  This completes the initial analysis of the $\sgn$ terms.

\textbf{The mixed term.}  We will now analyze the mixed term \eqref{su_07}.  We write $(x,y,\xi,\eta)$ for the variables defining $\chi^{1,\ve,\d}_t$, and we write $(x',y,\xi',\eta)$ for the variables defining $\chi^{2,\ve,\d}_t$.  Let $\kappa^{\ve,\d}_1,\kappa^{\ve,\d}_{2}:(\TT^d)^2\times\mathbb{R}^2\rightarrow[0,\8)$ be defined as above.
As distributions on $(0,T)$,
\begin{equation}\label{su_011}\begin{aligned}
\partial_t\mathbb{I}^{\ve,\d,M}_{t,\textrm{mix}} = & \int_\R\int_{\TT^d}\partial_t\chi^{1,\ve,\d}_t\chi^{2,\ve,\d}_t\zeta^M(\eta)\dy\deta+\int_\R\int_{\TT^d}\chi^{1,\ve,\d}_t\partial_t\chi^{2,\ve,\d}_t\zeta^M(\eta)\dy\deta
\\ = & \partial_t\mathbb{I}^{\ve,\d,M}_{t,1,\textrm{mix}}+\partial_t\mathbb{I}^{\ve,\d,M}_{t,2,\textrm{mix}}.
\end{aligned}\end{equation}
It follows from \eqref{su_7} that
\[\begin{aligned}
\partial_t\mathbb{I}^{\ve,\d,M}_{t,1,\textrm{mix}} = &  -2\int_\R\int_{(\TT^d)^2}\Phi^{\frac{1}{2}}(\rho^1)\nabla_x\Phi^\frac{1}{2}(\rho^1)\overline{\kappa}^{\ve,\d}_{1,t}\cdot \nabla_y\chi^{2,\ve,\d}_t\zeta^M\dx\dy\deta
\\ & -\int_{\R^2}\int_{(\TT^d)^2}p^1_t\kappa^{\ve,\d}_1\partial_\eta(\chi^{2,\ve,\d}_t\zeta^M)\dx\dxi\dy\deta
\\ & +2\int_\R\int_{(\TT^d)^2}g(x,t)\frac{\Phi(\rho^1)}{\Phi'(\rho^1)}\cdot \nabla_x\Phi^\frac{1}{2}(\rho^1)\overline{\kappa}^{\ve,\d}_{1,t}\partial_\eta(\chi^{2,\ve,\d}_t\zeta^M)\dx\dy\deta
\\ & +\int_\R\int_{(\TT^d)^2}g(x,t)\Phi^\frac{1}{2}(\rho^1)\overline{\kappa}^{\ve,\d}_{1,t}\cdot \nabla_y\chi^{2,\ve,\d}_t\zeta^M\dx \dy\deta.
\end{aligned}\]
The definition of the convolution kernel implies that, omitting the integration variables,
\begin{equation}\label{su_12}\begin{aligned}
\partial_t\mathbb{I}^{\ve,\d,M}_{t,1,\textrm{mix}} = &  2\int_{\R^2}\int_{(\TT^d)^3}\Phi^{\frac{1}{2}}(\rho^1)\nabla_x\Phi^\frac{1}{2}(\rho^1)\overline{\kappa}^{\ve,\d}_{1,t}\chi^2_t\cdot \big(\nabla_{x'}\kappa^{\ve,\d}_2\big)\zeta^M
\\ & +\int_{\R^3}\int_{(\TT^d)^3}p^1_t\kappa^{\ve,\d}_1\chi^2\big(\partial_{\xi'}\kappa^{\ve,\d}_2\big)\zeta^M-\int_{\R^2}\int_{(\TT^d)^2}p^1_t\kappa^{\ve,\d}_1\chi^{2,\ve,\d}_t\partial_\eta\zeta^M
\\ & -2\int_{\R^2}\int_{(\TT^d)^3}g(x,t)\frac{\Phi(\rho^1)}{\Phi'(\rho^1)}\cdot \nabla_x\Phi^\frac{1}{2}(\rho^1)\overline{\kappa}^{\ve,\d}_{1,t}\chi^2_t\big(\partial_{\xi'}\kappa^{\ve,\d}_2\big)\zeta^M
\\ & +2\int_{\R}\int_{(\TT^d)^2}g(x,t)\frac{\Phi(\rho^1)}{\Phi'(\rho^1)}\cdot \nabla_x\Phi^\frac{1}{2}(\rho^1)\overline{\kappa}^{\ve,\d}_{1,t}\chi^{2,\ve,\d}_t\partial_\eta\zeta^M
\\ & - \int_{\R^2}\int_{(\TT^d)^3}g(x,t)\Phi^\frac{1}{2}(\rho^1)\overline{\kappa}^{\ve,\d}_{1,t}\chi^2_t\cdot \big(\nabla_{x'}\kappa^{\ve,\d}_2\big)\zeta^M.
\end{aligned}\end{equation}
The distributional equality $\partial_{\xi'}\chi^2=\delta_0(\xi')-\delta_0(\xi'-\rho^2)$ and Lemma~\ref{ibp} prove that, after integrating by parts in \eqref{su_12},
\begin{equation}\label{su_18}\begin{aligned}
\partial_t\mathbb{I}^{\ve,\d,M}_{t,1,\textrm{mix}} = &  -4\int_{\R}\int_{(\TT^d)^3}\frac{\Phi^{\frac{1}{2}}(\rho^1)\Phi^\frac{1}{2}(\rho^2)}{\Phi'(\rho^2)}\nabla_x\Phi^\frac{1}{2}(\rho^1)\cdot \nabla_{x'}\Phi^\frac{1}{2}(\rho^2)\overline{\kappa}^{\ve,\d}_{1,t}\overline{\kappa}^{\ve,\d}_{2,t}\zeta^M
\\ &  +\int_{\R^2}\int_{(\TT^d)^3}p^1_t\kappa^{\ve,\d}_1\left(\overline{\kappa}^{\ve,\d}_{2,t}-\kappa^{\ve,\d}(x',y,0,\eta)\right)\zeta^M
\\ & -2\int_{\R}\int_{(\TT^d)^3}g(x,t)\frac{\Phi(\rho^1)}{\Phi'(\rho^1)}\cdot \nabla_x\Phi^\frac{1}{2}(\rho^1)\overline{\kappa}^{\ve,\d}_{1,t}\left(\overline{\kappa}^{\ve,\d}_{2,t}-\kappa^{\ve,\d}(x',y,0,\eta)\right)\zeta^M
\\ & +2\int_{\R}\int_{(\TT^d)^3}g(x,t)\frac{\Phi^\frac{1}{2}(\rho^1)\Phi^\frac{1}{2}(\rho^2)}{\Phi'(\rho^2)}\cdot \nabla_{x'}\Phi^\frac{1}{2}(\rho^2)\overline{\kappa}^{\ve,\d}_{1,t}\overline{\kappa}^{\ve,\d}_{2,t}\zeta^M
\\ & +2\int_{\R}\int_{(\TT^d)^2}g(x,t)\frac{\Phi(\rho^1)}{\Phi'(\rho^1)}\cdot \nabla_x\Phi^\frac{1}{2}(\rho^1)\overline{\kappa}^{\ve,\d}_{1,t}\chi^{2,\ve,\d}_t\partial_\eta\zeta^M-\int_{\R^2}\int_{(\TT^d)^2}p^1_t\kappa^{\ve,\d}_1\chi^{2,\ve,\d}_t\partial_\eta\zeta^M.
\end{aligned}\end{equation}
We obtain an identical formula for $\mathbb{I}^{\ve,\d,M}_{t,2,\textrm{mix}}$, after swapping the roles of $i\in\{1,2\}$, $x,x'\in\TT^d$, and $\xi,\xi'\in\R$.  Therefore, after returning to \eqref{su_011},  this completes the initial analysis of the mixed term.

\textbf{The full derivative.}  We will decompose the full derivative of \eqref{su_0007} defined by \eqref{su_07},  \eqref{su_11}, \eqref{su_011}, and \eqref{su_18} into the four terms
\begin{equation}\label{su_24}\partial_t\mathbb{I}^{\ve,\d,M}_t = \partial_t\mathbb{I}^{\ve,\d,M}_{t,\textrm{par}}+\partial_t\mathbb{I}^{\ve,\d,M}_{t,\textrm{hyp}}+ \partial_t\mathbb{I}^{\ve,\d,M}_{t,\textrm{con}}+\partial_t\mathbb{I}^{\ve,\d,M}_{t,\textrm{vel}},\end{equation}
defined by the parabolic term
\begin{equation}\label{su_21}\begin{aligned}
\partial_t\mathbb{I}^{\ve,\d,M}_{t,\textrm{par}} & =  8\int_{\R}\int_{(\TT^d)^3}\Phi^{\frac{1}{2}}(\rho^1)\Phi^\frac{1}{2}(\rho^2)\left(\frac{1}{\Phi'(\rho^1)}+\frac{1}{\Phi'(\rho^2)}\right)\nabla_x\Phi^\frac{1}{2}(\rho^1)\cdot \nabla_{x'}\Phi^\frac{1}{2}(\rho^2)\overline{\kappa}^{\ve,\d}_{1,t}\overline{\kappa}^{\ve,\d}_{2,t}\zeta^M
\\ &\quad  -2\int_{\R^2}\int_{(\TT^d)^3}\left(p^1_t\kappa^{\ve,\d}_1\overline{\kappa}^{\ve,\d}_{2,t}+p^2_t\kappa^{\ve,\d}_2\overline{\kappa}^{\ve,\d}_{1,t}\right)\zeta^M,
\end{aligned}\end{equation}
the hyperbolic term
\begin{equation}\label{su_22}\begin{aligned}
\partial_t\mathbb{I}^{\ve,\d,M}_{t,\textrm{hyp}} & =  4\int_\R\int_{(\TT^d)^3}g(x,t)\frac{\Phi^\frac{1}{2}(\rho^1)}{\Phi'(\rho^1)}\left(\Phi^\frac{1}{2}(\rho^1)-\Phi^\frac{1}{2}(\rho^2)\right)\cdot \nabla_x\Phi^\frac{1}{2}(\rho^1)\overline{\kappa}^{\ve,\d}_{1,t}\overline{\kappa}^{\ve,\d}_{2,t}\zeta^M
\\ & \quad +4\int_\R\int_{(\TT^d)^3}g(x',t)\frac{\Phi^\frac{1}{2}(\rho^2)}{\Phi'(\rho^2)}\left(\Phi^\frac{1}{2}(\rho^2)-\Phi^\frac{1}{2}(\rho^1)\right)\cdot \nabla_{x'}\Phi^\frac{1}{2}(\rho^2)\overline{\kappa}^{\ve,\d}_{1,t}\overline{\kappa}^{\ve,\d}_{2,t}\zeta^M,
\end{aligned}\end{equation}
the term involving the control
\begin{equation}\label{su_022}\begin{aligned}
\partial_t\mathbb{I}^{\ve,\d,M}_{t,\textrm{con}} & =  4\int_{\R}\int_{(\TT^d)^3}\left(g(x,t)-g(x',t)\right)\frac{\Phi^\frac{1}{2}(\rho^1)\Phi^\frac{1}{2}(\rho^2)}{\Phi'(\rho^1)}\cdot \nabla_x\Phi^\frac{1}{2}(\rho^1)\overline{\kappa}^{\ve,\d}_{1,t}\overline{\kappa}^{\ve,\d}_{2,t}\zeta^M
\\ & \quad +4\int_{\R}\int_{(\TT^d)^3}\left(g(x',t)-g(x,t)\right)\frac{\Phi^\frac{1}{2}(\rho^1)\Phi^\frac{1}{2}(\rho^2)}{\Phi'(\rho^2)}\cdot \nabla_{x'}\Phi^\frac{1}{2}(\rho^2)\overline{\kappa}^{\ve,\d}_{1,t}\overline{\kappa}^{\ve,\d}_{2,t}\zeta^M,
\end{aligned}\end{equation}
and the term defined by the cutoff
\begin{equation}\label{su_23} \begin{aligned}
\partial_t\mathbb{I}^{\ve,\d,M}_{t,\textrm{vel}} & =  2\int_{\R}\int_{(\TT^d)^2}g(x,t)\frac{\Phi(\rho^1)}{\Phi'(\rho^1)}\cdot \nabla_x\Phi^\frac{1}{2}(\rho^1)\overline{\kappa}^{\ve,\d}_{1,t}\left(\sgn^\delta-2\chi^{2,\ve,\d}_t\right)\partial_\eta\zeta^M
\\ & \quad + 2\int_{\R}\int_{(\TT^d)^2}g(x',t)\frac{\Phi(\rho^2)}{\Phi'(\rho^2)}\cdot \nabla_{x'}\Phi^\frac{1}{2}(\rho^2)\overline{\kappa}^{\ve,\d}_{2,t}\left(\sgn^\delta-2\chi^{1,\ve,\d}_t\right)\partial_\eta\zeta^M
\\ & \quad + \int_{\R^2}\int_{(\TT^d)^2}p^1_t\kappa^{\ve,\d}_1\left(2\chi^{2,\ve,\d}_t-\sgn^\delta\right)\partial_\eta\zeta^M
\\ & \quad + \int_{\R^2}\int_{(\TT^d)^2}p^2_t\kappa^{\ve,\d}_2\left(2\chi^{1,\ve,\d}_t-\sgn^\delta\right)\partial_\eta\zeta^M.
\end{aligned}\end{equation}
The four terms on the righthand side of \eqref{su_24} will be handled separately.

\textbf{The parabolic terms.}  After adding and subtracting $2(\Phi'(\rho^1)\Phi'(\rho^2))^{-\nicefrac{1}{2}}$ and using the identity
\[\frac{1}{\Phi'(\rho^1)}+\frac{1}{\Phi'(\rho^2)}-\frac{2}{\Phi'(\rho^1)^\frac{1}{2}\Phi'(\rho^2)^\frac{1}{2}}=\frac{\left(\Phi'(\rho^1)^\frac{1}{2}-\Phi'(\rho^2)^\frac{1}{2}\right)^2}{\Phi'(\rho^1)\Phi'(\rho^2)},\]
the parabolic term defined in \eqref{su_21} satisfies
\begin{equation}\label{su_25}\begin{aligned}
\partial_t\mathbb{I}^{\ve,\d,M}_{t,\textrm{par}} & = 8\int_{\R}\int_{(\TT^d)^3}\frac{\Phi^{\frac{1}{2}}(\rho^1)\Phi^\frac{1}{2}(\rho^2)}{\Phi'(\rho^1)\Phi'(\rho^2)}\left(\Phi'(\rho^1)^\frac{1}{2}-\Phi'(\rho^2)^\frac{1}{2}\right)^2\nabla_x\Phi^\frac{1}{2}(\rho^1)\cdot \nabla_{x'}\Phi^\frac{1}{2}(\rho^2)\overline{\kappa}^{\ve,\d}_{1,t}\overline{\kappa}^{\ve,\d}_{2,t}\zeta^M
\\ & \quad + 16\int_\R\int_{(\TT^d)^3}\frac{\Phi^{\frac{1}{2}}(\rho^1)\Phi^\frac{1}{2}(\rho^2)}{\Phi'(\rho^1)^\frac{1}{2}\Phi'(\rho^2)^\frac{1}{2}}\nabla_x\Phi^\frac{1}{2}(\rho^1)\cdot \nabla_{x'}\Phi^\frac{1}{2}(\rho^2)\overline{\kappa}^{\ve,\d}_{1,t}\overline{\kappa}^{\ve,\d}_{2,t}\zeta^M
\\ & \quad -2\int_{\R^2}\int_{(\TT^d)^3}\left(p^1_t\kappa^{\ve,\d}_1\overline{\kappa}^{\ve,\d}_{2,t}+p^2_t\kappa^{\ve,\d}_2\overline{\kappa}^{\ve,\d}_{1,t}\right)\zeta^M.
\end{aligned}\end{equation}
The definition of the parabolic defect measures $\{p^i\}_{i\in\{1,2\}}$, H\"older's inequality, and Young's inequality prove that
\begin{equation}\label{su_26}\begin{aligned}
& 16\int_\R\int_{(\TT^d)^3}\frac{\Phi^{\frac{1}{2}}(\rho^1)\Phi^\frac{1}{2}(\rho^2)}{\Phi'(\rho^1)^\frac{1}{2}\Phi'(\rho^2)^\frac{1}{2}}\nabla_x\Phi^\frac{1}{2}(\rho^1)\cdot \nabla_{x'}\Phi^\frac{1}{2}(\rho^2)\overline{\kappa}^{\ve,\d}_{1,t}\overline{\kappa}^{\ve,\d}_{2,t}\zeta^M
\\ & \leq 2\int_{\R^2}\int_{(\TT^d)^3}\left(p^1_t\kappa^{\ve,\d}_1\overline{\kappa}^{\ve,\d}_{2,t}+p^2_t\kappa^{\ve,\d}_2\overline{\kappa}^{\ve,\d}_{1,t}\right)\zeta^M.
\end{aligned}\end{equation}
It follows from the definition of the convolution kernels, the local $\nicefrac{1}{2}$-H\"older continuity of $\Phi'$, the positivity of $\Phi'$ on compact subsets of $(0,\infty)$, \eqref{su_25}, and \eqref{su_26} that there exists $c\in(0,\infty)$ depending on $M\in(0,\infty)$ but independent of $\ve,\delta\in(0,1)$ such that
\[\partial_t\mathbb{I}^{\ve,\d,M}_{t,\textrm{par}}\leq c\delta \int_\R\int_{(\TT^d)^3}\mathbf{1}_{\{0<\abs{\rho^1(x,t)-\rho^2(x',t)}\leq c\delta\}}\abs{\nabla_x\Phi^\frac{1}{2}(\rho^1)}\abs{\nabla_x\Phi^\frac{1}{2}(\rho^2)}\overline{\kappa}^{\ve,\d}_{1,t}\overline{\kappa}^{\ve,\d}_{2,t}\zeta^M.\]
The definition of the convolution kernels, the definition of $\zeta^M$, H\"older's inequality, and Young's inequality prove that there exists $c\in(0,\infty)$ depending on $M\in(0,\infty)$ which satisfies for every $\delta\in(0,1)$ that
\[\limsup_{\ve\rightarrow 0}\left(\partial_t\mathbb{I}^{\ve,\d,M}_{t,\textrm{par}}\right)\leq c\int_{\left\{\substack{0<\abs{\rho^1-\rho^2}\leq c\delta\\ \abs{\rho_1},\abs{\rho_2}\leq M+1+\delta}\right\}}\abs{\nabla_x\Phi^\frac{1}{2}(\rho^1)}^2+\abs{\nabla_x\Phi^\frac{1}{2}(\rho^2)}^2\dx.\]
The fact that for each $i\in\{1,2\}$ we have $\Phi^\frac{1}{2}(\rho^i)\in L^2([0,T];H^1(\TT^d))$ and the dominated convergence theorem then imply for almost every $t\in(0,T]$ that
\begin{equation}\label{su_27}\limsup_{\d\rightarrow 0}\left(\limsup_{\ve\rightarrow 0}\left(\partial_t\mathbb{I}^{\ve,\d,M}_{t,\textrm{par}}\right)\right)\leq 0,\end{equation}
which completes the analysis of the parabolic terms.

\textbf{The hyperbolic terms.}  The definition of the hyperbolic terms \eqref{su_22}, the definition of the convolution kernels, the local Lipschitz continuity of $\Phi$, and the positivity of $\Phi'$ on every compact subset of $(0,\infty)$ prove that there exists $c\in(0,\infty)$ depending on $M\in(0,\infty)$ but independent of $\ve,\d\in(0,1)$ so that
\[\begin{aligned}
& \partial_t\mathbb{I}^{\ve,\d,M}_{t,\textrm{hyp}}
\\ & \leq c\delta\int_\R\int_{(\TT^d)^3}\mathbf{1}_{\{0<\abs{\rho^1(x,t)-\rho^2(x',t)}\leq c\delta\}}\left(\abs{g(x,t)}\abs{\nabla_x\Phi^\frac{1}{2}(\rho^1)}+\abs{g(x',t)}\abs{\nabla_{x'}\Phi^\frac{1}{2}(\rho^2)}\right)\overline{\kappa}^{\ve,\d}_{1,t}\overline{\kappa}^{\ve,\d}_{2,t}\zeta^M.
\end{aligned}\]
The definition of the convolution kernels, the definition of $\zeta^M$, H\"older's inequality, and Young's inequality prove that there exists $c\in(0,\infty)$ depending on $M\in(0,\infty)$ such that, for every $\delta\in(0,1)$,
\[\limsup_{\ve\rightarrow 0}\left(\partial_t\mathbb{I}^{\ve,\d,M}_{t,\textrm{hyp}}\right)\leq c\int_{\{0<\abs{\rho^1-\rho^2}\leq c\delta\}}\abs{g(x,t)}^2+\abs{\nabla_x\Phi^\frac{1}{2}(\rho^1)}^2+\abs{\nabla_x\Phi^\frac{1}{2}(\rho^2)}^2\dx.\]
Since $g\in L^2(\TT^d\times[0,T];\R^d)$ and since for each $i\in\{1,2\}$ we have $\Phi^\frac{1}{2}(\rho^i)\in L^2([0,T];H^1(\TT^d))$, the dominated convergence theorem proves that, for almost every $t\in(0,T]$,
\begin{equation}\label{su_28}\limsup_{\delta\rightarrow 0}\left(\limsup_{\ve\rightarrow 0}\left(\partial_t\mathbb{I}^{\ve,\d,M}_{t,\textrm{hyp}}\right)\right)\leq 0,\end{equation}
which completes the analysis of the hyperbolic terms.

\textbf{The control terms.}  The definition of the convolution kernel, the definition of $\zeta^M$, the continuity of $\Phi$, and the positivity of $\Phi'$ on $(0,\infty)$ prove that there exists $c\in(0,\infty)$ depending on $M\in(0,\infty)$ but independent of $\ve,\d\in(0,1)$ such that \eqref{su_022} satisfies
\[\partial_t\mathbb{I}^{\ve,\d,M}_{t,\textrm{con}} \leq c\int_{\R}\int_{(\TT^d)^3}\abs{g(x',t)-g(x,t)}\left(\abs{\nabla_x\Phi^\frac{1}{2}(\rho^1)}+\abs{\nabla_{x'}\Phi^\frac{1}{2}(\rho^2)}\right)\overline{\kappa}^{\ve,\d}_{1,t}\overline{\kappa}^{\ve,\d}_{2,t}\zeta^M. \]
Since $g\in L^2(\TT^d\times[0,T];\R^d)$ and since for each $i\in\{1,2\}$ we have that $\Phi^\frac{1}{2}(\rho^i)\in L^2([0,T];H^1(\TT^d))$, for almost every $t\in(0,T)$, for every $M\in(0,\infty)$ and $\delta\in(0,1)$,
\[\limsup_{\ve\rightarrow 0}\left( \partial_t\mathbb{I}^{\ve,\d,M}_{t,\textrm{con}} \right)\leq 0,\]
and, therefore, for almost every $t\in(0,T)$, for every $M\in(0,\infty)$ and $\delta\in(0,1)$,
\begin{equation}\label{su_29}\limsup_{\d\rightarrow 0}\left(\limsup_{\ve\rightarrow 0}\left( \partial_t\mathbb{I}^{\ve,\d,M}_{t,\textrm{con}}\right) \right)\leq 0,\end{equation}
which completes the analysis of the control terms.

\textbf{The cutoff in velocity.}  For every $\delta\in(0,1)$ let $\kappa^{v,\d}\colon\R\rightarrow\R$ be defined by
\[\kappa^{v,\d}(\xi)=\delta^{-1}\kappa^v\left(\frac{\xi}{\delta}\right).\]
The definition of the cutoff in velocity \eqref{su_23}, the definition of the convolution kernels, the inequality
\[\abs{2\chi^{i,\ve,\d}-\sgn^\delta}\leq 1,\]
and the definition of $\zeta^M$ prove that, for every $M\in(0,\infty)$,
\[\begin{aligned}
\limsup_{\ve \rightarrow 0}\left(\partial_t\mathbb{I}^{\ve,\d,M}_{t,\textrm{vel}}\right) & \leq 2\int_{\R}\int_{\TT^d}\abs{g(y,t)}\frac{\Phi(\rho^1)}{\Phi'(\rho^1)}\abs{\nabla_x\Phi^\frac{1}{2}(\rho^1)}\kappa^{v,\d}(\rho^1-\eta)\abs{\partial_\eta\zeta^M} 
\\ & \quad  + 2\int_{\R}\int_{\TT^d}\abs{g(y,t)}\frac{\Phi(\rho^2)}{\Phi'(\rho^2)}\abs{\nabla_{x'}\Phi^\frac{1}{2}(\rho^2)}\kappa^{v,\d}(\rho^2-\eta)\abs{\partial_\eta\zeta^M}
\\ & \quad + \int_{\R^2}\int_{\TT^d}p^1_t\kappa^{v,\d}(\rho^1-\eta)\abs{\partial_\eta\zeta^M} + \int_{\R^2}\int_{\TT^d}p^2_t\kappa^{v,\d}(\rho^2-\eta)\abs{\partial_\eta\zeta^M}.
\end{aligned} \]
The definition of the convolution kernel, the definition of $\zeta^M$, $\Phi(0)=0$, and \eqref{skel_continuity_3} imply that there exists $c\in(0,\infty)$ such that, for every $M\in(0,\infty)$,
\begin{equation}\label{su_30}\begin{aligned}
& \limsup_{\d\rightarrow 0}\left(\limsup_{\ve\rightarrow 0}\left(\mathbb{I}^{\ve,\d,M}_{t,\textrm{vel}}\right)\right) 
\\ & \leq c(M+1)\int_{\{M\leq \rho^1\leq M+1\}}\abs{g(y,t)}\abs{\nabla_x\Phi^\frac{1}{2}(\rho^1)}+c\int_{\{0<\rho^1\leq\frac{2}{M}\}}\abs{g(y,t)}\abs{\nabla_x\Phi^\frac{1}{2}(\rho^1)}
\\ & \quad + c(M+1)\int_{\{M\leq \rho^2\leq M+1\}}\abs{g(y,t)}\abs{\nabla_x\Phi^\frac{1}{2}(\rho^2)}+c\int_{\{0<\rho^2\leq\frac{2}{M}\}}\abs{g(y,t)}\abs{\nabla_x\Phi^\frac{1}{2}(\rho^2)}
\\ & \quad + \int_\R\int_{\{M\leq \rho^1\leq M+1\}}p^1_t\dy\deta + \int_\R\int_{\{M\leq \rho^2\leq M+1\}}p^2_t\dy\deta
\\ & \quad + M\int_\R\int_{\{0< \rho^1\leq \frac{2}{M}\}}p^1_t\dy\deta + M\int_\R\int_{\{0< \rho^2\leq \frac{2}{M}\}}p^2_t\dy\deta.
\end{aligned}\end{equation}
The definition of the parabolic defect measures, $\Phi(0)=0$, and \eqref{skel_continuity_3} prove that there exists $c\in(0,\infty)$ satisfying for every $i\in\{1,2\}$ that
\begin{equation}\label{su_030} \begin{aligned} & \int_\R\int_{\{M\leq \rho^i\leq M+1\}}p^i_t\dy\deta + M\int_\R\int_{\{0< \rho^i\leq \frac{2}{M}\}}p^i_t\dy\deta \\ &  \leq  c(M+1)\int_\R\int_{\{M\leq \rho^i\leq M+1\}}\abs{\nabla\Phi^\frac{1}{2}(\rho^i)}^2+c\int_{\{0<\rho^i<\frac{2}{M}\}}\abs{\nabla\Phi^\frac{1}{2}(\rho^i)}^2. \end{aligned}\end{equation}
Therefore, it follows from H\"older's inequality, Young's inequality, \eqref{su_30}, and \eqref{su_030} that there exists $c\in(0,\infty)$ such that, for every $M\in(0,\infty)$,
\begin{equation}\label{su_0300}\begin{aligned}
& \limsup_{\d\rightarrow 0}\left(\limsup_{\ve\rightarrow 0}\left(\mathbb{I}^{\ve,\d,M}_{t,\textrm{vel}}\right)\right) 
\\ & \leq c\sum_{i=1}^2\left((M+1)\int_{\{M\leq \rho^i\leq M+1\}}\abs{g(y,t)}^2+\int_{\{0<\rho^i\leq\frac{2}{M}\}}\abs{g(y,t)}^2\right)
\\ & \quad + c\sum_{i=1}^2\left((M+1)\int_{\{M\leq \rho^i\leq M+1\}}\abs{\nabla_x\Phi^\frac{1}{2}(\rho^i)}^2+\int_{\{0<\rho^i\leq\frac{2}{M}\}}\abs{\nabla_x\Phi^\frac{1}{2}(\rho^i)}^2\right),
\end{aligned}\end{equation}
which completes the analysis of the cutoff in velocity.

\textbf{The conclusion.}  Returning to \eqref{su_0}, it follows from \eqref{su_27}, \eqref{su_28}, \eqref{su_29}, and \eqref{su_0300} that there exists $c\in(0,\infty)$ which satisfies for every $M\in(0,\infty)$ that
\[\begin{aligned}
& \norm{\abs{\chi^1-\chi^2}^2\zeta^M}_{L^\infty([0,T];L^1(\TT^d\times\R))}  \leq \norm{\abs{\overline{\chi}(\rho^1_0)-\overline{\chi}(\rho^2_0)}\zeta^M}_{L^1(\TT^d\times\R)}
\\ & \quad +c\sum_{i=1}^2\left((M+1)\int_0^T\int_{\{M\leq \rho^i\leq M+1\}}\abs{g(y,t)}^2+\int_0^T\int_{\{0<\rho^i\leq\frac{2}{M}\}}\abs{g(y,t)}^2\right)
\\ & \quad + c\sum_{i=1}^2\left((M+1)\int_0^T\int_{\{M\leq \rho^i\leq M+1\}}\abs{\nabla_x\Phi^\frac{1}{2}(\rho^i)}^2+\int_0^T\int_{\{0<\rho^i\leq\frac{2}{M}\}}\abs{\nabla_x\Phi^\frac{1}{2}(\rho^i)}^2\right).
\end{aligned}\]
The monotone convergence theorem, the dominated convergence theorem, and properties of the kinetic function prove that
\begin{equation}\label{su_31}\begin{aligned}
& \norm{\rho^1-\rho^2}_{L^\infty([0,T];L^1(\TT^d))} \leq   \norm{\rho^1_0-\rho^2_0}_{L^1(\TT^d)}
\\ &\quad  + \liminf_{M\rightarrow\infty}\left(c\sum_{i=1}^2\left((M+1)\int_0^T\int_{\{M\leq \rho^i\leq M+1\}}\abs{g(y,t)}^2+\abs{\nabla_x\Phi^\frac{1}{2}(\rho^i)}^2\dy\dt\right)\right).
\end{aligned}\end{equation}
After defining $\{f_i\}_{i\in\{1,2\}}\subseteq L^1(\TT^d\times[0,T])$ by
\[\begin{aligned}
f_1 & =\abs{g(y,t)}^2+\abs{\nabla_x\Phi^\frac{1}{2}(\rho^1)}^2 \;\;\textrm{and}\;\; f_2 & =\abs{g(y,t)}^2+\abs{\nabla_x\Phi^\frac{1}{2}(\rho^2)}^2,
\end{aligned}\]
and after defining the measurable subsets $\{A_{n,i}\}_{i\in\{1,2,3,4\},n\in\mathbb{N}\cup\{0\}}\subseteq\mathcal{B}(\TT^d\times(0,T))$, for every $n\in\mathbb{N}$,
\[\begin{aligned}
A_{n,1} & =\{(x,t)\in\TT^d\times(0,T)\colon 2n\leq \rho^1(x,t)\leq 2n+1\},
\\ A_{n,2} & =\{(x,t)\in\TT^d\times(0,T)\colon 2n+1\leq \rho^1(x,t)\leq 2n+2\},
\\ A_{n,3} & =\{(x,t)\in\TT^d\times(0,T)\colon 2n\leq \rho^2(x,t)\leq 2n+1\},
\\ A_{n,2} & =\{(x,t)\in\TT^d\times(0,T)\colon 2n+1\leq \rho^2(x,t)\leq 2n+2\},
\end{aligned}\]
it follows from Lemma~\ref{lem_partition} that
\begin{equation}\label{su_0000031} \liminf_{M\rightarrow\infty}\left(c\sum_{i=1}^2\left((M+1)\int_0^T\int_{\{M\leq \rho^i\leq M+1\}}\abs{g(y,t)}^2+\abs{\nabla_x\Phi^\frac{1}{2}(\rho^i)}^2\dy\dt\right)\right)= 0.\end{equation}
Returning to \eqref{su_31}, we conclude from \eqref{su_0000031} that
\[\norm{\rho^1-\rho^2}_{L^\infty([0,T];L^1(\TT^d))}\leq  \norm{\rho^1_0-\rho^2_0}_{L^1(\TT^d)},\]
which completes the proof.  \end{proof}

\begin{prop}\label{L1-continuity}  Let $T\in(0,\infty)$, let $\Phi\in\C([0,\infty))\cap\C^1_{\textrm{loc}}((0,\infty))$ satisfy Assumption~\ref{as_unique}, let $g\in L^2(\TT^d\times [0,T];\R^d)$, let $\rho_0\in \Ent_\Phi(\TT^d)$, and let $\rho\in L^1([0,T];L^1(\TT^d))$ be a renormalized kinetic solution of \eqref{skel_eq} in the sense of Definition~\ref{skel_sol_def}.  Then $\rho$ has a representative in $\C([0,T];L^1(\TT^d))$.  \end{prop}

\begin{proof}  The proof is a deterministic and simplified version of the proof of the analogous fact in \cite[Theorem~5.29]{FehGes21}.  We therefore only outline the main details.  In \cite[Equation~(5.65)]{FehGes21}, we first identify weakly right- and left-continuous representatives $\chi^{\pm}$ of the kinetic function $\chi$ of $\rho$ using the equation and properties of the kinetic measure.  We then use the equation and the uniqueness proof of Theorem~\ref{thm_unique} in \cite[Equations~(5.66), (5.67), and (5.68)]{FehGes21} to prove that the $\chi^{\pm}$ are kinetic functions in the sense that $\chi^+(x,\xi,t)=\mathbf{1}_{\{0<\xi<\rho^+(x,t)\}}$ and $\chi^-(x,\xi,t)=\mathbf{1}_{\{0<\xi<\rho^-(x,t)\}}$ for representatives $\rho^{\pm}$ of $\rho$ in $L^1([0,T];L^1(\TT^d))$.  Finally, using the definition of the parabolic defect measure and the optimal regularity estimate \eqref{optimal_regularity_sol}, we prove in \cite[Equation~(5.70)]{FehGes21} that $\rho^+=\rho^-$ is strongly $L^1(\TT^d)$-continuous in time, which completes the proof.\end{proof}

\section{Equivalence of renormalized kinetic solutions and weak solutions}\label{sec_equiv}  In this section, we will prove in Theorem~\ref{equiv} that, for nonlinearities $\Phi$ satisfying the additional Assumption~\ref{as_equiv}, renormalized kinetic solutions (see Definition~\ref{skel_sol_def}) are equivalent to classical weak solutions (see Definition~\ref{classical_weak} below).  Assumption~\ref{as_equiv} is broken into two cases, and for the model example $\Phi(\xi)=\xi^m$ an explicit computation proves that case (i) applies to $m\in [1,2]$ and case (ii) applies to $m\in[2,\infty)$.  The equivalence of renormalized kinetic solutions and classical weak solutions is used in Proposition~\ref{weak_strong} below to prove that a weakly convergent sequence of controls $g_n$ induces a strongly convergence sequence of solutions $\rho_n$.  This fact is not obvious for renormalized kinetic solutions, since the second term on the righthand side of \eqref{skel_sol_def_1} will contain what is in general the product between a weakly convergent gradient and weakly convergent control.

\begin{assumption}\label{as_equiv} Let $\Phi\in\C([0,\infty))\cap\C^1_{\textrm{loc}}((0,\infty))$ satisfy Assumption~\ref{as_unique}.  Assume that $\Phi$ satisfies one of the following two conditions.
\begin{enumerate}[(i)]
\item We have that $\Phi^\frac{1}{2}\colon[0,\infty)\rightarrow[0,\infty)$ is concave, and there exists $c\in(0,\infty)$ and $p\in[2,\infty)$ such that, for every $\xi\in[0,\infty)$,
\begin{equation}\label{skel_continuity_5} \left(\frac{\Phi^\frac{1}{2}(\xi)}{\Phi'(\xi)}\right)^p\leq c\left(\xi+1\right).\end{equation}
\end{enumerate}
\begin{enumerate}[(ii)]
\item We have that $\Phi^\frac{1}{2}\colon[0,\infty)\rightarrow[0,\infty)$ is convex, there exists $c\in(0,\infty)$ such that
\begin{equation}\label{skel_continuity_6} \esssup_{\{\xi\geq 1\}}\abs{\frac{\Phi(\xi+1)}{\Phi(\xi)}}\leq c,\end{equation}
and for every $M\in(0,1)$ there exists $c\in(0,\infty)$ such that
\begin{equation}\label{skel_continuity_7} \esssup_{\{\xi\geq M\}}\abs{\frac{\Phi^\frac{1}{2}(\xi)}{\Phi'(\xi)}}\leq c.\end{equation}
\end{enumerate}
\end{assumption}

\begin{definition}\label{classical_weak}  Let $T\in(0,\infty)$, let $\Phi\in\C([0,\infty))\cap\C^1_{\textrm{loc}}((0,\infty))$, let $g\in L^2(\TT^d\times[0,T];\R^d)$, and let $\rho_0\in\Ent_\Phi(\TT^d)$.  A function $\rho\in L^\infty([0,T];L^1(\TT^d))$ is a \emph{weak solution} of \eqref{skel_eq} with initial data $\rho_0$ if $\rho$ satisfies the following two properties.
\begin{enumerate}[(i)]
\item We have $\Phi^\frac{1}{2}(\rho)\in L^2([0,T];H^1(\TT^d))$.
\item There exists a subset $\mathcal{N}\subseteq(0,T]$ of Lebesgue measure zero such that, for every $t\in[0,T]\setminus\mathcal{N}$, for every $\psi\in \C^\infty(\TT^d)$,
\begin{equation}\begin{aligned}\label{equiv_2}\int_{\TT^d}\rho(x,t)\psi(x)\dx & =-\int_0^t\int_{\TT^d}2\Phi^\frac{1}{2}(\rho)\nabla\Phi^\frac{1}{2}(\rho)\cdot\nabla \psi \dx\dt+\int_0^t\int_{\TT^d}g\Phi^\frac{1}{2}(\rho)\cdot \nabla \psi \dx\dt  \\ & \quad +\int_{\TT^d}\rho_0(x)\psi(x)\dx.\end{aligned}\end{equation}
\end{enumerate}
\end{definition}

\begin{thm}\label{equiv}  Let $T\in(0,\infty)$, let $\Phi\in\C([0,\infty))\cap\C^1_{\textrm{loc}}((0,\infty))$ satisfy Assumptions~\ref{as_unique} and \ref{as_equiv}, let $g\in L^2(\TT^d\times[0,T];\R^d)$, let $\rho_0\in \Ent_\Phi(\TT^d)$, and let $\rho\in L^\infty([0,T];L^1(\TT^d))$.  Then, $\rho$ is a renormalized kinetic solution (see Definition~\ref{skel_sol_def}) with control $g$ and initial data $\rho_0$ if and only if $\rho$ is a weak solution (see Definition~\ref{classical_weak}) with control $g$ and initial data $\rho_0$.
\end{thm}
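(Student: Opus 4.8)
The plan is to establish the two implications separately: the passage from a renormalized kinetic solution to a weak solution is straightforward, while the converse is the genuine renormalization statement and is where the work lies.

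\textbf{From renormalized kinetic solution to weak solution.} Given $\theta\in\C^\infty(\TT^d)$, for $R\in(1,\infty)$ I would choose $\zeta_R\in\C^\infty_c(\R)$ with $\zeta_R\equiv1$ on $[-1,R]$, supported in $[-2,R+1]$, and with $\norm{\zeta_R'}_{L^\infty}$ bounded uniformly in $R$, and test \eqref{skel_sol_def_1} with $\psi(x,\xi)=\theta(x)\zeta_R(\xi)$. Since $\chi(x,\xi,t)=\mathbf 1_{\{0<\xi<\rho(x,t)\}}$ and $\rho\in L^\infty([0,T];L^1(\TT^d))$, letting $R\to\infty$ the left side converges to $\int_{\TT^d}\rho(\cdot,t)\theta$, the initial term to $\int_{\TT^d}\rho_0\theta$, the term $\int\Phi'(\xi)\chi\Delta_x\psi$ to $\int_0^t\int_{\TT^d}\Phi(\rho)\Delta\theta=-\int_0^t\int_{\TT^d}2\Phi^\frac{1}{2}(\rho)\nabla\Phi^\frac{1}{2}(\rho)\cdot\nabla\theta$ (the integration by parts being legitimate because $\Phi^\frac{1}{2}(\rho)\in L^2([0,T];H^1(\TT^d))$ forces $\Phi(\rho)\in L^1([0,T];W^{1,1}(\TT^d))$), and the term $\int\Phi^\frac{1}{2}(\rho)g\cdot(\nabla_x\psi)(x,\rho)$ to $\int_0^t\int_{\TT^d}g\Phi^\frac{1}{2}(\rho)\cdot\nabla\theta$ by dominated convergence. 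The two remaining terms involve $(\partial_\xi\psi)(x,\rho)=\theta(x)\zeta_R'(\rho)$, hence are supported in $\{R\le\rho\le R+1\}$; by \eqref{skel_continuity_3} and the explicit form \eqref{parabolic_defect_measure} of $p$ they are dominated by $c(R+1)\int_0^T\int_{\{R\le\rho\le R+1\}}(\abs g^2+\abs{\nabla\Phi^\frac{1}{2}(\rho)}^2)\dx\dt$, which vanishes along a suitable sequence $R\to\infty$ by Lemma~\ref{lem_partition}. This yields \eqref{equiv_2}, and condition \eqref{equiv_1} coincides with \eqref{optimal_regularity_sol}.

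\textbf{From weak solution to renormalized kinetic solution.} A weak solution satisfies $\partial_t\rho=\nabla\cdot F$ in $\mathcal D'(\TT^d\times(0,T))$ with $F=2\Phi^\frac{1}{2}(\rho)\nabla\Phi^\frac{1}{2}(\rho)-\Phi^\frac{1}{2}(\rho)g\in L^1(\TT^d\times(0,T);\R^d)$. I would convolve in $x$ with $\kappa^\ve$, obtaining $\partial_t\rho^\ve=\nabla\cdot F^\ve$ with $\rho^\ve$ smooth in $x$ and absolutely continuous in $t$, so that the chain rule holds exactly: for $S$ with $S'\in\C^\infty_c(\R)$ and $S(0)=0$,
\[
\partial_tS(\rho^\ve)=\nabla\cdot(S'(\rho^\ve)F^\ve)-S''(\rho^\ve)\,\nabla\rho^\ve\cdot F^\ve,
\]
which, read against test functions of the product form $\theta(x)S'(\xi)$ (hence by density against all $\psi\in\C^\infty_c(\TT^d\times\R)$), is an approximate version of \eqref{skel_sol_def_1} for the kinetic function $\overline\chi(\rho^\ve,\cdot)$. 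The flux divergence passes to the limit from $F^\ve\to F$ in $L^1$ and $\rho^\ve\to\rho$ a.e., producing every term of \eqref{skel_sol_def_1} except the one with $p$. Everything then reduces to the quadratic term $S''(\rho^\ve)\nabla\rho^\ve\cdot F^\ve$, whose sought-for limit, through the identities $\nabla\Phi(\rho)=2\Phi^\frac{1}{2}(\rho)\nabla\Phi^\frac{1}{2}(\rho)$ and $\Phi^\frac{1}{2}(\rho)\nabla\rho=\frac{2\Phi(\rho)}{\Phi'(\rho)}\nabla\Phi^\frac{1}{2}(\rho)$, is
\[
S''(\rho)\,\frac{4\Phi(\rho)}{\Phi'(\rho)}\,\abs{\nabla\Phi^\frac{1}{2}(\rho)}^2-S''(\rho)\,\frac{2\Phi(\rho)}{\Phi'(\rho)}\,g\cdot\nabla\Phi^\frac{1}{2}(\rho),
\]
the first piece being precisely $-\int_\R p\,S''(\xi)\dxi$ for $p$ as in \eqref{parabolic_defect_measure}, and the second the $g\,\partial_\xi\psi$ contribution in \eqref{skel_sol_def_1}.

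\textbf{The key difficulty.} The convergence of $S''(\rho^\ve)\nabla\rho^\ve\cdot F^\ve$ to the displayed limit must be proved using only the regularity $\Phi^\frac{1}{2}(\rho)\in L^2([0,T];H^1(\TT^d))$ and \emph{not} any regularity of $\rho$, and this is the heart of the theorem: the extra commutator errors here, absent in the linear Fokker--Planck theory, are exactly what force Assumptions~\ref{as_unique} and \ref{as_equiv}. The bounds \eqref{skel_continuity}--\eqref{skel_continuity_3} and \eqref{skel_continuity_5}--\eqref{skel_continuity_7} place the rewritten products (in which the only spatial derivative present is $\nabla\Phi^\frac{1}{2}(\rho)$) in the correct Lebesgue spaces, while the concavity, respectively convexity, of $\Phi^\frac{1}{2}$ allows one, via Jensen's inequality, to compare $\Phi^\frac{1}{2}(\rho^\ve)$ with $(\Phi^\frac{1}{2}(\rho))^\ve$ in the absence of a Lipschitz bound on $\Phi^\frac{1}{2}$---this one-sided comparison playing the role of the Lipschitz control of the vector field in the DiPerna--Lions--Ambrosio argument. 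Once the Friedrichs-type commutators (differences of the type $\kappa^\ve*(fh)-(\kappa^\ve*f)h\to0$ in $L^1$ when the relevant factor carries a Sobolev derivative) are shown to vanish, one passes $\ve\to0$; because mollification converges strongly in $L^2([0,T];H^1(\TT^d))$, no additional nonnegative entropy defect measure can survive in the limit, so \eqref{skel_sol_def_1} holds with equality rather than an inequality. Identifying the initial trace and the exceptional null set of times is then routine.
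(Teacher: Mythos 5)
Your overall route—mollify in $x$, apply the chain rule, identify the limit of the quadratic term $S''(\rho^\ve)\nabla\rho^\ve\cdot F^\ve$, and invoke concavity/convexity of $\Phi^\frac{1}{2}$ via Jensen—matches the paper's proof, and your treatment of the easy implication (renormalized kinetic $\Rightarrow$ weak) via the cutoff $\zeta_R$ and Lemma~\ref{lem_partition} is in fact more careful than the paper's terse presentation of that direction, which silently tests with $\xi$-independent $\psi$. The difficulty lies entirely in the converse implication, and there your proposal has a genuine gap.

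You correctly write $\nabla\rho^\ve = \bigl(\tfrac{2\Phi^{1/2}(\rho)}{\Phi'(\rho)}\nabla\Phi^{1/2}(\rho)\bigr)^\ve$ and $F^\ve = \bigl(2\Phi^{1/2}(\rho)\nabla\Phi^{1/2}(\rho) - \Phi^{1/2}(\rho)g\bigr)^\ve$, but both mollified factors converge, as $\ve\to0$, only in $L^1(\TT^d\times[0,T])$, because $\Phi^{1/2}(\rho)\nabla\Phi^{1/2}(\rho)$ and $\Phi^{1/2}(\rho)g$ are products of two $L^2$-functions and nothing more. An $L^1$-convergent factor times an $L^1$-convergent factor gives no information in the limit—the product $\nabla\rho^\ve\cdot F^\ve$ is not even uniformly integrable a priori. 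Dismissing this as a "Friedrichs-type commutator when the relevant factor carries a Sobolev derivative" misses the obstruction: in DiPerna--Lions one factor is the bounded solution and the other the $W^{1,p}$ vector field, whereas here neither factor has any uniform bound, precisely because of the energy-criticality of the problem. This is what the paper's $A_0$/$A_1$ cutoff is for: on $A_0=\{\rho\leq M+1\}$ the weight $\Phi^{1/2}(\rho)$ becomes bounded and $(\Phi^{1/2}(\rho)g\,\mathbf 1_{A_0})^\ve\to\Phi^{1/2}(\rho)g\,\mathbf 1_{A_0}$ upgrades to \emph{strong $L^2$}-convergence, which is what actually passes to the limit against the $L^2$-factor $\nabla\Phi^{1/2}(\rho)$. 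The contribution from $A_1=\{\rho>M+1\}$ is then killed by a separate layered Chebyshev argument over the sets $A_k$ and $B^\ve_{k,x}$ (Lemmas~\ref{aux_lemma_1}, \ref{aux_lemma_2}), which is where the Jensen-inequality comparison you allude to actually enters. None of this structure is present in your proposal, and without it the passage to the limit in the quadratic term cannot be carried out. The decomposition and the layered tail estimate are the core inventions of the proof, not routine details, so this constitutes a substantive missing idea rather than an omitted computation.
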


\begin{proof}  The fact that Definition~\ref{skel_sol_def} implies Definition~\ref{classical_weak} is a straightforward consequence of the definitions.  We will therefore only prove that Definition~\ref{classical_weak} implies Definition~\ref{skel_sol_def}.  Assume that $\rho$ is a weak solution.  Let $\mathcal{N}\subset(0,T]$ denote the set of Lebesgue measure zero from \eqref{equiv_2}.  Let $\chi\colon\TT^d\times\R\times[0,T]\rightarrow\R$ denote the kinetic function of $\rho$ and let $p$ denote the parabolic defect measure.  For every $\ve\in(0,1)$ let $\kappa^\ve\colon\TT^d\rightarrow\R$ be a standard symmetric convolution kernel of scale $\ve\in(0,1)$ and let $\rho^\ve\colon\TT^d\times[0,T]\rightarrow\R$ be defined by $\rho^\ve=(\rho * \kappa ^\ve)$.  Let $\psi\in\C^\infty_c(\TT^d\times\R)$ and let $\Psi\colon\TT^d\times\R\rightarrow\R$ be defined by $\Psi(x,\xi)=\int_0^\xi\psi(x,\xi')\dxip$.  Then, for every $t\in[0,T]\setminus\mathcal{N}$,
\begin{equation}\label{equiv_04}\begin{aligned}  \left.\int_{\TT^d}\Psi(x,\rho^\ve(x,r))\dx\right|_{r=0}^{t} & = \int_0^t\int_{\TT^d}\partial_t\rho^\ve\psi(x,\rho^\ve)\dx\dr
\\ & = \int_0^t\int_{(\TT^d)^2}2\Phi^\frac{1}{2}(\rho(y,t))\nabla\Phi^\frac{1}{2}(\rho(y,t))\cdot \nabla_x\kappa^\ve(y-x)\psi(x,\rho^\ve(x,t))\dy\dx\dt
\\ & \quad - \int_0^t\int_{(\TT^d)^2}g(y,t)\Phi^\frac{1}{2}(\rho(y,t))\cdot \nabla_x\kappa^\ve(y-x)\psi(x,\rho^\ve(x,t))\dy\dx\dt.
\end{aligned}\end{equation}
The final two terms on the righthand side of \eqref{equiv_04} are treated identically.  Focusing on the final term of \eqref{equiv_04}, after integrating by parts in the $x$-variable,
\begin{equation}\label{equ_1}\begin{aligned}
& \int_0^t\int_{(\TT^d)^2}g(y,t)\Phi^\frac{1}{2}(\rho(y,t))\cdot \nabla_x\kappa^\ve(y-x)\psi(x,\rho^\ve(x,t))\dy\dx\dt
\\ & = -\int_0^t\int_{(\TT^d)^2}g(y,t)\Phi^\frac{1}{2}(\rho(y,t))\kappa^\ve(y-x)\cdot (\nabla_x\psi)(x,\rho^\ve(x,t))\dy\dx\dt
\\ & \quad -\int_0^t\int_{(\TT^d)^2}g(y,t)\Phi^\frac{1}{2}(\rho(y,t))\kappa^\ve(y-x)(\partial_\xi \psi)(x,\rho^\ve(x,t))\cdot \nabla_x \rho^\ve(x,t)\dy\dx\dt.
\end{aligned}\end{equation}
Since $\Phi^\frac{1}{2}(\rho(x,t))\in L^2([0,T];H^1(\TT^d))$ and $g(x,t)\in L^2(\TT^d\times[0,T];\R^d)$, it follows from $\psi\in\C^\infty_c(\TT^d\times\R)$, the triangle inequality, and the dominated convergence theorem that
\[\begin{aligned}
& \lim_{\ve\rightarrow 0}\left(\int_0^t\int_{(\TT^d)^2}g(y,t)\Phi^\frac{1}{2}(\rho(y,t))\kappa^\ve(y-x)\cdot (\nabla_x\psi)(x,\rho^\ve(x,t))\dy\dx\dt\right)
\\ & =\int_0^t\int_{\TT^d}g(x,t)\Phi^\frac{1}{2}(\rho(x,t))\cdot (\nabla_x\psi)(x,\rho(x,t))\dx\dt.
\end{aligned}\]
The convergence of the final term on the righthand side of \eqref{equ_1} is more difficult.  For this, we will effectively upgrade convergence in $L^1$ to convergence in $L^2$ by bounding the solution $\rho$.  Precisely, since $\psi\in\C^\infty_c(\TT^d\times\R)$ there exists $M\in(0,\infty)$ such that $\Supp(\psi)\subseteq \TT^d\times[-M,M]$, which implies that, for every $\ve\in(0,1)$, $0\leq \rho^\ve\leq M$ on $\Supp[\psi(\cdot,\rho^\ve(\cdot,\cdot))]$.  We will now transfer this bound for the convolution to a bound for the solution itself.  Let $A_0,A_1\subseteq \TT^d\times[0,T]$ be defined by
\begin{equation}\label{equ_005}A_1=\{(y,t)\in\TT^d\times[0,T]\colon \rho(y,t)\geq M+1\}\;\;\textrm{and}\;\;A_0=(\TT^d\times[0,T])\setminus A_1,\end{equation}
we will prove that, returning to the final term of \eqref{equiv_04},
\begin{equation}\label{equ_05}\limsup_{\ve\rightarrow 0}\abs{\int_0^t\int_{(\TT^d)^2}g(y,t)\Phi^\frac{1}{2}(\rho(y,t))\cdot \nabla_x\kappa^\ve(y-x)\psi(x,\rho^\ve(x,t))\mathbf{1}_{A_1}(y,t)\dy\dx\dt}=0.\end{equation}
It will then remain to characterize the limit, after integrating by parts in the $x$-variable,
\begin{equation}\label{equ_5}\begin{aligned}
& \lim_{\ve\rightarrow 0}\left(\int_0^t\int_{(\TT^d)^2}g(y,t)\Phi^\frac{1}{2}(\rho(y,t))\cdot \nabla_x\kappa^\ve(y-x)\psi(x,\rho^\ve(x,t))\mathbf{1}_{A_0}(y,t)\dy\dx\dt\right)
\\ &  =\lim_{\ve\rightarrow 0}\left(-\int_0^t\int_{(\TT^d)^2}g(y,t)\Phi^\frac{1}{2}(\rho(y,t))\kappa^\ve(y-x)\cdot (\nabla_x\psi)(x,\rho^\ve(x,t))\mathbf{1}_{A_0}(y,t)\dy\dx\dt\right.
\\ & \quad \quad \quad \quad \left.-\int_0^t\int_{(\TT^d)^2}g(y,t)\Phi^\frac{1}{2}(\rho(y,t))\kappa^\ve(y-x)(\partial_\xi \psi)(x,\rho^\ve(x,t))\cdot \nabla_x \rho^\ve(x,t)\mathbf{1}_{A_0}(y,t)\dy\dx\dt\right).
\end{aligned}\end{equation}
The point is that, on $A_0$ we have $0\leq \rho(y,t)\leq M+1$ and therefore, as $\ve\rightarrow 0$, strongly in $L^2(\TT^d\times[0,T];\R^d)$,
\[\left(\int_{\TT^d}g(y,t)\Phi^\frac{1}{2}(\rho(y,t))\mathbf{1}_{A_0}(y,t)\kappa^\ve(y-x)\dy\right)\rightarrow g(x,t)\Phi^\frac{1}{2}(\rho(x,t))\mathbf{1}_{A_0}(x,t).\]
The proof of \eqref{equ_05} and the characterization of the limit \eqref{equ_5} will be separated into the case that $\Phi^\frac{1}{2}$ satisfies \eqref{skel_continuity_5} and the case that $\Phi^\frac{1}{2}$ satisfies \eqref{skel_continuity_6} and \eqref{skel_continuity_7}.

\textbf{The case that $\Phi^\frac{1}{2}$ satisfies \eqref{skel_continuity_5}.}  Since $\psi\in\C^\infty_c(\TT^d\times\R)$ there exists $M\in(0,\infty)$ such that $\Supp(\psi)\subseteq \TT^d\times[-M,M]$.  In analogy with \eqref{equ_005}, for every $k\in\N$ we define $A_k\subseteq(\TT^d\times[0,T])$ by
\begin{equation}\label{equiv_49} A_k=\{(y,t)\in\TT^d\times[0,T]\colon \Phi^\frac{1}{2}(\rho(y,t))\geq \Phi^\frac{1}{2}(M)+k\},\end{equation}
and let $A_0=(\TT^d\times[0,T])\setminus A_1$.  Returning to \eqref{equiv_04}, we form the decomposition
\begin{equation}\label{equiv_004}\begin{aligned} & \left.\int_{\TT^d}\Psi(x,\rho^\ve(x,r))\dx\right|_{r=0}^{t}
\\ & = \int_0^t\int_{(\TT^d)^2}2\Phi^\frac{1}{2}(\rho(y,t))\nabla\Phi^\frac{1}{2}(\rho(y,t))\cdot \nabla_x\kappa^\ve(y-x)\psi(x,\rho^\ve(x,t))\mathbf{1}_{A_1}(y,t)\dy\dx\dt
\\ & \quad - \int_0^t\int_{(\TT^d)^2}g(y,t)\Phi^\frac{1}{2}(\rho(y,t))\cdot \nabla_x\kappa^\ve(y-x)\psi(x,\rho^\ve(x,t))\mathbf{1}_{A_1}(y,t)\dy\dx\dt
\\ & \quad + \int_0^t\int_{(\TT^d)^2}2\Phi^\frac{1}{2}(\rho(y,t))\nabla\Phi^\frac{1}{2}(\rho(y,t))\cdot \nabla_x\kappa^\ve(y-x)\psi(x,\rho^\ve(x,t))\mathbf{1}_{A_0}(y,t)\dy\dx\dt
\\ & \quad - \int_0^t\int_{(\TT^d)^2}g(y,t)\Phi^\frac{1}{2}(\rho(y,t))\cdot \nabla_x\kappa^\ve(y-x)\psi(x,\rho^\ve(x,t))\mathbf{1}_{A_0}(y,t)\dy\dx\dt.
\end{aligned}\end{equation}
We will first prove that the first two terms on the righthand side of \eqref{equiv_004} vanish as $\ve\rightarrow 0$, which follows by choosing $F=g$ and $F=\nabla \Phi^\frac{1}{2}(\rho)$ in Lemma~\ref{aux_lemma_1} below.  It will remain to treat the final terms on the righthand side of \eqref{equiv_004}.

\begin{lem}\label{aux_lemma_1} Let $\Phi\in\C([0,\infty))\cap\C^1_{\textrm{loc}}((0,\infty))$ satisfy Assumption~\ref{as_unique} and \eqref{skel_continuity_5}.  Then, for every $F\in L^2(\TT^d\times[0,T];\R^d)$ and $\psi\in\C^\infty_c(\TT^d\times\R)$,
\begin{equation}\label{equiv_48} \limsup_{\ve\rightarrow 0}\abs{\int_0^t\int_{(\TT^d)^2}F(y,t)\Phi^\frac{1}{2}(\rho(y,t))\cdot \nabla_x\kappa^\ve(y-x)\psi(x,\rho^\ve(x,t))\mathbf{1}_{A_1}(y,t)\dy\dx\dt}=0.\end{equation}
\end{lem}
\begin{proof}  Let $\ve\in(0,1)$.  It follows from H\"older's inequality and \eqref{equiv_49} that
\begin{equation}\label{equiv_50}\begin{aligned}
& \abs{\int_0^t\int_{(\TT^d)^2}F(y,t)\Phi^\frac{1}{2}(\rho(y,t))\cdot \nabla_x\kappa^\ve(y-x)\psi(x,\rho^\ve(x,t))\mathbf{1}_{A_1}(y,t)\dy\dx\dt}
\\ & \leq \left(\int_0^t\int_{(\TT^d)^2}\abs{F}^2(y,t)\abs{\ve\nabla_x\kappa^\ve(y-x)}\abs{\psi(x,\rho^\ve(x,t))}\mathbf{1}_{A_1}(y,t)\dy\dx\dt\right)^\frac{1}{2}
\\ & \quad \cdot \ve^{-1}\left(\int_0^t\int_{(\TT^d)^2}\Phi(\rho(y,t))\abs{\ve\nabla_x\kappa^\ve(y-x)}\abs{\psi(x,\rho^\ve(x,t))}\mathbf{1}_{A_1}(y,t)\dy\dx\dt\right)^\frac{1}{2}.
\end{aligned}\end{equation}
We will first prove that the final term on the righthand side of \eqref{equiv_50} remains bounded.

For every $k\in\N$ let $\mathbf{1}_{k,k+1}\colon\TT^d\times[0,T]\rightarrow\{0,1\}$ denote the indicator function of $A_k\setminus A_{k+1}$.  It follows from the change of variables formula, the fact that $\Phi$ is increasing, and the definition of the $\{A_k\}_{k\in\N}$ that
\begin{equation}\label{equiv_51}\begin{aligned}
& \int_0^t\int_{(\TT^d)^2}\Phi(\rho(y,t))\abs{\ve\nabla_x\kappa^\ve(y-x)}\abs{\psi(x,\rho^\ve(x,t))}\mathbf{1}_{A_1}(y,t)\dy\dx\dt
\\ & = \sum_{k=1}^\infty \int_0^t\int_{(\TT^d)^2}\Phi(\rho(y,t))\abs{\ve\nabla_x\kappa^\ve(x)}\abs{\psi(y+x,\rho^\ve(y+x,t))}\mathbf{1}_{k,k+1}(y,t)\dy\dx\dt
\\ & \leq  \sum_{k=1}^\infty \int_0^t\int_{(\TT^d)^2}\left(\Phi^\frac{1}{2}(M)+k+1\right)^2\abs{\ve\nabla_x\kappa^\ve(x)}\abs{\psi(y+x,\rho^\ve(y+x,t))}\mathbf{1}_{k,k+1}(y,t)\dy\dx\dt.
\end{aligned}\end{equation}
For every $x\in\TT^d$ and $k\in\N$ let $B^\ve_{k,x}\subseteq\left(\TT^d\times[0,T]\right)$ be defined by
\begin{equation}\label{equiv_052}B^\ve_{k,x}=\left\{(y,t)\in\TT^d\times[0,T]\colon \abs{\Phi^\frac{1}{2}(\rho(y,t))-\left(\Phi^\frac{1}{2}(\rho)\right)^\ve(y+x,t)}\mathbf{1}_{k,k+1}(y,t)\geq k\right\},\end{equation}
where $(\Phi^\frac{1}{2}(\rho))^\ve=(\Phi^\frac{1}{2}(\rho)*\kappa^\ve)$.  Let $x\in\TT^d$ and $k\in\N$.  Since $\Phi^\frac{1}{2}$ is concave, Jensen's inequality proves that
\[\left(\Phi^\frac{1}{2}(\rho)\right)^\ve(y+x,t)\leq \Phi^\frac{1}{2}(\rho^\ve(y+x,t)).\]
Therefore, for every $(y,t)\in \left(\left(A_k\setminus A_{k+1}\right)\cap \Supp[\psi(\cdot+x,\rho^\ve(\cdot+x,\cdot))]\right)$,
\begin{equation}\label{equiv_53}\Phi^\frac{1}{2}(\rho(y,t))-\left(\Phi^\frac{1}{2}(\rho)\right)^\ve(y+x,t)\geq \Phi^\frac{1}{2}(\rho(y,t))-\Phi^\frac{1}{2}(\rho^\ve(y+x,t))\geq k.\end{equation}
It follows from \eqref{equiv_052} and \eqref{equiv_53} that
\begin{equation}\label{equiv_54}\left(\left(A_k\setminus A_{k+1}\right)\cap \Supp[\psi(\cdot+x,\rho^\ve(\cdot+x,\cdot))]\right)\subseteq B^\ve_{k,x}.\end{equation}
Returning to \eqref{equiv_51}, it follows from $\psi\in\C^\infty_c(\TT^d\times\R)$ and \eqref{equiv_54} that there exists $c\in(0,\infty)$ such that
\begin{equation}\label{equiv_55}\begin{aligned}
& \int_0^t\int_{\TT^d}\int_{A_1}\Phi(\rho(y,t))\abs{\ve\nabla_x\kappa^\ve(y-x)}\abs{\psi(x,\rho^\ve(x,t))}\dy\dx\dt
\\ & \leq c\int_{\TT^d}\left(\sum_{k=1}^\infty\left(\Phi^\frac{1}{2}(M)+k+1\right)^2\abs{B^\ve_{k,x}}\right)\abs{\ve\nabla\kappa^\ve(x)}\dx.
\end{aligned}\end{equation}
It remains to estimate the measure of the sets $\{B^\ve_{k,x}\}_{k\in\N,x\in\TT^d}$.

Let $x\in\TT^d$ and $k\in\N$.  It follows from Chebyshev's inequality, Jensen's inequality, and the fundamental theorem of calculus that, omitting the integration variables,
\begin{equation}\label{equiv_56}\begin{aligned}
\abs{B^\ve_{k,x}} & \leq \frac{1}{k^2}\int_0^t\int_{\TT^d}\abs{\Phi^\frac{1}{2}(\rho(y,t))-\left(\Phi^\frac{1}{2}(\rho)\right)^\ve(y+x,t)}^2\mathbf{1}_{k,k+1}(y,t)
\\ & \leq  \frac{1}{k^2}\int_0^t\int_{(\TT^d)^2}\abs{\Phi^\frac{1}{2}(\rho(y,t))-\Phi^\frac{1}{2}(\rho(y',t))}^2\kappa^\ve(y+x-y')\mathbf{1}_{k,k+1}(y,t)
\\ & \leq \frac{1}{k^2}\int_0^t\int_{(\TT^d)^2}\left(\int_0^1\abs{\nabla(\Phi^\frac{1}{2}(\rho))(sy+(1-s)y',t)}^2\ds\right)\abs{y-y'}^2\kappa^\ve(y+x-y')\mathbf{1}_{k,k+1}(y,t).
\end{aligned}\end{equation}
On the support of the convolution kernel, since the triangle inequality proves that $\abs{y-y'}\leq \abs{x}+\ve$, it follows from \eqref{equiv_56} that there exists $c\in(0,\infty)$ such that, omitting the integration variables,
\begin{equation}\label{equiv_58}
\abs{B^\ve_{k,x}} \leq \frac{c(\ve^2+\abs{x}^2)}{k^2}\int_0^t\int_{(\TT^d)^2}\left(\int_0^1\abs{\nabla(\Phi^\frac{1}{2}(\rho))(sy+(1-s)y',t)}^2\ds\right)\kappa^\ve(y+x-y')\mathbf{1}_{k,k+1}(y,t).
\end{equation}
Returning to \eqref{equiv_55}, it follows from \eqref{equiv_58} that there exists $c\in(0,\infty)$ depending on $M\in(0,\infty)$ such that, omitting the integration variables,
\begin{equation}\label{equiv_59}\begin{aligned}
\sum_{k=1}^\infty & \left(\Phi^\frac{1}{2}(M)+k+1\right)^2\abs{B^\ve_{k,x}}  \leq \sum_{k=1}^\infty\left(\frac{c(\ve^2+\abs{x}^2)(\Phi^\frac{1}{2}(M)+k+1)^2}{k^2}\right.
\\ & \quad \cdot \left.\int_0^t\int_{(\TT^d)^2}\left(\int_0^1\abs{\nabla(\Phi^\frac{1}{2}(\rho))(sy+(1-s)y',t)}^2\ds\right)\kappa^\ve(y+x-y')\mathbf{1}_{k,k+1}(y,t)\right)
\\ & \leq c\left(\ve^2+\abs{x}^2\right)\int_0^t\int_{(\TT^d)^2}\left(\int_0^1\abs{\nabla(\Phi^\frac{1}{2}(\rho))(sy+(1-s)y',t)}^2\ds\right)\kappa^\ve(y+x-y')\mathbf{1}_{A_1}(y,t)
\\ & \leq c\left(\ve^2+\abs{x}^2\right)\norm{\nabla\Phi^\frac{1}{2}(\rho)}^2_{L^2(\TT^d\times[0,T])}.
\end{aligned}\end{equation}
Returning to \eqref{equiv_55}, it follows from the definition of the convolution kernel and \eqref{equiv_59} that, for $c\in(0,\infty)$,
\begin{equation}\label{equiv_60}
\int_0^t\int_{\TT^d}\int_{A_1}\Phi(\rho(y,t))\abs{\ve\nabla_x\kappa^\ve(y-x)}\abs{\psi(x,\rho^\ve(x,t))}\dy\dx\dt \leq c\ve^2\norm{\nabla\Phi^\frac{1}{2}(\rho)}^2_{L^2(\TT^d\times[0,T])}.
\end{equation}
Returning to \eqref{equiv_50}, it follows from \eqref{equiv_60} that there exists $c\in(0,\infty)$ such that
\begin{equation}\label{equiv_61}\begin{aligned}
& \abs{\int_0^t\int_{(\TT^d)^2}F(y,t)\Phi^\frac{1}{2}(\rho(y,t))\cdot \nabla_x\kappa^\ve(y-x)\psi(x,\rho^\ve(x,t))\mathbf{1}_{A_1}(y,t)\dy\dx\dt}
\\ & \leq c\norm{\nabla\Phi^\frac{1}{2}(\rho)}_{L^2(\TT^d\times[0,T])}\left(\int_0^t\int_{(\TT^d)^2}\abs{F}^2(y,t)\abs{\ve\nabla_x\kappa^\ve(y-x)}\abs{\psi(x,\rho^\ve(x,t))}\mathbf{1}_{A_1}(y,t)\dy\dx\dt\right)^\frac{1}{2}.
\end{aligned}\end{equation}
Since for $c_0=\int_{\TT^d}\abs{\nabla\kappa(x)}\dx$ the functions $\{c_0^{-1}\abs{\ve\nabla\kappa^\ve}\}_{\ve\in(0,1)}$ are a Dirac sequence on $\TT^d$ and since $\psi\in\C^\infty_c(\TT^d\times\R)$, it follows from the definition of $A_1$ and the dominated convergence theorem that
\begin{equation}\label{equiv_00062} \limsup_{\ve\rightarrow 0}\left(\int_0^t\int_{(\TT^d)^2}\abs{F}^2(y,t)\abs{\ve\nabla_x\kappa^\ve(y-x)}\abs{\psi(x,\rho^\ve(x,t))}\mathbf{1}_{A_1}(y,t)\dy\dx\dt\right)=0,\end{equation}
which in combination with \eqref{equiv_61} completes the proof of \eqref{equiv_48}. \end{proof}

We will now analyze the final two terms on the righthand side of \eqref{equiv_004}.  Let $\ve\in(0,1)$.  The chain rule proves that, after integrating by parts in the $x$-variable,
\begin{equation}\label{equiv_4}\begin{aligned}
 & \int_0^t\int_{(\TT^d)^2}2\Phi^\frac{1}{2}(\rho(y,t))\nabla\Phi^\frac{1}{2}(\rho(y,t))\cdot \nabla_x\kappa^\ve(y-x)\psi(x,\rho^\ve(x,t))\mathbf{1}_{A_0}(y,t)\dy\dx\dt
\\ & \quad - \int_0^t\int_{(\TT^d)^2}g(y,t)\Phi^\frac{1}{2}(\rho(y,t))\cdot \nabla_x\kappa^\ve(y-x)\psi(x,\rho^\ve(x,t))\mathbf{1}_{A_0}(y,t)\dy\dx\dt
\\ & =  -\int_0^t\int_{(\TT^d)^2}2\Phi^\frac{1}{2}(\rho(y,t))\nabla\Phi^\frac{1}{2}(\rho(y,t))\kappa^\ve(y-x)\cdot \left(\nabla_x\psi\right)(x,\rho^\ve(x,t))\mathbf{1}_{A_0}(y,t)\dy\dx\dt
\\ & \quad -\int_0^t\int_{(\TT^d)^2}2\Phi^\frac{1}{2}(\rho(y,t))\nabla\Phi^\frac{1}{2}(\rho(y,t))\kappa^\ve(y-x)\left(\partial_\xi\psi\right)(x,\rho^\ve(x,t))\cdot \nabla_x\rho^\ve(x,t)\mathbf{1}_{A_0}(y,t)\dy\dx\dt
\\ & \quad +\int_0^t\int_{(\TT^d)^2}g(y,t)\Phi^\frac{1}{2}(\rho(y,t))\kappa^\ve(y-x)\cdot \left(\nabla_x\psi\right)(x,\rho^\ve(x,t))\mathbf{1}_{A_0}(y,t)\dy\dx\dt
\\ & \quad +\int_0^t\int_{(\TT^d)^2}g(y,t)\Phi^\frac{1}{2}(\rho(y,t))\kappa^\ve(y-x)\left(\partial_\xi\psi\right)(x,\rho^\ve(x,t))\cdot \nabla_x\rho^\ve(x,t)\mathbf{1}_{A_0}(y,t)\dy\dx\dt.
\end{aligned}\end{equation}
Since $\Phi^\frac{1}{2}(\rho(x,t))\in L^2([0,T];H^1(\TT^d))$ and $g(x,t)\in L^2(\TT^d\times[0,T];\R^d)$, it follows from the definition of $A_0$, $\psi\in\C^\infty_c(\TT^d\times\R)$, and the dominated convergence theorem that the first and third terms appearing on the righthand side of \eqref{equiv_4} satisfy
\begin{equation}\label{equiv_75}\begin{aligned}
& \lim_{\ve\rightarrow 0}\left(\int_0^t\int_{(\TT^d)^2}2\Phi^\frac{1}{2}(\rho(y,t))\nabla\Phi^\frac{1}{2}(\rho(y,t))\kappa^\ve(y-x)\cdot \left(\nabla_x\psi\right)(x,\rho^\ve(x,t))\mathbf{1}_{A_0}(y,t)\dy\dx\dt\right)
\\ & = \int_0^t\int_{\TT^d}2\Phi^\frac{1}{2}(\rho(x,t))\nabla\Phi^\frac{1}{2}(\rho(x,t))\cdot \left(\nabla_x\psi\right)(x,\rho(x,t))\dx\dt,
\end{aligned}\end{equation}
and
\begin{equation}\label{equiv_76}\begin{aligned}
& \lim_{\ve\rightarrow 0}\left(\int_0^t\int_{(\TT^d)^2}g(y,t)\Phi^\frac{1}{2}(\rho(y,t))\kappa^\ve(y-x)\cdot \left(\nabla_x\psi\right)(x,\rho^\ve(x,t))\mathbf{1}_{A_0}(y,t)\dy\dx\dt\right)
\\ & = \int_0^t\int_{\TT^d}g(y,t)\Phi^\frac{1}{2}(\rho(x,t))\cdot \left(\nabla_x\psi\right)(x,\rho(x,t))\dx\dt.
\end{aligned}\end{equation}
The second and fourth terms appearing on the righthand side of \eqref{equiv_4} are handled by choosing $F=g$ and $F=\nabla\Phi^\frac{1}{2}(\rho)$ in Lemma~\ref{aux_lemma_3} below.

\begin{lem}\label{aux_lemma_3} Let $\Phi\in\C([0,\infty))\cap\C^1_{\textrm{loc}}((0,\infty))$ satisfy Assumption~\ref{as_unique} and \eqref{skel_continuity_5}.  Then, for every $F\in L^2(\TT^d\times[0,T];\R^d)$ and $\psi\in\C^\infty_c(\TT^d\times\R)$,
\begin{equation}\label{al_1}\begin{aligned}
& \lim_{\ve\rightarrow 0}\left(\int_0^t\int_{(\TT^d)^2}F(y,t)\Phi^\frac{1}{2}(\rho(y,t))\kappa^\ve(y-x)\left(\partial_\xi\psi\right)(x,\rho^\ve(x,t))\cdot \nabla_x\rho^\ve(x,t)\mathbf{1}_{A_0}(y,t)\dy\dx\dt\right)
\\ & = \int_0^t\int_{\TT^d}F(y,t)\frac{2\Phi(\rho(y,t))}{\Phi'(\rho(y,t))}\left(\partial_\xi\psi\right)(y,\rho(y,t))\cdot \nabla\Phi^\frac{1}{2}(\rho(y,t))\dy\dt.
\end{aligned}\end{equation}
\end{lem}

\begin{proof}  We will first make a preliminary calculation.  Let $(\Phi^\frac{1}{2})^{-1}$ denote the function inverse to $\Phi^\frac{1}{2}$.  It follows from \eqref{skel_continuity_5} that $(\Phi^\frac{1}{2})^{-1}$ is locally Lipschitz continuous since, for every $\xi\in[0,\infty)$,
\begin{equation}\label{equiv_6}\frac{\dd}{\dxi}\left[\left(\Phi^\frac{1}{2}\right)^{-1}\right]\left(\Phi^\frac{1}{2}(\xi)\right)=\frac{1}{\left(\Phi^\frac{1}{2}\right)'(\xi)}=\frac{2\Phi^\frac{1}{2}(\xi)}{\Phi'(\xi)}.\end{equation}
The chain rule, the fact that $\Phi^\frac{1}{2}(\rho)\in L^2([0,T];H^1(\TT^d))$, and \eqref{equiv_6} prove that
\begin{equation}\label{equiv_3}\begin{aligned}
\nabla \rho^\ve(x,t)  & = \int_{\TT^d}\rho(z,t)\nabla_x \kappa^\ve(z-x)\dz = \int_{\TT^d}\frac{2\Phi^\frac{1}{2}(\rho(z,t))}{\Phi'(\rho(z,t))}\nabla\left(\Phi^\frac{1}{2}(\rho(z,t))\right)\kappa^\ve(z-x)\dz.
\end{aligned}\end{equation}
It follows from  H\"older's inequality and the nonnegativity of $\rho$ that there exists $c\in(0,\infty)$ such that, for $p\in[2,\infty)$ as in \eqref{skel_continuity_5}, for every $(x,t)\in\TT^d\times[0,T]$, 
\begin{equation}\label{equiv_63}\begin{aligned}
& \abs{\int_{\TT^d}\frac{2\Phi^\frac{1}{2}(\rho(z,t))}{\Phi'(\rho(z,t))}\nabla\left(\Phi^\frac{1}{2}(\rho(z,t))\right)\kappa^\ve(z-x)\dz}
\\  & \leq 2\left(\int_{\TT^d}\left(\frac{\Phi^\frac{1}{2}(\rho(z,t))}{\Phi'(\rho(z,t))}\right)^p\kappa^\ve(z-x)\dz\right)^\frac{1}{p}\left(\int_{\TT^d}\abs{\nabla\Phi^\frac{1}{2}(\rho(z,t))}^2\kappa^\ve(z-x)\dz\right)^\frac{1}{2}
\\ & \leq c\left(\int_{\TT^d}\left(\rho(z,t)+1\right)\kappa^\ve(z-x)\dz\right)^\frac{1}{p}\left(\int_{\TT^d}\abs{\nabla\Phi^\frac{1}{2}(\rho(z,t))}^2\kappa^\ve(z-x)\dz\right)^\frac{1}{2}
\\ & = c \left(\rho^\ve(x,t)+1\right)^\frac{1}{p}\left(\int_{\TT^d}\abs{\nabla\Phi^\frac{1}{2}(\rho(z,t))}^2\kappa^\ve(z-x)\dz\right)^\frac{1}{2}.
\end{aligned}\end{equation}
We will now prove the statement.  Let $\ve\in(0,1)$.  It follows from \eqref{equiv_3} that
\begin{equation}\label{equiv_065}\begin{aligned}
& \int_0^t\int_{(\TT^d)^2}F(y,t)\Phi^\frac{1}{2}(\rho(y,t))\kappa^\ve(y-x)\left(\partial_\xi\psi\right)(x,\rho^\ve(x,t))\cdot \nabla_x\rho^\ve(x,t)\mathbf{1}_{A_0}(y,t)\dy\dx\dt
\\ &=\int_0^t\int_{(\TT^d)^2}F(y,t)\Phi^\frac{1}{2}(\rho(y,t))\kappa^\ve(y-x)\left(\partial_\xi\psi\right)(x,\rho^\ve(x,t))
\\ &\quad \quad \quad \quad \quad \quad \cdot \Big(\int_{\TT^d}\frac{2\Phi^\frac{1}{2}(\rho(z,t))}{\Phi'(\rho(z,t))}\nabla\Phi^\frac{1}{2}(\rho(z,t))\kappa^\ve(z-x)\dz\Big)\mathbf{1}_{A_0}(y,t)\dy\dx\dt.
\end{aligned}\end{equation}
The integral with respect to the $z$-variable will be decomposed in terms of $A_0$ and $A_1$.  We will first prove that
\begin{equation}\label{equiv_00070}\begin{aligned}
& \lim_{\ve\rightarrow 0}\left|\int_0^t\int_{(\TT^d)^2}\Big(F(y,t)\Phi^\frac{1}{2}(\rho(y,t))\kappa^\ve(y-x)\left(\partial_\xi\psi\right)(x,\rho^\ve(x,t))\mathbf{1}_{A_0}(y,t)\right.
\\ & \quad \quad \quad \quad \quad \quad \cdot \left.\int_{\TT^d}\frac{2\Phi^\frac{1}{2}(\rho(z,t))}{\Phi'(\rho(z,t))}\nabla\Phi^\frac{1}{2}(\rho(z,t))\kappa^\ve(z-x)\mathbf{1}_{A_1}(z,t)\dz\Big)\dy\dx\dt\right|=0.
\end{aligned}\end{equation}
Let $\ve\in(0,1)$.  After repeating the derivation leading to \eqref{equiv_63}, for $p\in[2,\infty)$ satisfying \eqref{skel_continuity_5}, there exists $c\in(0,\infty)$ such that \eqref{equiv_00070} is bounded by
\begin{equation}\label{equiv_00068}\begin{aligned}
&c\int_0^t\int_{(\TT^d)^2}\Big(\abs{F(y,t)}\Phi^\frac{1}{2}(\rho(y,t))\kappa^\ve(y-x)\abs{\left(\partial_\xi\psi\right)(x,\rho^\ve(x,t))}(\rho^\ve(x,t)+1)^\frac{1}{p}\mathbf{1}_{A_0}(y,t)
\\ & \quad \quad \quad \quad \quad \quad \cdot \Big(\int_{\TT^d}\abs{\nabla\Phi^\frac{1}{2}(\rho(z,t))}^2\kappa^\ve(z-x)\mathbf{1}_{A_1}(z,t)\dz\Big)^\frac{1}{2}\Big)\dy\dx\dt.
\end{aligned}\end{equation}
Then, since $\psi\in\C^\infty_c(\TT^d\times\R)$ implies that $\rho^\ve(x,t)$ is bounded on the support of $\psi(x,\rho^\ve(x,t))$, it follows from H\"older's inequality and the definition of $A_0$ that there exists $c\in(0,\infty)$ such that \eqref{equiv_00068} is bounded by
\begin{equation}\label{equiv_69}\begin{aligned}
& c\left(\int_0^t\int_{(\TT^d)^2}\abs{F}^2(y,t)\abs{\left(\partial_\xi\psi\right)(x,\rho^\ve(x,t))}\kappa^\ve(y-x)\mathbf{1}_{A_0}(y,t)\dy\dx\dt\right)^\frac{1}{2}
\\ & \cdot \left(\int_0^t\int_{(\TT^d)^3}\abs{\nabla\Phi^\frac{1}{2}(\rho(z,t))}^2\abs{\left(\partial_\xi\psi\right)(x,\rho^\ve(x,t))}\kappa^\ve(z-x)\kappa^\ve(y-x)\mathbf{1}_{A_0}(y,t)\mathbf{1}_{A_1}(z,t)\dx\dy\dz\dt\right)^\frac{1}{2}
\\ & \leq c\norm{F}_{L^2(\TT^d\times[0,T])}\left(\int_0^t\int_{(\TT^d)^2}\abs{\nabla\Phi^\frac{1}{2}(\rho(z,t))}^2\abs{\left(\partial_\xi\psi\right)(x,\rho^\ve(x,t))}\kappa^\ve(z-x)\mathbf{1}_{A_0}(y,t)\dx\dz\dt\right)^\frac{1}{2}.
\end{aligned}\end{equation}
Since, as $\ve\rightarrow 0$, the definition of $A_1$ and $\psi\in\C^\infty_c(\TT^d\times\R)$ prove that
\begin{equation}\label{equiv_70}\begin{aligned}
& \lim_{\ve\rightarrow 0}\left(\int_0^t\int_{(\TT^d)^2}\abs{\nabla\Phi^\frac{1}{2}(\rho(z,t))}^2\abs{\left(\partial_\xi\psi\right)(x,\rho^\ve(x,t))}\kappa^\ve(z-x)\mathbf{1}_{A_1}(z,t)\dx\dz\dt\right)
\\ & = \int_0^t\int_{\TT^d}\abs{\nabla\Phi^\frac{1}{2}(\rho(z,t))}^2\abs{\left(\partial_\xi\psi\right)(z,\rho(z,t))}\mathbf{1}_{A_1}(z,t)\dz\dt = 0,
\end{aligned}\end{equation}
in combination \eqref{equiv_69} and \eqref{equiv_70} complete the proof of \eqref{equiv_00070}.

For the integral over $A_0$ in the $z$-variable, since the definition of $A_0$ implies that $\nicefrac{2\Phi^\frac{1}{2}(\rho)}{\Phi'(\rho)}\mathbf{1}_{A_0}$ is bounded, it follows from $\psi\in\C^\infty_c(\TT^d\times[0,T])$, $\Phi^\frac{1}{2}(\rho)\in L^2([0,T];H^1(\TT^d))$, $F\in L^2(\TT^d\times[0,T];\R^d)$, H\"older's inequality, and the dominated convergence theorem that
\begin{equation}\label{equiv_71}\begin{aligned}
& \lim_{\ve\rightarrow 0}\left(\int_0^t\int_{(\TT^d)^2}F(y,t) \Phi^\frac{1}{2}(\rho(y,t))\kappa^\ve(y-x)\left(\partial_\xi\psi\right)(x,\rho^\ve(x,t))\mathbf{1}_{A_0}(y,t)\right.
\\ &\quad\quad\quad\quad\quad\quad \cdot \left.\Big(\int_{\TT^d}\frac{2\Phi^\frac{1}{2}(\rho(z,t))}{\Phi'(\rho(z,t))}\cdot \nabla\Phi^\frac{1}{2}(\rho(z,t))\kappa^\ve(z-x)\mathbf{1}_{A_0}(z,t)\dz\Big)\dy\dx\dt\right)
\\ & = 2\int_0^t\int_{\TT^d}F(x,t)\frac{2\Phi(\rho(x,t))}{\Phi'(\rho(x,t))}\cdot \nabla\Phi(\rho(x,t))(\partial_\xi\psi)(x,\rho(x,t))\dx\dt.
\end{aligned}\end{equation}
In combination \eqref{equiv_065}, \eqref{equiv_00070}, \eqref{equiv_71}, and $\TT^d\times[0,T]=A_0\cup A_1$ complete the proof of \eqref{al_1}.  \end{proof}

Returning to \eqref{equiv_004}, it follows from $\TT^d\times[0,T]=A_0\cup A_1$, Lemma~\ref{aux_lemma_1}, \eqref{equiv_75}, \eqref{equiv_76}, and Lemma~\ref{aux_lemma_3} that
\[\begin{aligned}
\left.\int_{\TT^d}\Psi(x,\rho(x,r))\dx\right|_{r=0}^{t} & = \lim_{\ve\rightarrow 0} \left.\int_{\TT^d}\Psi(x,\rho^\ve(x,r))\dx\right|_{r=0}^{t}
\\ & =  -\int_0^t\int_{\TT^d}2\Phi^\frac{1}{2}(\rho(x,t))\nabla\Phi^\frac{1}{2}(\rho(x,t))\cdot\left(\nabla_x\psi\right)(x,\rho(x,t))\dx\dt
\\ & \quad -\int_0^t\int_{\TT^d}\frac{4\Phi(\rho(x,t))}{\Phi'(\rho(x,t))}\abs{\nabla\Phi^\frac{1}{2}(\rho(x,t))}^2\left(\partial_\xi\psi\right)(x,\rho(x,t))\dx\dt
\\ & \quad +\int_0^t\int_{\TT^d}g(x,t)\Phi^\frac{1}{2}(\rho(x,t))\cdot \left(\nabla_x\psi\right)(x,\rho(x,t))\dx\dt
\\ & \quad +\int_0^t\int_{\TT^d}g(x,t)\frac{2\Phi(\rho(x,t))}{\Phi'(\rho(x,t))}\left(\partial_\xi\psi\right)(x,\rho(x,t))\cdot \nabla\Phi^\frac{1}{2}(\rho(x,t))\dx\dt.
\end{aligned}\]
It then follows from the definition of the kinetic function, Lemma~\ref{ibp} after integrating by parts in the $x$-variable, and the definition of the parabolic defect measure that $\rho$ is a renormalized kinetic solution of \eqref{skel_eq}.  This completes the proof in the case that $\Phi$ satisfies \eqref{skel_continuity_5}.

\textbf{The case that $\Phi^\frac{1}{2}$ satisfies \eqref{skel_continuity_6} and \eqref{skel_continuity_7}.}  In this case, we form a slightly different decomposition.  For every $k\in\N$ let $A_k\subseteq\left(\TT^d\times[0,T]\right)$ be defined by
\[ A_k=\{(y,t)\in\TT^d\times[0,T]\colon \rho(y,t)\geq M+k\},\]
and let $A_0=(\TT^d\times[0,T])\setminus A_1$.  The proof now follows identically to the previous case, provided we can prove the analogues of Lemmas~\ref{aux_lemma_1} and \ref{aux_lemma_3}.  We prove these statements in Lemmas~\ref{aux_lemma_2} and \ref{aux_lemma_4} below, which completes the proof. \end{proof}

\begin{lem}\label{aux_lemma_2} Let $\Phi\in\C([0,\infty))\cap\C^1_{\textrm{loc}}((0,\infty))$ satisfy Assumption~\ref{as_unique}, \eqref{skel_continuity_6}, and \eqref{skel_continuity_7}.  Then, for every $F\in L^2(\TT^d\times[0,T];\R^d)$ and $\psi\in\C^\infty_c(\TT^d\times\R)$,
\begin{equation}\label{equiv_30} \limsup_{\ve\rightarrow 0}\abs{\int_0^t\int_{(\TT^d)^2}F(y,t)\Phi^\frac{1}{2}(\rho(y,t))\cdot \nabla_x\kappa^\ve(y-x)\psi(x,\rho^\ve(x,t))\mathbf{1}_{A_1}(y,t)\dy\dx\dt}=0.\end{equation}
\end{lem}

\begin{proof} Let $\ve\in(0,1)$.  H\"older's inequality proves that
\begin{equation}\label{equiv_32}\begin{aligned}
& \abs{\int_0^t\int_{(\TT^d)^2}F(y,t)\Phi^\frac{1}{2}(\rho(y,t))\cdot \nabla_x\kappa^\ve(y-x)\psi(x,\rho^\ve(x,t))\mathbf{1}_{A_1}(y,t)\dy\dx\dt}
\\ & \leq \left(\int_0^t\int_{(\TT^d)^2}\abs{F}^2(y,t)\abs{\ve\nabla_x\kappa^\ve(y-x)}\abs{\psi(x,\rho^\ve(x,t))}\mathbf{1}_{A_1}(y,t)\dy\dx\dt\right)^\frac{1}{2}
\\ & \quad \cdot \ve^{-1}\left(\int_0^t\int_{(\TT^d)^2}\Phi(\rho(y,t))\abs{\ve\nabla_x\kappa^\ve(y-x)}\abs{\psi(x,\rho^\ve(x,t))}\mathbf{1}_{A_1}(y,t)\dy\dx\dt\right)^\frac{1}{2}.
\end{aligned}\end{equation}
We will first prove that the final term on the righthand side of \eqref{equiv_32} remains bounded.

For every $k\in\N$, let $\mathbf{1}_{k,k+1}\colon\TT^d\times[0,T]\rightarrow\{0,1\}$ denote the indicator function of the set $A_{k}\setminus A_{k+1}$.  The change of variables formula, the definition of the $\{A_k\}_{k\in\N}$, and the fact that $\Phi$ is increasing prove that
\[\begin{aligned}
& \int_0^t\int_{(\TT^d)^2}\Phi(\rho(y,t))\abs{\ve\nabla_x\kappa^\ve(y-x)}\abs{\psi(x,\rho^\ve(x,t))}\mathbf{1}_{A_1}(y,t)\dy\dx\dt
\\ & = \sum_{k=1}^\infty \int_0^t\int_{(\TT^d)^2}\Phi(\rho(y,t))\abs{\ve\nabla_x\kappa^\ve(x)}\abs{\psi(y+x,\rho^\ve(y+x,t))}\mathbf{1}_{k,k+1}(y,t)\dy\dx\dt
\\ & \leq \sum_{k=1}^\infty \int_0^t\int_{(\TT^d)^2}\Phi(M+k+1)\abs{\ve\nabla_x\kappa^\ve(x)}\abs{\psi(y+x,\rho^\ve(y+x,t))}\mathbf{1}_{k,k+1}(y,t)\dy\dx\dt.
\end{aligned}\]
For every $x\in \TT^d$ and $k\in\N$ let $B^\ve_{k,x}\subseteq\left(\TT^d\times[0,T]\right)$ be defined by
\[B^\ve_{k,x}=\left\{(y,t)\in \TT^d\times[0,T]\colon \abs{\rho(y,t)-\rho^\ve(y+x,t)}\mathbf{1}_{k,k+1}(y,t)\geq k\right\}.\]
The definition of the $\{B^\ve_{k,x}\}_{k\in\N,x\in\TT^d}$ and the definition of $M\in(0,\infty)$ prove for every $k\in\N$ and $x\in\TT^d$ that
\begin{equation}\label{equiv_34}\left((A_k\setminus A_{k+1})\cap\Supp[\psi(\cdot+x,\rho^\ve(\cdot+x,\cdot))]\right)\subseteq B^\ve_{k,x}.\end{equation}
Since $\psi\in\C^\infty_c(\TT^d\times\R)$ it follows from \eqref{equiv_34} that there exists $c\in(0,\infty)$ such that
\begin{equation}\label{equiv_35}\begin{aligned}
& \int_0^t\int_{(\TT^d)^2}\Phi(\rho(y,t))\abs{\ve\nabla_x\kappa^\ve(y-x)}\abs{\psi(x,\rho^\ve(x,t))}\mathbf{1}_{A_1}(y,t)\dy\dx\dt
\\ & \leq c\int_{\TT^d}\left(\sum_{k=1}^\infty\Phi(M+k+1)\abs{B^\ve_{k,x}}\right)\abs{\ve\nabla_x\kappa^\ve(x)}\dx.
\end{aligned} \end{equation}
It remains to estimate the measure of the $\{B^\ve_{k,x}\}_{k\in\N,x\in\TT^d}$.

Let $k\in\N$ and $x\in\TT^d$.  Chebyshev's inequality and Jensen's inequality prove that
\begin{equation}\label{equiv_36}\begin{aligned}
\abs{B^\ve_{k,x}} & \leq \frac{1}{k^2}\int_0^t\int_{\TT^d} \abs{\rho(y,t)-\rho^\ve(y+x,t)}^2\mathbf{1}_{k,k+1}(y,t)\dy\dt
\\ & \leq \frac{1}{k^2}\int_0^t\int_{(\TT^d)^2}\abs{\rho(y,t)-\rho(y',t)}^2\kappa^\ve(y+x-y')\mathbf{1}_{k,k+1}(y,t)\dyp\dy\dt.
\end{aligned}\end{equation}
Since the convexity proves that the derivative of the inverse of $\Phi^\frac{1}{2}$ is non-increasing, it follows from the nonnegativity of $\rho$ that, for every $y,y'\in\TT^d$ and $t\in[0,T]$,
\begin{equation}\label{equiv_37}\begin{aligned}
& \abs{\rho(y,t)-\rho(y',t)} =\abs{\left(\Phi^\frac{1}{2}\right)^{-1}\left(\Phi^\frac{1}{2}(\rho(y,t))\right)-\left(\Phi^\frac{1}{2}\right)^{-1}\left(\Phi^\frac{1}{2}(\rho(y',t))\right)}
\\ & \quad = \abs{\int_0^1\left(\left(\Phi^\frac{1}{2}\right)^{-1}\right)'\left(s\Phi^\frac{1}{2}(\rho(y,t))+(1-s)\Phi^\frac{1}{2}(\rho(y',t))\right)\ds} \cdot \abs{\Phi^\frac{1}{2}(\rho(y,t))-\Phi^\frac{1}{2}(\rho(y',t))}
\\ & \quad \leq \abs{\int_0^1\left(\left(\Phi^\frac{1}{2}\right)^{-1}\right)'\left(s\Phi^\frac{1}{2}(\rho(y,t))\right)\ds}\cdot \abs{\Phi^\frac{1}{2}(\rho(y,t))-\Phi^\frac{1}{2}(\rho(y',t))}.
\end{aligned}\end{equation}
It then follows from the fundamental theorem of calculus, the nonnegativity of $\rho$, the fact that $(\Phi^\frac{1}{2})^{-1}(0)=0$, and \eqref{equiv_37} that, for every $y,y'\in\TT^d$ and $t\in[0,T]$,
\begin{equation}\label{equiv_38}
\abs{\rho(y,t)-\rho(y',t)} \leq \abs{\frac{\rho(y,t)}{\Phi^\frac{1}{2}(\rho(y,t))}}\abs{\Phi^\frac{1}{2}(\rho(y,t))-\Phi^\frac{1}{2}(\rho(y',t))}.
\end{equation}
Returning to \eqref{equiv_36}, it follows from the definition of the $\{A_k\}_{k\in\N}$, the fact that $\Phi$ is increasing, and \eqref{equiv_38} that
\begin{equation}\label{equiv_39}\begin{aligned}
\abs{B^\ve_{k,x}} & \leq \frac{1}{k^2}\int_0^t\int_{(\TT^d)^2}\abs{\frac{\rho(y,t)}{\Phi^\frac{1}{2}(\rho(y,t))}}^2\abs{\Phi^\frac{1}{2}(\rho(y,t))-\Phi^\frac{1}{2}(\rho(y',t))}^2\kappa^\ve(y+x-y')\mathbf{1}_{k,k+1}(y,t)\dyp\dy\dt
\\ & \leq \frac{(M+k+1)^2}{k^2\Phi(M+k)}\int_0^t\int_{(\TT^d)^2}\abs{\Phi^\frac{1}{2}(\rho(y,t))-\Phi^\frac{1}{2}(\rho(y',t))}^2\kappa^\ve(y+x-y')\mathbf{1}_{k,k+1}(y,t)\dyp\dy\dt.
\end{aligned}\end{equation}
It follows from Jensen's inequality and the fundamental theorem of calculus that
\begin{equation}\label{equiv_40}\begin{aligned}
& \int_0^t\int_{(\TT^d)^2}\abs{\Phi^\frac{1}{2}(\rho(y,t))-\Phi^\frac{1}{2}(\rho(y',t))}^2\kappa^\ve(y+x-y')\mathbf{1}_{k,k+1}(y,t)\dyp\dy\dt
\\ & \leq \int_0^t\int_{(\TT^d)^2}\left(\int_0^1\abs{\nabla\Phi^\frac{1}{2}(\rho(sy+(1-s)y',t))}^2\ds\right)\abs{y-y'}^2\kappa^\ve(y+x-y')\mathbf{1}_{k,k+1}(y,t)\dyp\dy\dt.
\end{aligned}\end{equation}
Since on the support of the convolution kernel we have $\abs{y-y'}\leq \abs{x}+\ve$, it follows from \eqref{equiv_39} and  \eqref{equiv_40} that there exists $c\in(0,\infty)$ such that
\begin{equation}\label{equiv_42} \begin{aligned}
 \abs{B^\ve_{k,x}} & \leq \frac{c\left(\ve^2+\abs{x}^2\right)(M+k+1)^2}{k^2\Phi(M+k)}
\\ & \quad \cdot\int_0^t\int_{(\TT^d)^2}\left(\int_0^1\abs{\nabla\Phi^\frac{1}{2}(\rho(sy+(1-s)y',t))}^2\ds\right)\kappa^\ve(y+x-y')\mathbf{1}_{k,k+1}(y,t)\dyp\dy\dt.
\end{aligned}\end{equation}
Therefore, returning to \eqref{equiv_35}, it follows from the definition of $M\in(0,\infty)$, \eqref{skel_continuity_6}, and \eqref{equiv_42} that, for $c\in(0,\infty)$ depending on $M\in(0,\infty)$, for every $x\in\TT^d$,
\begin{equation}\label{equiv_43}\begin{aligned}
& \sum_{k=1}^\infty\Phi(M+k+1)\abs{B^\ve_{k,x}}
\\ & \leq \sum_{k=1}^\infty \left(\frac{c\left(\ve^2+\abs{x}^2\right)(M+k+1)^2\Phi(M+k+1)}{k^2\Phi(M+k)}\right.
\\ & \quad  \left.\cdot\int_0^t\int_{(\TT^d)^2}\left(\int_0^1\abs{\nabla\Phi^\frac{1}{2}(\rho(sy+(1-s)y',t))}^2\ds\right)\kappa^\ve(y+x-y')\mathbf{1}_{k,k+1}(y,t)\dyp\dy\dt\right)
\\ & \leq c\left(\ve^2+\abs{x}^2\right)\int_0^t\int_{(\TT^d)^2}\left(\int_0^1\abs{\nabla\Phi^\frac{1}{2}(\rho(sy+(1-s)y',t))}^2\ds\right)\kappa^\ve(y+x-y')\mathbf{1}_{A_1}(y,t)\dyp\dy\dt
\\ & \leq c\left(\ve^2+\abs{x}^2\right)\norm{\nabla\Phi^\frac{1}{2}(\rho)}_{L^2(\TT^d\times[0,T])}^2.
\end{aligned}\end{equation}
Returning to \eqref{equiv_35}, it follows from the definition of the convolution kernel and \eqref{equiv_43} that, for $c\in(0,\infty)$, for every $\ve\in(0,1)$,
\begin{equation}\label{equiv_44}
\int_0^t\int_{(\TT^d)^2}\Phi(\rho(y,t))\abs{\ve\nabla_x\kappa^\ve(y-x)}\abs{\psi(x,\rho^\ve(x,t))}\mathbf{1}_{A_1}(y,t)\dy\dx\dt\leq c\ve^2\norm{\nabla\Phi^\frac{1}{2}(\rho)}^2_{L^2(\TT^d\times[0,T])}.
\end{equation}
Then, returning to \eqref{equiv_32}, it follows from \eqref{equiv_44} that, for that $c\in(0,\infty)$, for every $\ve\in(0,1)$,
\begin{equation}\label{equiv_000080}\begin{aligned}
& \abs{\int_0^t\int_{(\TT^d)^2}F(y,t)\Phi^\frac{1}{2}(\rho(y,t))\cdot\nabla_x\kappa^\ve(y-x)\psi(x,\rho^\ve(x,t))\mathbf{1}_{A_1}(y,t)\dy\dx\dt}
\\ & \leq c\norm{\nabla\Phi^\frac{1}{2}(\rho)}_{L^2(\TT^d\times[0,T])}\left(\int_0^t\int_{(\TT^d)^2}\abs{F}^2(y,t)\abs{\ve\nabla_x\kappa^\ve(y-x)}\abs{\psi(x,\rho^\ve(x,t))}\mathbf{1}_{A_1}(y,t)\dy\dx\dt\right)^\frac{1}{2}.
\end{aligned}\end{equation}
A repetition of the argument leading to \eqref{equiv_00062} proves that
\[\limsup_{\ve\rightarrow 0}\abs{\int_0^t\int_{(\TT^d)^2}F(y,t)\Phi^\frac{1}{2}(\rho(y,t))\cdot\nabla_x\kappa^\ve(y-x)\psi(x,\rho^\ve(x,t))\mathbf{1}_{A_1}(y,t)\dy\dx\dt}=0,\]
which with \eqref{equiv_000080} completes the proof of \eqref{equiv_30}.  \end{proof}

\begin{lem}\label{aux_lemma_4}  Let $\Phi\in\C([0,\infty))\cap\C^1_{\textrm{loc}}((0,\infty))$ satisfy Assumption~\ref{as_unique}, \eqref{skel_continuity_6}, and \eqref{skel_continuity_7}.  Then, for every $F\in L^2(\TT^d\times[0,T];\R^d)$ and $\psi\in\C^\infty_c(\TT^d\times\R)$,
\[\begin{aligned}
& \lim_{\ve\rightarrow 0}\left(\int_0^t\int_{(\TT^d)^2}F(y,t)\Phi^\frac{1}{2}(\rho(y,t))\kappa^\ve(y-x)\left(\partial_\xi\psi\right)(x,\rho^\ve(x,t))\cdot\nabla_x\rho^\ve(x,t)\mathbf{1}_{A_0}(y,t)\dy\dx\dt\right)
\\ & = \int_0^t\int_{\TT^d}F(x,t)\frac{2\Phi(\rho(x,t))}{\Phi'(\rho(x,t))}\cdot\nabla\Phi^\frac{1}{2}(\rho(x,t))(\partial_\xi\psi)(x,\rho(x,t))\dx\dt.
\end{aligned}\]
\end{lem}

\begin{proof}  We will first define a cutoff function that will be used to treat the degeneracy of the diffusion.  Let $\phi\colon\R\rightarrow[0,1]$ be defined by
\[\phi (\xi)=\left\{1\;\;\textrm{if}\;\;\abs{\xi}\leq 1,\;\;\textrm{and}\;\;1-\abs{\xi}\;\; \textrm{if}\;\; 1 \leq \abs{\xi}\leq 2,\;\;\textrm{and}\;\;0\;\;\textrm{if}\;\;\abs{\xi}>2\right\},\]
and for every $\eta\in(0,1)$ let $\phi^\eta\colon\R\rightarrow\R$ be defined by $\phi^\eta(\xi)=\phi(\nicefrac{\xi}{\eta})$.  Let $\ve,\eta\in(0,1)$.  The definition of the kinetic function proves that
\begin{equation}\label{equiv_11} \begin{aligned} & \nabla \rho^\ve(x,t) = \int_{\TT^d}\rho(z,t)\nabla_x \kappa^\ve(z-x)\dz
\\& = \int_\R\int_{\TT^d}\chi(z,\xi,t)\nabla_x \kappa^\ve(z-x)\phi^\eta(\xi)\dz\dxi +\int_\R\int_{\TT^d}\chi(z,\xi,t)\nabla_x \kappa^\ve(z-x)(1-\phi^\eta(\xi))\dz\dxi.
\end{aligned} \end{equation}
It follows from Lemma~\ref{ibp} and the definition of $\phi^\eta$ that the final term of \eqref{equiv_11} satisfies
\begin{equation}\label{equiv_12}\begin{aligned} & \int_\R\int_{\TT^d}\chi(z,\xi,t)\nabla_x \kappa^\ve(z-x)(1-\phi^\eta(\xi))\dz\dxi
\\ & = -\int_\R\int_{\TT^d}\frac{\Phi'(\xi)}{2\Phi^\frac{1}{2}(\xi)}\chi(z,\xi,t)\nabla_z\left( \kappa^\ve(z-x)\frac{2\Phi^\frac{1}{2}(\xi)}{\Phi'(\xi)}(1-\phi^\eta(\xi))\right)\dz\dxi
\\ & = \int_{\TT^d}\nabla_z\Phi^\frac{1}{2}(\rho(z,t))\kappa^\ve(z-x)\frac{2\Phi^\frac{1}{2}(\rho(z,t))}{\Phi'(\rho(z,t))}(1-\phi^\eta(\rho(z,t)))\dz.
\end{aligned}\end{equation}
It follows from \eqref{equiv_11} and \eqref{equiv_12} that
\begin{equation}\label{equiv_012}\begin{aligned}
& \int_0^t\int_{(\TT^d)^2}F(y,t)\Phi^\frac{1}{2}(\rho(y,t))\kappa^\ve(y-x)\left(\partial_\xi\psi\right)(x,\rho^\ve(x,t))\cdot\nabla_x\rho^\ve(x,t)\mathbf{1}_{A_0}(y,t)
\\ & = \int_0^t\int_{(\TT^d)^3}F(y,t)\Phi^\frac{1}{2}(\rho(y,t))\kappa^\ve(y-x)\left(\partial_\xi\psi\right)(x,\rho^\ve(x,t))\mathbf{1}_{A_0}(y,t)
\\ & \quad \quad \quad \quad \quad \cdot \nabla_z\Phi^\frac{1}{2}(\rho(z,t))\kappa^\ve(z-x)\frac{2\Phi^\frac{1}{2}(\rho(z,t))}{\Phi'(\rho(z,t))}(1-\phi^\eta(\rho(z,t)))
\\ & + \int_0^t\int_{(\TT^d)^3}\int_\R F(y,t)\Phi^\frac{1}{2}(\rho(y,t))\kappa^\ve(y-x)\left(\partial_\xi\psi\right)(x,\rho^\ve(x,t))\mathbf{1}_{A_0}(y,t)\chi(z,\xi,t)\nabla_x \kappa^\ve(z-x)\phi^\eta(\xi)\Big).
\end{aligned}\end{equation}
The two terms of on the righthand side of \eqref{equiv_012} will be treated separately.  For the first term on the righthand side of \eqref{equiv_012}, since the definition of $\phi^\eta$ and \eqref{skel_continuity_7} prove that $\nicefrac{2\Phi^\frac{1}{2}(\rho)}{\Phi'(\rho)}(1-\phi^\eta(\rho))$ is bounded, since the definition of $A_0$ proves that $\Phi^\frac{1}{2}(\rho(y,t))$ is bounded on $A_0$, and since \eqref{skel_continuity_3} implies that $\lim_{\xi\rightarrow 0^+}\left(\nicefrac{\Phi(\xi)}{\Phi'(\xi)}\right)=0,$ it follows by the dominated convergence theorem, $\psi\in\C^\infty_c(\TT^d\times\R)$, the definition of $\{\phi^\eta\}_{\eta\in(0,1)}$, and \eqref{skel_continuity_3} that
\begin{equation}\label{equiv_084}\begin{aligned}
& \lim_{\eta\rightarrow 0}\left[\lim_{\ve\rightarrow 0}\left(\int_0^t\int_{(\TT^d)^3}F(y,t)\Phi^\frac{1}{2}(\rho(y,t))\kappa^\ve(y-x)\left(\partial_\xi\psi\right)(x,\rho^\ve(x,t))\mathbf{1}_{A_0}(y,t)\right.\right.
\\ & \quad \quad \quad \quad \quad \cdot \left.\left.\nabla_z\Phi^\frac{1}{2}(\rho(z,t))\kappa^\ve(z-x)\frac{2\Phi^\frac{1}{2}(\rho(z,t))}{\Phi'(\rho(z,t))}(1-\phi^\eta(\rho(z,t)))\dz\dy\dx\dt\right)\right]
\\ & = \lim_{\eta\rightarrow 0}\left[\int_0^t\int_{\TT^d}F(x,t)\frac{2\Phi(\rho(x,t))}{\Phi'(\rho(x,t))}\cdot \nabla\Phi^\frac{1}{2}(\rho(x,t))(\partial_\xi\psi)(x,\rho(x,t))(1-\phi^\eta(\rho(x,t)))\dx\dt\right]
\\ & = \int_0^t\int_{\TT^d}F(x,t)\frac{2\Phi(\rho(x,t))}{\Phi'(\rho(x,t))}\cdot\nabla\Phi^\frac{1}{2}(\rho(x,t))(\partial_\xi\psi)(x,\rho(x,t))\dx\dt.
\end{aligned}\end{equation}
It remains to treat the second term on the righthand side of \eqref{equiv_012}.

We have, for every $\ve,\eta\in(0,1)$,
\begin{equation}\label{equiv_15}\begin{aligned}
& \int_0^t\int_{(\TT^d)^3}\int_\R F(y,t)\Phi^\frac{1}{2}(\rho(y,t))\kappa^\ve(y-x)\mathbf{1}_{A_0}(y,t)
\\ & \quad \quad \quad \quad \quad \quad \cdot (\partial_\xi\psi)(x,\rho^\ve(x,t))\chi(z,\xi,t)\nabla_x \kappa^\ve(z-x)\phi^\eta(\xi)\dxi\dz\dy\dx\dt
\\ & = \int_0^t\int_{(\TT^d)^3}\int_\R\Big(F(y,t)\left(\Phi^\frac{1}{2}(\rho(y,t))-\Phi^\frac{1}{2}(\rho(x,t))\right)\kappa^\ve(y-x)\mathbf{1}_{A_0}(y,t)
\\ & \quad \quad \quad \quad \quad \cdot (\partial_\xi\psi)(x,\rho^\ve(x,t))\chi(z,\xi,t)\nabla_x \kappa^\ve(z-x)\phi^\eta(\xi)\Big)\dxi\dz\dy\dx\dt
\\ &  \quad + \int_0^t\int_{(\TT^d)^3}\int_\R \Big(F(y,t)\left(\Phi^\frac{1}{2}(\rho(x,t))-\Phi^\frac{1}{2}(\rho(z,t))\right)\kappa^\ve(y-x)\mathbf{1}_{A_0}(y,t)
\\ & \quad \quad \quad \quad \quad \cdot (\partial_\xi\psi)(x,\rho^\ve(x,t))\chi(z,\xi,t)\nabla_x \kappa^\ve(z-x)\phi^\eta(\xi)\Big)\dxi\dz\dy\dx\dt
\\ & \quad + \int_0^t\int_{(\TT^d)^3}\int_\R F(y,t)\Phi^\frac{1}{2}(\rho(z,t))\kappa^\ve(y-x)
\\ & \quad \quad \quad \quad \quad \quad \cdot (\partial_\xi\psi)(x,\rho^\ve(x,t))\chi(z,\xi,t)\nabla_x \kappa^\ve(z-x)\phi^\eta(\xi)\dxi\dz\dy\dx\dt.
\end{aligned}\end{equation}
The definition of the convolution kernel, the fundamental theorem of calculus, and $\psi\in\C^\infty_c(\TT^d\times\R)$ prove that there exists $c\in(0,\infty)$ such that the first term on the righthand side of \eqref{equiv_15} is equal to and bounded by
\begin{equation}\label{equiv_16}\begin{aligned}
& \Big| \int_0^t\int_{(\TT^d)^3}\int_\R \Big(F(y,t)\left(\int_0^1\nabla(\Phi^\frac{1}{2}(\rho))(x+s(y-x),t)\cdot(y-x)\ds\right)\kappa^\ve(y-x)\mathbf{1}_{A_0}(y,t)
\\ & \quad \quad \quad \quad \quad \cdot (\partial_\xi\psi)(x,\rho^\ve(x,t))\chi(z,\xi,t)\nabla_x \kappa^\ve(z-x)\phi^\eta(\xi)\Big)\dxi\dz\dy\dx\dt\Big|
\\ & \leq c   \int_0^t\int_{(\TT^d)^3}\int_\R\int_0^1 \Big(\abs{F(y,t)}\abs{\nabla(\Phi^\frac{1}{2}(\rho))(x+s(y-x),t)}\kappa^\ve(y-x)\mathbf{1}_{A_0}(y,t)
\\ & \quad \quad \quad \quad \quad \cdot  \chi(z,\xi,t)\abs{\ve \nabla_x \kappa^\ve(z-x)}\phi^\eta(\xi)\ds\dxi\dz\dy\dx\dt\Big).
\end{aligned}\end{equation}
The definition of $\{\phi^\eta\}_{\eta\in(0,1)}$ proves that, for every $\eta\in(0,1)$,
\begin{equation}\label{equiv_016} \int_\R\phi^\eta(\xi)\chi(z,\xi,t)\dxi\leq 2\eta.\end{equation}
It follows from \eqref{equiv_16}, \eqref{equiv_016}, H\"older's inequality, and the definition of the convolution kernels $\{\kappa^\ve\}_{\ve\in(0,1)}$ that there exists $c\in(0,\infty)$ independent of $\eta\in(0,1)$ such that
\begin{equation}\label{equiv_17}\begin{aligned}
& \limsup_{\ve\rightarrow 0}\left(\Big|\int_0^t\int_{(\TT^d)^3}\int_\R \Big(F(y,t)\left(\Phi^\frac{1}{2}(\rho(y,t))-\Phi^\frac{1}{2}(\rho(x,t))\right)\kappa^\ve(y-x)\mathbf{1}_{A_0}(y,t)\right.
\\ & \left.\quad \quad \quad \quad \quad \cdot (\partial_\xi\psi)(x,\rho^\ve(x,t))\chi(z,\xi,t)\nabla_x \kappa^\ve(z-x)\phi^\eta(\xi)\dxi\dz\dy\dx\dt\Big)\Big|\right)
\\ & \leq c\eta\limsup_{\ve\rightarrow 0}\left(\int_0^t\int_0^1\int_{(\TT^d)^2}\abs{F(y,t)}\abs{\nabla(\Phi^\frac{1}{2}(\rho))(x+s(y-x),t)}\kappa^\ve(y-x)\dx\dy\ds\dt\right)
\\ &\leq  c\eta \norm{F}_{L^2(\TT^d\times[0,T])}\norm{\nabla\Phi^\frac{1}{2}(\rho)}_{L^2(\TT^d\times[0,T])}.
\end{aligned}\end{equation}
The second term on the righthand side of \eqref{equiv_15} is treated identically.  It remains to treat the third term on the righthand side of \eqref{equiv_15}.

For every $\eta\in(0,1)$ let $\overline{\phi}^\eta\colon\R\rightarrow\R$ be defined by $\overline{\phi}^\eta(\xi)=\int_\R\phi^\eta(\xi)\dxi$.  The definitions of the kinetic function and of $\{\overline{\phi}^\eta\}_{\eta\in(0,1)}$ prove that the third term on the righthand side of \eqref{equiv_15} is equal to, omitting the integration variables,
\begin{equation}\label{equiv_18}
-\int_0^t\int_{(\TT^d)^3}F(y,t)\kappa^\ve(y-x)\mathbf{1}_{A_0}(y,t)(\partial_\xi\psi)(x,\rho^\ve(x,t))\Phi^\frac{1}{2}(\rho(z,t))\overline{\phi}^\eta(\rho(z,t))\nabla_z \kappa^\ve(z-x).
\end{equation}
Since $\overline{\phi}^\eta(\rho(z,t))=\overline{\phi}^\eta(((\Phi^\frac{1}{2})^{-1})(\Phi^\frac{1}{2}(\rho(z,t))))$,
the $L^2([0,T];H^1(\TT^d))$ regularity of $\Phi^\frac{1}{2}(\rho)$, \eqref{equiv_6}, the definition of $\{\phi^\eta\}_{\eta\in(0,1)}$, the definition of $\{\overline{\phi}^\eta\}_{\eta\in(0,1)}$, and \eqref{equiv_18} prove that, after integrating by parts in $z\in\TT^d$, omitting the integration variables, \eqref{equiv_18} is equal to
\begin{equation}\label{equiv_19}\begin{aligned}
&\int_0^t\int_{(\TT^d)^3}F(y,t)\kappa^\ve(y-x)\mathbf{1}_{A_0}(y,t)(\partial_\xi\psi)(x,\rho^\ve(x,t))\cdot \nabla_z\Phi^\frac{1}{2}(\rho(z,t))\overline{\phi}^\eta(\rho(z,t))\kappa^\ve(z-x)
\\ & + \int_0^t\int_{(\TT^d)^3}F(y,t)\kappa^\ve(y-x)\mathbf{1}_{A_0}(y,t)(\partial_\xi\psi)(x,\rho^\ve(x,t))\frac{2\Phi(\rho(z,t))}{\Phi'(\rho(z,t)}\nabla\Phi^\frac{1}{2}(\rho(z,t))\phi^\eta(\rho(z,t))\kappa^\ve(z-x).
\end{aligned}\end{equation}
The definition of $\{\phi^\eta\}_{\eta\in(0,1)}$ proves that, for every $\eta\in(0,1)$, for every $\xi\in\R$,
\begin{equation}\label{equiv_020}\overline{\phi}^\eta(\xi)\leq 2\eta,\end{equation}
and since \eqref{skel_continuity_3} and the definition of $\{\phi^\eta\}_{\eta\in(0,1)}$ prove that there exists $c\in(0,\infty)$ independent of $\eta\in(0,1)$ such that, for every $(x,t)\in\TT^d\times[0,T]$,
\begin{equation}\label{equiv_0020}\frac{2\Phi(\rho(x,t))}{\Phi'(\rho(x,t))}\phi^\eta(\rho(x,t))\leq c\eta,\end{equation}
it follows from \eqref{equiv_19}, \eqref{equiv_020}, \eqref{equiv_0020}, $\psi\in\C^\infty_c(\TT^d\times\R)$, and H\"older's inequality that there exists $c\in(0,\infty)$ independent of $\eta\in(0,1)$ such that, for every $\ve,\eta\in(0,1)$, omitting the integration variables, the first term of \eqref{equiv_19} is bounded by
\begin{equation}\label{equiv_00020}\begin{aligned}
&c\eta\int_0^t\int_{(\TT^d)^3}\abs{F(y,t)}\kappa^\ve(y-x)\mathbf{1}_{A_0}(y,t)\abs{\nabla\Phi^\frac{1}{2}(\rho(z,t))}\kappa^\ve(x-z)
\\ & \leq c\eta\norm{F}_{L^2(\TT^d\times[0,T])}\norm{\nabla\Phi^\frac{1}{2}(\rho)}_{L^2(\TT^d\times[0,T])}.
\end{aligned}\end{equation}
In combination \eqref{equiv_15}, \eqref{equiv_17}, and \eqref{equiv_00020} prove that
\begin{equation}\label{equiv_000020}
\begin{aligned}
 & \limsup_{\eta\rightarrow 0}\Big(\limsup_{\ve\rightarrow 0}\Big(\Big|\int_0^t\int_{(\TT^d)^3}\int_\R F(y,t)\Phi^\frac{1}{2}(\rho(y,t))\kappa^\ve(y-x)\mathbf{1}_{A_0}(y,t)
\\ & \quad \quad \quad \quad \quad \quad \cdot (\partial_\xi\psi)(x,\rho^\ve(x,t))\chi(z,\xi,t)\nabla_x \kappa^\ve(z-x)\phi^\eta(\xi)\dxi\dz\dy\dx\dt\Big|\Big)\Big)=0.
\end{aligned}\end{equation}
In combination \eqref{equiv_012}, \eqref{equiv_084}, and \eqref{equiv_000020} complete the proof.  \end{proof}

\section{Existence of weak solutions}\label{sec_existence}

In this section, we will prove that there exists a weak solution to the skeleton equation
\begin{equation}\label{skeleton_equation}\partial_t\rho = \Delta \Phi(\rho)-\nabla\cdot(\Phi^\frac{1}{2}(\rho)g),\end{equation}
for controls $g\in L^2(\TT^d\times[0,T];\R^d)$ and initial data $\rho_0\in\Ent_\Phi(\TT^d)$ in the sense of Definition~\ref{classical_weak}.  The existence of renormalized kinetic solutions (see Definition~\ref{skel_sol_def}) is then a consequence of Theorem~\ref{equiv}.  However, we emphasize that these methods can also be used to prove the existence of renormalized kinetic solutions in the sense of Definition~\ref{skel_sol_def}, but we omit these details since this more general result will not be used here.  The section is broken into three subsections.  In Section~\ref{subsection_preliminaries} we introduce the assumptions.  We prove the existence of a solution in Section~\ref{subsection_existence}, and we prove the weak-strong continuity property of the equation in Section~\ref{subsection_weak_strong}.

\subsection{Preliminaries}\label{subsection_preliminaries}  In this section, we introduce in Assumption~\ref{as_compact} conditions on $\Phi$ that will be used to guarantee the compactness of approximate solutions.  These conditions are satisfied by every fast diffusion and porous media nonlinearities.  In particular, for condition (ii), case \eqref{5_00} is satisfied by fast diffusion and case \eqref{5_0} by porous media nonlinearities.  Lemmas~\ref{as_interpolate}, \ref{as_pre}, and \ref{as_phi_reg} collect some important consequences of Assumption~\ref{as_compact}.

\begin{assumption}\label{as_compact}  Let $\Phi\in\C([0,\infty))\cap\C^1_{\textrm{loc}}((0,\infty)))$ satisfy $\Phi(0)=0$ with $\Phi'(\xi)>0$ for every $\xi\in(0,\infty)$.  Assume that $\Phi$ satisfies the following four properties.

\begin{enumerate}[(i)]
\item There exists $m\in[1,\infty)$ and $c\in(0,\infty)$ such that
\[\Phi(\xi)\leq c(1+\xi^m)\;\;\textrm{for every}\;\;\xi\in[0,\infty).\]
\item Either there exists $c\in(0,\infty)$ and $\theta\in[0,\nicefrac{1}{2}]$ such that
\begin{equation}\label{5_00} \frac{\Phi'(\xi)}{\Phi^\frac{1}{2}(\xi)}\leq c\xi^{-\theta}\;\;\textrm{for every}\;\;\xi\in(0,\infty),\end{equation}
or there exists $c\in(0,\infty)$ and $q\in[1,\infty)$ such that
\begin{equation}\label{5_0}\abs{\xi-\xi'}^q\leq c\abs{\Phi^\frac{1}{2}(\xi)-\Phi^\frac{1}{2}(\xi')}^2\;\;\textrm{for every}\;\;\xi,\xi'\in[0,\infty).\end{equation}
\item There exists $c\in(0,\infty)$ such that
\[\Phi'(\xi)\leq c(1+\xi+\Phi(\xi))\;\;\textrm{for every}\;\;\xi\in(0,\infty).\]
\item We have that $\log(\Phi)$ is locally integrable on $[0,\infty)$.
\end{enumerate}
\end{assumption}

\begin{lem}\label{as_interpolate}  Let $\Phi\in\C([0,\infty))\cap\C^1_{\textrm{loc}}((0,\infty))$ satisfy Assumption~\ref{as_compact}.  If a nonnegative $\rho\in L^\infty([0,T];L^1(\TT^d))$ satisfies $\nabla\Phi^\frac{1}{2}(\rho)\in L^2([0,T];L^2(\TT^d))$, then for every $\ve\in(0,1)$ there exists $c\in(0,\infty)$ such that
\[\norm{\Phi^\frac{1}{2}(\rho)}^2_{L^2([0,T];H^1(\TT^d))}\leq c\left(1+\norm{\rho}_{L^\infty([0,T];L^1(\TT^d))}^m\right)+\ve\norm{\nabla\Phi^\frac{1}{2}(\rho)}^2_{L^2([0,T];L^2(\TT^d))}.\]
\end{lem}

\begin{proof}  The proof is a consequence of the Sobolev embedding theorem and an interpolation estimate.  The details can be found in \cite[Lemma~5.4]{FehGes21}.  \end{proof}

\begin{lem}\label{as_pre}  Let $\Phi\in\C([0,\infty))\cap\C^1_{\textrm{loc}}((0,\infty))$ satisfy Assumption~\ref{as_compact} and let $\{\rho_n\}_{n\in\N}$ be a sequence that satisfies, for some $c\in(0,\infty)$ and $s\geq \nicefrac{d}{2}+1$ independent of $n$,
\[\norm{\rho_n}_{L^\infty([0,T];L^1(\TT^d))}+\norm{\Phi^\frac{1}{2}(\rho_n)}_{L^2([0,T];H^1(\TT^d))}+\norm{\partial_t \rho_n}_{L^1([0,T];H^{-s}(\TT^d))}\leq c.\]
Then,
\[\{\rho_n\}_{n\in\mathbb{N}}\;\;\textrm{is relatively pre-compact in}\;\;L^1([0,T];L^1(\TT^d)),\]
and
\[\{\Phi^\frac{1}{2}(\rho_n)\}_{n\in\N}\;\;\textrm{is relatively pre-compact in}\;\;L^2([0,T];L^2(\TT^2)).\]
\end{lem}

\begin{proof}  The proof is a consequence of the Aubin-Lions-Simon Lemma \cite{Aubin,pLions,Simon}, the compact embedding of the fractional Sobolev space $W^{\beta,1}(\TT^d)$ into $L^1(\TT^d)$, the continuous embedding of $L^1(\TT^d)$ into $H^{-s}(\TT^d)$ for every $s\geq \nicefrac{d}{2}+1$, and \cite[Lemma~5.11]{FehGes21}.  \end{proof}

\begin{lem}\label{as_phi_reg}  Let $\Phi\in\C([0,\infty))\cap\C^1_{\textrm{loc}}((0,\infty))$ be strictly increasing with $\Phi(0)=0$.  Then, there exist nondecreasing functions
\[\{\Phi^{\frac{1}{2},\eta}\colon[0,\infty)\rightarrow[0,\infty)\}_{\eta\in(0,1)}\subseteq\C^\infty([0,\infty)),\]
which satisfy the following four properties.
\begin{enumerate}[(i)]
\item For every $\eta\in(0,1)$,
\begin{equation}\label{fd_00}\Phi^{\frac{1}{2},\eta}(0)=0.\end{equation}
\item There exists $c\in(0,\infty)$ such that, for every $\eta\in(0,1)$ and $\xi\in[0,\infty)$,
\begin{equation}\label{fd_0} 0\leq \Phi^{\frac{1}{2},\eta}(\xi)\leq c\Phi^\frac{1}{2}(\xi)\;\;\textrm{and}\;\;0\leq \left(\Phi^{\frac{1}{2},\eta}\right)'(\xi)\leq c\left(\Phi^\frac{1}{2}\right)'(\xi).\end{equation}
\item For every $\eta\in(0,1)$ there exists $c\in(0,\infty)$ depending on $\eta$ such that
\begin{equation}\label{fd_1}\norm{\Phi^{\frac{1}{2},\eta}}_{L^\infty([0,\infty))}+\norm{(\Phi^{\frac{1}{2},\eta})'}_{L^\infty([0,\infty))}\leq c.\end{equation}
\item  For every compact set $A\subseteq[0,\infty)$,
\begin{equation}\label{fd_2}\lim_{\eta\rightarrow 0}\norm{\Phi^{\frac{1}{2},\eta}-\Phi^\frac{1}{2}}_{L^\infty(A)}=0.\end{equation}
\end{enumerate}
\end{lem}

\begin{proof}  Extend $\Phi\in\C([0,\infty))\cap\C^1_{\textrm{loc}}((0,\infty))$ to $\R$ by defining $\Phi(\xi)=0$ for every $\xi\in(-\infty,0)$.  For every $\ve\in(0,1)$ let $\kappa^\ve\in\C^\infty_c(\R)$ be a standard one-dimensional convolution kernel of scale $\ve\in(0,1)$ that is supported on $(-\ve,0)$.  For every $\eta,\ve\in(0,1)$ define $\Phi^{\frac{1}{2},\eta,\ve}=(((\Phi^\frac{1}{2}\wedge \Phi(\nicefrac{1}{\eta}))*\kappa^{\ve}).$
It follows by definition that, for every $\eta\in(0,1)$ there exists $\ve_\eta\in(0,1)$ such that $\Phi^{\frac{1}{2},\eta}=\Phi^{\frac{1}{2},\eta,\ve_\eta}$ satisfies \eqref{fd_00}, \eqref{fd_0}, \eqref{fd_1}, and \eqref{fd_2}.  This completes the proof.  \end{proof}

\subsection{Existence of solutions}\label{subsection_existence}  In this section, we will construct a solution to \eqref{skeleton_equation}.  In Proposition~\ref{smooth_exist}, we will first prove that there exists a weak solution to the regularized equation, for $\eta_1,\eta_2,\eta_3\in(0,1)$,
\[\partial_t\rho=\Delta\Phi^{\eta_1}(\rho)+\eta_2\Delta\rho-\nabla\cdot (\Phi^{\frac{1}{2},\eta_3}(\rho)g),\]
where $\Phi^{\eta_1}=(\Phi^{\frac{1}{2},\eta_1})^2$, and then pass to the limit $\eta_1\rightarrow 0$.  In Proposition~\ref{gen_exist} below, based on the estimates of Proposition~\ref{frac_est} below, we pass to the limits $\eta_2,\eta_3\rightarrow 0$ to complete the proof.

\begin{prop}\label{smooth_exist} Let $T\in(0,\infty)$, let $\Phi\in \C([0,\infty))\cap\C^1_{\textrm{loc}}((0,\infty))$ satisfy Assumption~\ref{as_compact}, and let $\{\Phi^{\frac{1}{2},\eta}\}_{\eta\in(0,1)}$ be defined in Lemma~\ref{as_phi_reg}.  Let $\overline{\Phi}\colon[0,\infty)\rightarrow[0,\infty)$ be defined by $\overline{\Phi}(0)=0$ and $\overline{\Phi}'(\xi)=\Phi(\xi)$.  Then, for every nonnegative $\rho_0\in L^\infty(\TT^d)$, $g\in L^2(\TT^d\times[0,T];\R^d)$, and $\eta_2,\eta_3\in(0,1)$ there exists a nonnegative weak solution $\rho\in L^2([0,T];H^1(\TT^d))$ of the equation
\begin{equation}\label{se_1} \partial_t\rho = \Delta\Phi(\rho)+\eta_2\Delta\rho-\nabla\cdot (\Phi^{\frac{1}{2},\eta_3}(\rho)g)\;\; \textrm{in}\;\;\TT^d\times(0,T)\;\;\textrm{with}\;\;\rho(\cdot,0)=\rho_0,\end{equation}
which satisfies that, for almost every $t\in[0,T]$,
\begin{equation}\label{se_00002}\norm{\rho(\cdot,t)}_{L^1(\TT^d)}=\norm{\rho_0}_{L^1(\TT^d)},\end{equation}
that, for some $c\in(0,\infty)$ independent of $\eta_2,\eta_3\in(0,1)$,
\begin{equation}\label{se_2}\begin{aligned}
& \norm{\rho}^2_{L^\infty([0,T];L^2(\TT^d))}+\eta_2 \norm{\nabla\rho}^2_{L^2([0,T];L^2(\TT^d;\R^d))}
\\ & \leq c\left(\norm{\rho_0}^2_{L^2(\TT^d)}+\frac{1}{\eta_2}\norm{\Phi^{\frac{1}{2},\eta_3}}_{L^\infty([0,\infty))}^2\norm{g}^2_{L^2(\TT^d\times[0,T];\R^d)}\right),
\end{aligned}\end{equation}
and that, for some $c\in(0,\infty)$ independent of $\eta_2,\eta_3\in(0,1)$,
\begin{equation}\label{se_0002}\begin{aligned}
& \norm{\overline{\Phi}(\rho)}_{L^\infty([0,T];L^1(\TT^d))}+\norm{\nabla\Phi(\rho)}^2_{L^2([0,T];L^2(\TT^d;\R^d))}
\\ & \leq c\left(\norm{\overline{\Phi}(\rho_0)}_{L^1(\TT^d)}+\norm{\Phi^{\frac{1}{2},\eta_3}}^2_{L^\infty([0,\infty))}\norm{g}^2_{L^2(\TT^d\times[0,T];\R^d)}\right).
\end{aligned}\end{equation}
\end{prop}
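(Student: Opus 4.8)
The plan is to solve first, for fixed $\eta_1,\eta_2,\eta_3\in(0,1)$, the nondegenerate regularized problem
\[
\partial_t\rho=\Delta\Phi^{\eta_1}(\rho)+\eta_2\Delta\rho-\nabla\cdot\bigl(\Phi^{\frac12,\eta_3}(\rho)g\bigr)\ \ \textrm{in}\ \ \TT^d\times(0,T),\qquad \rho=\rho_0\ \ \textrm{on}\ \ \TT^d\times\{0\},
\]
with $\Phi^{\eta_1}:=(\Phi^{\frac12,\eta_1})^2$, and then to send $\eta_1\to0$. By Lemma~\ref{ass_phi_reg}, $\Phi^{\frac12,\eta_1}$ is smooth, nondecreasing, vanishes at $0$, and has $\Phi^{\frac12,\eta_1}$ and $(\Phi^{\frac12,\eta_1})'$ bounded; hence $\Phi^{\eta_1}$ is smooth, nondecreasing, globally Lipschitz, with $\Phi^{\eta_1}(0)=(\Phi^{\eta_1})'(0)=0$, and $\Phi^{\frac12,\eta_3}$ is bounded and Lipschitz. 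Extending both nonlinearities by zero on $(-\infty,0)$ (which preserves $\C^1$, resp.\ continuity), the regularized equation is a uniformly parabolic quasilinear equation with bounded coefficients, and existence of a weak solution $\rho^{\eta_1}\in L^2([0,T];H^1(\TT^d))$ with $\partial_t\rho^{\eta_1}\in L^2([0,T];H^{-1}(\TT^d))$ follows by a Galerkin approximation in the eigenfunctions of $-\Delta$ on $\TT^d$: the $\eta_2\Delta$-term supplies coercivity, the $\eta_1$-uniform estimates below supply the compactness needed to pass to the Galerkin limit, and the nonlinear terms are identified using their (local) Lipschitz continuity.

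\textbf{A priori estimates.} Testing the regularized equation with $\rho^{\eta_1}$, discarding the nonnegative term $\int(\Phi^{\eta_1})'(\rho^{\eta_1})\abs{\nabla\rho^{\eta_1}}^2$, and absorbing $\int\Phi^{\frac12,\eta_3}(\rho^{\eta_1})g\cdot\nabla\rho^{\eta_1}$ by Young's inequality (weight $\eta_2/2$ on the gradient) and $\abs{\Phi^{\frac12,\eta_3}}\le\norm{\Phi^{\frac12,\eta_3}}_{L^\infty}$ yields \eqref{se_2} with a universal constant. Testing with $\Phi^{\eta_1}(\rho^{\eta_1})$ gives
\[
\frac{d}{dt}\int_{\TT^d}\overline{\Phi^{\eta_1}}(\rho^{\eta_1})+\int_{\TT^d}\abs{\nabla\Phi^{\eta_1}(\rho^{\eta_1})}^2+\eta_2\int_{\TT^d}(\Phi^{\eta_1})'(\rho^{\eta_1})\abs{\nabla\rho^{\eta_1}}^2=\int_{\TT^d}\Phi^{\frac12,\eta_3}(\rho^{\eta_1})g\cdot\nabla\Phi^{\eta_1}(\rho^{\eta_1}),
\]
where $\overline{\Phi^{\eta_1}}(\xi)=\int_0^\xi\Phi^{\eta_1}$; absorbing the right-hand side as before, together with $\overline{\Phi^{\eta_1}}\le c\,\overline{\Phi}$ (from \eqref{fd_0}) and $\overline{\Phi}(\rho_0)\in L^1$ (since $\rho_0\in L^\infty$), produces the $\eta_1$-uniform form of \eqref{se_0002}. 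Testing with the constant $1$ gives $\int_{\TT^d}\rho^{\eta_1}(\cdot,t)=\int_{\TT^d}\rho_0$ for all $t$; testing with $\min(\rho^{\eta_1},0)$ and using $\Phi^{\eta_1}=\Phi^{\frac12,\eta_3}=(\Phi^{\eta_1})'=0$ on $(-\infty,0)$ together with $\rho_0\ge0$ forces $(\rho^{\eta_1})_-\equiv0$, i.e.\ $\rho^{\eta_1}\ge0$, whence $\norm{\rho^{\eta_1}(\cdot,t)}_{L^1}=\norm{\rho_0}_{L^1}$ and $\rho^{\eta_1}$ solves the original (unextended) equation. Finally, reproducing the entropy computation of Section~\ref{sec_a_priori} (testing with $\log(\Phi^{\eta_1}(\rho^{\eta_1})+\delta)$, $\delta\to0$), and invoking Assumption~\ref{ass_log} to bound $\Psi_\Phi$ from below and Assumption~\ref{ass_interpolate} together with the conserved $L^1$ mass to upgrade the gradient bound to a full one, yields $\Phi^{\frac12,\eta_1}(\rho^{\eta_1})$ bounded in $L^2([0,T];H^1(\TT^d))$ uniformly in $\eta_1$; combined with the equation this gives a uniform $L^2([0,T];H^{-1}(\TT^d))$ bound on $\partial_t\rho^{\eta_1}$.

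\textbf{Passage to the limit.} The bounds above furnish a subsequence with $\rho^{\eta_1}\rightharpoonup\rho$ in $L^2([0,T];H^1(\TT^d))$ and $\Phi^{\eta_1}(\rho^{\eta_1})\rightharpoonup v$ in $L^2([0,T];H^1(\TT^d))$, and, by the Aubin--Lions--Simon lemma, $\rho^{\eta_1}\to\rho$ strongly in $L^2(\TT^d\times[0,T])$ and a.e. Since $\Phi^{\eta_1}\to\Phi$ locally uniformly (the square of \eqref{fd_2}), $\Phi^{\eta_1}(\rho^{\eta_1})\to\Phi(\rho)$ a.e.; the uniform $L^\infty_tL^1$ bound on $\overline{\Phi^{\eta_1}}(\rho^{\eta_1})$ gives, via $\Phi^{\eta_1}(\xi)\le 2R^{-1}\overline{\Phi^{\eta_1}}(\xi)+\Phi^{\eta_1}(R)$ for $\xi>R$, the uniform integrability of $\{\Phi^{\eta_1}(\rho^{\eta_1})\}$, so by Vitali $\Phi^{\eta_1}(\rho^{\eta_1})\to\Phi(\rho)$ in $L^1(\TT^d\times[0,T])$; hence $v=\Phi(\rho)\in L^2([0,T];H^1(\TT^d))$ and in particular $\nabla\Phi(\rho)\in L^2$, while $\Phi^{\frac12,\eta_3}(\rho^{\eta_1})g\to\Phi^{\frac12,\eta_3}(\rho)g$ in $L^1$ and boundedly in $L^2$. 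Passing to the limit in the weak formulation of the regularized equation identifies $\rho$ as a nonnegative weak solution of \eqref{se_1}; the mass identity \eqref{se_00002} passes to the limit, and \eqref{se_2}, \eqref{se_0002} follow from weak lower semicontinuity of norms and Fatou's lemma (using $\overline{\Phi^{\eta_1}}\to\overline{\Phi}$).

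\textbf{Main difficulty.} The genuinely delicate step is the identification of the weak $L^2([0,T];H^1)$ limit of $\Phi^{\eta_1}(\rho^{\eta_1})$ as $\Phi(\rho)$, together with the claims $\Phi(\rho),\Phi^{\frac12}(\rho)\in L^2([0,T];H^1(\TT^d))$, because $\Phi$ is only $W^{1,1}_{\textrm{loc}}$ — not globally Lipschitz — and degenerates at $\rho=0$; this is precisely where the entropy estimate \eqref{formal_est}, the de la Vall\'ee Poussin type argument built on the $\overline{\Phi}$-bound, and Assumptions~\ref{ass_log} and \ref{ass_interpolate} enter. The remaining ingredients — solvability of the nondegenerate regularization, the energy identities, nonnegativity, and mass conservation — are routine.
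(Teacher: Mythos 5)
Your proposal is correct and follows essentially the same strategy as the paper: regularize the nonlinearity to $\Phi^{\eta_1}=(\Phi^{\frac12,\eta_1})^2$, solve the nondegenerate problem, derive the three $\eta_1$-uniform energy estimates, and pass to the limit $\eta_1\to 0$ by Aubin--Lions--Simon. There are two points worth comparing. First, you produce the nondegenerate solution by Galerkin approximation, whereas the paper freezes the coefficients $v\mapsto (\Phi^{\eta_1})'(v),\Phi^{\frac12,\eta_3}(v)$, solves the resulting linear parabolic problem, and applies the Schauder fixed-point theorem with the same compactness. Both are standard and work here; the Galerkin route is more self-contained, the Schauder route reuses well-posedness of linear parabolic equations. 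Second, the step where you invoke the entropy estimate (testing with $\log(\Phi^{\eta_1}(\rho^{\eta_1})+\delta)$ and Assumptions~\ref{ass_log}, \ref{ass_interpolate}) to bound $\Phi^{\frac12,\eta_1}(\rho^{\eta_1})$ in $L^2([0,T];H^1(\TT^d))$ uniformly in $\eta_1$ is not needed at this stage: since $\eta_2>0$ is fixed throughout the $\eta_1\to0$ limit, estimate \eqref{se_2} alone gives $\rho^{\eta_1}$ bounded in $L^\infty_tL^2_x\cap L^2_tH^1_x$, and together with \eqref{se_0002} and the equation this already controls $\partial_t\rho^{\eta_1}$ in $L^2([0,T];H^{-1}(\TT^d))$ (cf.\ \eqref{se_5}) — the entropy estimate is precisely what the subsequent Proposition~\ref{frac_est} supplies once one needs bounds that survive $\eta_2\to0$. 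On the other hand, your explicit uniform-integrability argument (via the $L^\infty_tL^1_x$ control of $\overline{\Phi^{\eta_1}}(\rho^{\eta_1})$) to identify the weak $L^2$ limit of $\nabla\Phi^{\eta_1}(\rho^{\eta_1})$ as $\nabla\Phi(\rho)$, and your proof of nonnegativity by testing with $\min(\rho^{\eta_1},0)$ after extending the nonlinearities by zero, make explicit two points the paper leaves implicit; just be careful that the precise de la Vall\'ee Poussin type inequality you quote for $\Phi^{\eta_1}$ should read along the lines of $\Phi^{\eta_1}(\xi)\le\overline{\Phi^{\eta_1}}(\xi+1)$ for monotone $\Phi^{\eta_1}$, which then combines with the Chebyshev bound from the $L^\infty_tL^2_x$ estimate to give the needed equi-integrability of $\{\Phi^{\eta_1}(\rho^{\eta_1})\}$.
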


\begin{proof}  Let $\eta_2,\eta_3\in(0,1)$.  We first consider the equation, for every $\eta_1\in(0,1)$,
\begin{equation}\label{se_000} \partial_t\rho^{\eta_1} = \Delta\Phi^{\eta_1}(\rho^{\eta_1})+\eta_2\Delta\rho^{\eta_1}-\nabla\cdot (\Phi^{\frac{1}{2},\eta_3}(\rho^{\eta_1})g)\;\; \textrm{in}\;\;\TT^d\times(0,T)\;\;\textrm{with}\;\;\rho(\cdot,0)=\rho_0,\end{equation}
for $\Phi^{\eta_1}=(\Phi^{\frac{1}{2},\eta_1})^2$.  For fixed $\eta_1,\eta_2,\eta_3\in(0,1)$ let
\[S\colon L^2([0,T];L^2(\TT^d))\rightarrow L^2([0,T];H^1(\TT^d))\subseteq L^2([0,T];L^2(\TT^d)),\]
be defined for every $v\in L^2([0,T];L^2(\TT^d))$ as the unique weak solution of the equation
\[\partial_t S(v) =\nabla\cdot\left((\Phi^{\eta_1})'(v)\nabla S(v)+\eta_2\nabla S(v)\right)-\nabla\cdot\left(\Phi^{\frac{1}{2},\eta_3}(v)g\right)\;\; \textrm{in}\;\;\TT^d\times(0,T),\]
with $S(v)(\cdot,0) =\rho_0$.  Estimates of the type \eqref{se_3} below prove that there exists $c\in(0,\infty)$ such that
\begin{equation}\label{se_00003}\begin{aligned}
& \norm{S(v)}^2_{L^\infty([0,T];L^2(\TT^d))}+\eta_2\norm{\nabla S(v)}^2_{L^2([0,T];L^2(\TT^d;\R^d))}
\\ & \leq c\left(\norm{\rho_0}^2_{L^2(\TT^d)}+\frac{1}{\eta_2}\norm{\Phi^{\frac{1}{2},\eta_3}}^2_{L^\infty([0,\infty))}\norm{g}^2_{L^2(\TT^d\times[0,T];\R^d)}\right),
\end{aligned} \end{equation}
and estimates of the type \eqref{se_5} prove that
\begin{equation}\label{se_000003}\begin{aligned}
& \norm{\partial_t S(v)}_{L^2([0,T]:H^{-1}(\TT^d))}
\\ & \leq \left(\eta_2^\frac{1}{2}+\norm{(\Phi^{\eta_1})'}_{L^\infty([0,\infty))}\right)\norm{\eta_2^\frac{1}{2}\nabla S(v)}_{L^2([0,T];L^2(\TT^d;\R^d))}+\norm{\Phi^{\frac{1}{2},\eta_3}}_{L^\infty([0,\infty))}\norm{g}_{L^2([0,T];L^2(\TT^d;\R^d))}.
\end{aligned}\end{equation}
The Aubin-Lions-Simons lemma \cite{Aubin,pLions,Simon}, \eqref{se_00003}, and \eqref{se_000003} prove that the image of $S$ lies in a compact subset of $L^2([0,T];L^2(\TT^d))$.  The Schauder fixed point theorem therefore implies that $S$ has a fixed point, which proves the existence of a weak solution to \eqref{se_000}.

Let $\rho^{\eta_1}\in L^2([0,T];H^1(\TT^d))$ denote a weak solution of \eqref{se_000}.  Estimate \eqref{se_00002} follows from the nonnegativity of $\rho^{\eta_1}$ by testing \eqref{se_1} with the constant function $1$.  By testing \eqref{se_1} with $\rho^{\eta_1}$, it follows from H\"older's inequality, Young's inequality, and the fact that $\Phi^{\frac{1}{2},\eta_1}$ is non-decreasing that, for some $c\in(0,\infty)$ independent of $\eta_1,\eta_2,\eta_3\in(0,1)$,
\begin{equation}\label{se_3}\begin{aligned}
& \norm{\rho^{\eta_1}}^2_{L^\infty([0,T];L^2(\TT^d))}+\eta_2\norm{\nabla \rho^{\eta_1}}^2_{L^2([0,T];L^2(\TT^d;\R^d))}
\\ & \leq c\left(\norm{\rho_0}^2_{L^2(\TT^d)}+\frac{1}{\eta_2}\norm{\Phi^{\frac{1}{2},\eta_3}}^2_{L^\infty([0,\infty))}\norm{g}^2_{L^2(\TT^d\times[0,T];\R^d)}\right).
\end{aligned} \end{equation}
Let $\overline{\Phi}^{\eta_1}\colon[0,\infty)\rightarrow[0,\infty)$ denote the anti-derivative
\[\overline{\Phi}^{\eta_1}(\xi)=\int_0^\xi\Phi^{\eta_1}(\xi')\dxp.\]
After testing \eqref{se_1} with the composition $\Phi^{\eta_1}(\rho^{\eta_1})$, which is justified by $\rho^{\eta_1}\in L^2([0,T];H^1(\TT^d))$ and \eqref{fd_1}, it follows from H\"older's inequality, Young's inequality, the fact that $\Phi^{\frac{1}{2},\eta_1}$ is non-decreasing, and \eqref{fd_0} that, for some $c\in(0,\infty)$ independent of $\eta_1,\eta_2,\eta_3\in(0,1)$,
\begin{equation}\begin{aligned}\label{se_4}
& \norm{\overline{\Phi}^{\eta_1}(\rho^{\eta_1})}_{L^\infty([0,T];L^1(\TT^d))}+\norm{\nabla\Phi^{\eta_1}(\rho^{\eta_1})}^2_{L^2([0,T];L^2(\TT^d;\R^d))}
\\ & \leq c\left(\norm{\overline{\Phi}^{\eta_1}(\rho_0)}_{L^1(\TT^d)}+\norm{\Phi^{\frac{1}{2},\eta_3}}^2_{L^\infty([0,\infty))}\norm{g}^2_{L^2(\TT^d\times[0,T];\R^d)}\right)
\\  & \leq c\left(\norm{\overline{\Phi}(\rho_0)}_{L^1(\TT^d)}+\norm{\Phi^{\frac{1}{2},\eta_3}}^2_{L^\infty([0,\infty))}\norm{g}^2_{L^2(\TT^d\times[0,T];\R^d)}\right).
\end{aligned}\end{equation}
It follows from the equation that, for some $c\in(0,\infty)$ independent of $\eta_1,\eta_2,\eta_3\in(0,1)$,
\begin{equation}\label{se_5}\begin{aligned}
& \norm{\partial_t\rho^{\eta_1}}_{L^2([0,T];H^{-1}(\TT^d))}
\\ & \leq \norm{\nabla\Phi^{\eta_1}(\rho^{\eta_1})}_{L^2([0,T];L^2(\TT^d))}+\eta_2\norm{\nabla\rho^{\eta_1}}_{L^2([0,T];L^2(\TT^d))}+\norm{\Phi^{\frac{1}{2},\eta_3}}_{L^\infty([0,\infty))}\norm{g}_{L^2([0,T];L^2(\TT^d))}.
\end{aligned}\end{equation}
It follows from \eqref{fd_2}, \eqref{se_3}, \eqref{se_4}, \eqref{se_5}, and the Aubin-Lions-Simon lemma \cite{Aubin,pLions,Simon} that there exists $\rho\in L^2([0,T];H^1(\TT^d))$ such that, after passing to a subsequence $\{\eta_1^k\rightarrow 0\}_{k\in\N}$, as $k\rightarrow\infty$,
\begin{equation}\label{se_6}\rho^{\eta_1^k}\rightarrow \rho\;\;\textrm{strongly in}\;\;L^2([0,T];L^2(\TT^d)),\end{equation}
that
\begin{equation}\label{se_7}\rho^{\eta_1^k}\rightharpoonup \rho\;\;\textrm{weakly in}\;\;L^2([0,T];H^1(\TT^d)),\end{equation}
and that
\begin{equation}\label{se_8}\nabla\Phi^{\eta^k_1}(\rho^{\eta^k_1})\rightharpoonup\nabla\Phi(\rho)\;\;\textrm{weakly in}\;\;L^2([0,T];L^2(\TT^d;\R^d)).\end{equation}
In combination \eqref{se_6}, \eqref{se_7}, and \eqref{se_8} prove that $\rho\in L^2([0,T];H^1(\TT^d))$ is a nonnegative weak solution of \eqref{se_1}.  Estimate \eqref{se_2} follows from \eqref{se_3} and the weak lower-semicontinuity of the Sobolev norm.  Estimate \eqref{se_0002} follows from \eqref{fd_2}, \eqref{se_4}, \eqref{se_8}, and the weak lower-semicontinuity of the Sobolev norm.  This completes the proof.  \end{proof}

It remains to pass to the limit $\eta_2,\eta_3\rightarrow 0$ in \eqref{se_1}.  For this we will use the following estimate, which relies crucially on the nonnegativity of the initial data.  The estimate is based on testing the equation with $\log(\Phi(\rho))$, as already explained in Section~\ref{sec_a_priori}.  We recall, for every $\Phi\in\C([0,\infty))\cap\C^1_{\textrm{loc}}((0,\infty))$,
\begin{equation}\label{psi_phi}\Psi_\Phi(\xi)=\int_0^\xi\log(\Phi(\xi'))\dxip,\end{equation}
and we recall the space
\[\Ent_\Phi(\TT^d)=\left\{\rho_0\in L^1(\TT^d)\colon \rho_0\geq 0\;\;\textrm{a.e.}\;\;\textrm{and}\;\;\int_{\TT^d}\Psi_\Phi(\rho_0(x))\dx<\infty\right\}.\]
The general existence result follows immediately after the next proposition.

\begin{prop}\label{frac_est}  Let $T\in(0,\infty)$, let $\Phi\in\C([0,\infty))\cap\C^1_{\textrm{loc}}((0,\infty))$ satisfy Assumption~\ref{as_compact}, and let $\{\Phi^{\frac{1}{2},\eta}\}_{\eta\in(0,1)}$ be as in Lemma~\ref{as_phi_reg}.  Then, there exists $c\in(0,\infty)$ such that, for every nonnegative $\rho_0\in L^\infty(\TT^d)$ and $g\in L^2(\TT^d\times[0,T];\R^d)$, for every $\eta_2,\eta_3\in(0,1)$, the solution $\rho\in L^2([0,T];H^1(\TT^d))$ from Proposition~\ref{smooth_exist} satisfies, for almost every $t\in[0,T]$,
\begin{equation}\label{fe_1}
\int_{\TT^d}\Psi_\Phi(\rho(x,t))\dx+\int_0^t\int_{\TT^d}\abs{\nabla\Phi^\frac{1}{2}(\rho(x,s))}^2\dx\ds \leq c\left(\int_{\TT^d}\Psi_\Phi(\rho_0(x))\dx+\norm{g}^2_{L^2(\TT^d\times[0,T];\R^d)}\right).
\end{equation}
\end{prop}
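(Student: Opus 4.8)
The plan is to make rigorous, for the regularized equation \eqref{se_1} (with the genuine nonlinearity $\Phi$ in the diffusion), the formal entropy--entropy-dissipation computation of Section~\ref{sec_a_priori}: test \eqref{se_1} with a regularization of $\log(\Phi(\rho))$, discard the second-order term coming from $\eta_2\Delta\rho$, which contributes nonnegatively---this is exactly why the constant in \eqref{fe_1} does not depend on $\eta_2$---and absorb the noise term into the entropy dissipation. The only input beyond Proposition~\ref{smooth_exist} that keeps the estimate uniform is the bound $0\leq\Phi^{\frac{1}{2},\eta_3}(\rho)\leq c\,\Phi^\frac{1}{2}(\rho)$ from \eqref{fd_0}, which is uniform in $\eta_3$. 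The principal obstacle is that $\log(\Phi(\rho))$ is not directly an admissible test function: it is singular on $\{\rho=0\}$ (since $\Phi(0)=0$) and, because a priori $\rho\in L^\8([0,T];L^2(\TT^d))$ rather than $\rho\in L^\8$, possibly unbounded as well; matching this against the merely Sobolev-type regularity of $\rho$ forces a two-sided truncation whose limit has to be controlled at the origin using Assumption~\ref{ass_log} and at infinity using the nonnegativity of $\rho_0$.

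For $\delta\in(0,1)$ I would test \eqref{se_1} with $\psi_\delta(\rho):=\beta_\delta(\Phi(\rho))$, where $\beta_\delta\in\C^\infty([0,\8))$ is bounded, nondecreasing and globally Lipschitz, with $\beta_\delta'(s)=\nicefrac{\phi_\delta(s)}{s}$ for a cutoff $\phi_\delta$ satisfying $0\leq\phi_\delta\leq1$, $\phi_\delta=1$ on $[\delta,\nicefrac{1}{\delta}]$ and $\phi_\delta$ supported in $[\nicefrac{\delta}{2},\nicefrac{2}{\delta}]$, and chosen so that $\beta_\delta(s)\geq\log s$ for all $s\in(0,1]$. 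Since $\beta_\delta$ is Lipschitz of class $\C^1$ and constant on $[\nicefrac{2}{\delta},\8)$, the function $\psi_\delta(\rho)=\beta_\delta\big(\Phi(\rho)\wedge\nicefrac{2}{\delta}\big)$ is bounded and lies in $L^2([0,T];H^1(\TT^d))$, with spatial gradient $\beta_\delta'(\Phi(\rho))\nabla\Phi(\rho)\in L^2(\TT^d\times[0,T];\R^d)$ by \eqref{se_0002}; since moreover $\p_t\rho\in L^2([0,T];H^{-1}(\TT^d))$, which follows from the equation together with \eqref{se_2}, \eqref{se_0002} and \eqref{fd_1}, the function $\psi_\delta(\rho)$ is an admissible test function and the chain-rule identity $\frac{d}{dt}\int_{\TT^d}\Psi_\delta(\rho)=\langle\p_t\rho,\psi_\delta(\rho)\rangle$ holds, with $\Psi_\delta(\xi)=\int_0^\xi\psi_\delta(\xi')\dxip$. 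Testing, integrating in time, and discarding the nonnegative term $\eta_2\int_0^t\int_{\TT^d}\beta_\delta'(\Phi(\rho))\,\nabla\rho\cdot\nabla\Phi(\rho)\,\dx\,\ds$ (nonnegative since $\Phi$ is nondecreasing) yields, for a.e.\ $t\in[0,T]$,
\[
\begin{aligned}
\int_{\TT^d}\Psi_\delta(\rho(t))\,\dx&+\int_0^t\int_{\TT^d}\beta_\delta'(\Phi(\rho))\abs{\nabla\Phi(\rho)}^2\,\dx\,\ds\\
&\leq\int_{\TT^d}\Psi_\delta(\rho_0)\,\dx+c\int_0^t\int_{\TT^d}\abs{g}\,\Phi^\frac{1}{2}(\rho)\,\beta_\delta'(\Phi(\rho))\abs{\nabla\Phi(\rho)}\,\dx\,\ds,
\end{aligned}
\]
where \eqref{fd_0} has been used on the right. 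Since $\Phi^\frac{1}{2}(\rho)\,\beta_\delta'(\Phi(\rho))=\nicefrac{\phi_\delta(\Phi(\rho))}{\Phi^\frac{1}{2}(\rho)}\leq\big(\beta_\delta'(\Phi(\rho))\big)^\frac{1}{2}$ on $\{\rho>0\}$ (using $\phi_\delta\leq1$), the last integrand is at most $c\abs{g}\big(\beta_\delta'(\Phi(\rho))\abs{\nabla\Phi(\rho)}^2\big)^\frac{1}{2}$, and Young's inequality absorbs one half of the dissipation from the right at the cost of $c\norm{g}^2_{L^2(\TT^d\times[0,T];\R^d)}$, with $c$ depending only on the constant in \eqref{fd_0}.

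It then remains to pass to the limit $\delta\to0$. On $\{\rho>0\}$, where $\Phi^\frac{1}{2}(\rho)$ is positive, the chain rule for $\sqrt{\cdot}$ away from zero gives $\nabla\Phi^\frac{1}{2}(\rho)=\nicefrac{\nabla\Phi(\rho)}{2\Phi^\frac{1}{2}(\rho)}$, hence $\beta_\delta'(\Phi(\rho))\abs{\nabla\Phi(\rho)}^2=4\phi_\delta(\Phi(\rho))\abs{\nabla\Phi^\frac{1}{2}(\rho)}^2$, which increases monotonically to $4\abs{\nabla\Phi^\frac{1}{2}(\rho)}^2\mathbf{1}_{\{\rho>0\}}=4\abs{\nabla\Phi^\frac{1}{2}(\rho)}^2$, since $\Phi^\frac{1}{2}(\rho)$ vanishes a.e.\ with its gradient on $\{\rho=0\}$; by monotone convergence the dissipation term converges, and this in particular gives $\Phi^\frac{1}{2}(\rho)\in L^2([0,T];H^1(\TT^d))$ once Assumption~\ref{ass_interpolate} is invoked together with the mass conservation $\norm{\rho(\cdot,t)}_{L^1(\TT^d)}=\norm{\rho_0}_{L^1(\TT^d)}$ from \eqref{se_00002}. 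For the entropy terms, $\Psi_\delta\to\Psi_\Phi$ pointwise on $[0,\8)$; Assumption~\ref{ass_log} together with $\beta_\delta\geq\log$ on $(0,1]$ and $\beta_\delta\geq0$ on $[1,\8)$ forces $\Psi_\delta\geq-c$ uniformly in $\delta$, so Fatou's lemma controls the time-$t$ term, while $\int_{\TT^d}\Psi_\delta(\rho_0)\to\int_{\TT^d}\Psi_\Phi(\rho_0)$ by dominated convergence, the dominating function being integrable because $\rho_0\in L^\8(\TT^d)$ and, by Assumption~\ref{ass_log}, $\log\Phi\in L^1_{\textrm{loc}}([0,\8))$ with $\Psi_\Phi$ continuous and finite on $[0,\8)$. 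Collecting these limits yields \eqref{fe_1} with a constant depending only on the constant in \eqref{fd_0} (and an absolute constant from Young's inequality), hence independent of $\eta_2$ and $\eta_3$, as needed for the successive limits $\eta_2\to0$ and $\eta_3\to0$ in Proposition~\ref{gen_exist}. I expect the genuine difficulty to lie in this admissibility-and-limit step---especially the chain-rule manipulations and the uniform-in-$\delta$ lower bound on the entropy when $\Phi$ is only $W^{1,1}_{\textrm{loc}}$---rather than in the (routine) absorption inequality.
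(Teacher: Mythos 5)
Your proof is correct and follows essentially the same route as the paper's: regularize $\log\Phi$ by a truncation, test the $\eta_2$-regularized equation, discard the nonnegative viscosity term, absorb the noise via Young's inequality using \eqref{fd_0} for $\eta_3$-uniformity, and pass to the limit via monotone convergence (dissipation) and Fatou with Assumption~\ref{ass_log} (entropy). The only cosmetic difference is that you package the two-sided truncation and the smoothing into a single parameter $\delta$ (the paper uses a truncation parameter $M$ and a mollification parameter $\ve$), and the paper makes the distributional identity $\nabla\Phi^{\frac{1}{2}}(\rho)=\frac{1}{2\Phi^{\frac{1}{2}}(\rho)}\nabla\Phi(\rho)$ explicit as a separate step (via the approximation $(\Phi(\rho)+\delta)^{\frac{1}{2}}$) where you invoke it a little more informally through the vanishing of the gradient on $\{\rho=0\}$.
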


\begin{proof}   For every $M\in \N$, let $\log^M\in\C(\R)$ be defined by
\[\log^M(\xi)=\left\{\begin{aligned}
& (-M\vee \log(\xi))\wedge M && \textrm{if}\;\;\xi>0,
\\  & -M && \textrm{if}\;\;\xi<0,
\end{aligned}\right. \]
and for every $M\in\N$ and $\ve\in(0,1)$, for a standard convolution kernel $\kappa^\ve\in \C^\infty(\R)$ of scale $\ve\in(0,1)$, let $\log^{M,\ve}\in\C^\infty(\R)$ be defined by
\[\log^{M,\ve}(\xi)=(\log^M*\kappa^\ve)(\xi).\]
For every $M\in\N$ and $\ve\in(0,1)$, let
\[\Psi^M_\Phi(\xi)=\int_0^\xi\log^M(\Phi(\xi'))\dxip\;\;\textrm{and}\;\;\Psi^{M,\ve}_\Phi(\xi)=\int_0^\xi\log^{M,\ve}(\Phi(\xi'))\dxip.\]
Estimate \eqref{se_0002} implies for every $M\in\N$ and $\ve\in(0,1)$ that $\log^{M,\ve}(\Phi(\rho))$ is an admissible test function for \eqref{se_1}.  It follows from H\"older's inequality, Young's inequality, and the fact that $\log^{M,\ve}$ is nondecreasing that, for every $\d\in(0,1)$, for almost every $t\in[0,T]$,
\[\begin{aligned}
& \left.\int_{\TT^d}\Psi^{M,\ve}_\Phi(\rho(x,s))\dx\right|_{s=0}^{s=t}+\int_0^t\int_{\TT^d}\left(\log^{M,\ve}\right)'(\Phi(\rho))\abs{\nabla\Phi(\rho)}^2\dx\ds
\\ & \leq \frac{1}{2\d}\norm{g}^2_{L^2(\TT^d\times[0,T];\R^d)}+\frac{\d}{2}\int_0^t\int_{\TT^d}\abs{\left(\log^{M,\ve}\right)'(\Phi(\rho))}^2\Phi^{\eta_3}(\rho)\abs{\nabla\Phi(\rho)}^2\dx\dt.
\end{aligned}\]
The dominated convergence theorem and the monotone convergence theorem prove that, after passing to the limits $\ve\rightarrow 0$ and $M\rightarrow\infty$, for every $\d\in(0,1)$, for almost every $t\in[0,T]$,
\begin{equation}\label{fe_2}\begin{aligned}
& \left.\int_{\TT^d}\Psi_\Phi(\rho(x,s))\dx\right|_{s=0}^{s=t}+\int_0^t\int_{\TT^d}\frac{1}{\Phi(\rho)}\abs{\nabla\Phi(\rho)}^2\dx\ds
\\ & \leq \frac{1}{2\d}\norm{g}^2_{L^2(\TT^d\times[0,T];\R^d)}+\frac{\d}{2}\int_0^t\int_{\TT^d}\frac{\Phi^{\eta_3}(\rho)}{\Phi(\rho)^2}\abs{\nabla\Phi(\rho)}^2\dx\ds.
\end{aligned} \end{equation}
We now use \eqref{fd_0} to fix $c_1\in(0,\infty)$ independent of $\eta_3\in(0,1)$ such that $0\leq\Phi^{\eta_3}\leq c_1\Phi$, and choose $\delta_1=\frac{1}{c_1}$ to conclude from \eqref{fe_2} that, for almost every $t\in[0,T]$,
\begin{equation}\label{fe_3}
\int_{\TT^d}\Psi_\Phi(\rho(x,t))\dx+\frac{1}{2}\int_0^t\int_{\TT^d}\frac{1}{\Phi(\rho)}\abs{\nabla\Phi(\rho)}^2\dx\ds \leq \int_{\TT^d}\Psi_\Phi(\rho_0(x))\dx+\frac{c_1}{2}\norm{g}^2_{L^2(\TT^d\times[0,T];\R^d)}.
\end{equation}
Estimate \eqref{fe_1} will follow from \eqref{fe_3} after we prove the distributional inequality
\begin{equation}\label{fe_4}\nabla\Phi^\frac{1}{2}(\rho)=\frac{1}{2\Phi^\frac{1}{2}(\rho)}\nabla\Phi(\rho).\end{equation}
To prove \eqref{fe_4}, let $\psi\in\C^\infty(\TT^d\times[0,T])$ and observe that the estimate \eqref{se_0002} implies the equality, for every $\delta\in(0,1)$,
\[\int_0^T\int_{\TT^d}\left(\Phi(\rho)+\delta\right)^\frac{1}{2}\nabla\psi\dx\dt=-\frac{1}{2}\int_0^T\int_{\TT^d}\left(\Phi(\rho)+\delta\right)^{-\frac{1}{2}}\nabla\Phi(\rho)\psi\dx\dt.\]
Estimate \eqref{fe_3} and the dominated convergence theorem prove that, after taking the limit $\delta\rightarrow 0$,
\[\int_0^T\int_{\TT^d}\Phi^\frac{1}{2}(\rho)\nabla\psi\dx\dt=-\int_0^T\int_{\TT^d}\frac{1}{2\Phi^\frac{1}{2}(\rho)}\nabla\Phi(\rho)\psi\dx\dt,\]
which completes the proof of \eqref{fe_4}, and therefore the proof. \end{proof}

\begin{prop}\label{gen_exist}  Let $T\in(0,\infty)$ and let $\Phi\in\C([0,\infty))\cap\C^1_{\textrm{loc}}((0,\infty))$ satisfy Assumptions~\ref{as_unique} and \ref{as_compact}.  Then, for every $\rho_0\in \Ent_\Phi(\TT^d)$ and $g\in L^2(\TT^d\times[0,T];\R^d)$ there exists a nonnegative weak solution $\rho\in L^\infty([0,T];L^1(\TT^d))$ to the equation
\begin{equation}\label{ge_1} \partial_t\rho = \Delta\Phi(\rho)-\nabla\cdot(\Phi^\frac{1}{2}(\rho)g)\;\; \textrm{in}\;\;\TT^d\times(0,T)\;\;\textrm{with}\;\;\rho(\cdot,0) = \rho_0,\end{equation}
which satisfies, for almost every $t\in[0,T]$,
\begin{equation}\label{ge_02}\norm{\rho(\cdot,t)}_{L^1(\TT^d)}=\norm{\rho_0}_{L^1(\TT^d)},\end{equation}
and which satisfies, for some $c\in(0,\infty)$, for almost every $t\in[0,T]$,
\begin{equation}\label{ge_2}
\int_{\TT^d}\Psi_\Phi(\rho(x,t))\dx+\int_0^t\int_{\TT^d}\abs{\nabla\Phi^\frac{1}{2}(\rho(x,s))}^2\dx\ds \leq c\left(\int_{\TT^d}\Psi_\Phi(\rho_0(x))\dx+\norm{g}^2_{L^2(\TT^d\times[0,T];\R^d)}\right).
\end{equation}
\end{prop}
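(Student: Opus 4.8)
The plan is to realize $\rho$ as a limit of the nondegenerate, viscous approximations constructed in Proposition~\ref{smooth_exist}, carrying out successively the passages $\eta_2\to0$, then $\eta_3\to0$, and finally an approximation of the initial datum by bounded functions. The estimate \eqref{fe_1} of Proposition~\ref{frac_est}, together with Assumptions~\ref{ass_interpolate} and \ref{ass_log}, will furnish bounds that are uniform along these limits, and the compactness statement of Assumption~\ref{as_pre} will supply at every stage the strong convergences needed to pass to the limit in the nonlinear terms. Throughout one should keep the weak formulation written so that all spatial derivatives sit off the low-regularity factors, i.e.\ with the diffusion written as $-2\int\Phi^\frac{1}{2}(\rho)\nabla\Phi^\frac{1}{2}(\rho)\cdot\nabla\psi$ and, in the approximate equations, the viscosity written as $\eta_2\int\rho\,\Delta\psi$.

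First I would reduce to bounded data. For $\rho_0\in\Ent_\Phi(\TT^d)$ set $\rho_0^n=\rho_0\wedge n\in L^\infty(\TT^d)$, so $\rho_0^n\to\rho_0$ in $L^1(\TT^d)$; since $\Psi_\Phi$ is convex with $\Psi_\Phi(0)=0$ and is bounded below by Assumption~\ref{ass_log}, one has $-c\le\Psi_\Phi(\rho_0^n)\le\Psi_\Phi(\rho_0)^+\in L^1(\TT^d)$, and dominated convergence gives $\int_{\TT^d}\Psi_\Phi(\rho_0^n)\,\dx\to\int_{\TT^d}\Psi_\Phi(\rho_0)\,\dx$. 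It therefore suffices to build, for bounded nonnegative data, a weak solution obeying \eqref{ge_02} and \eqref{ge_2} with constants depending on the datum only through $\|\rho_0\|_{L^1(\TT^d)}$, $\int_{\TT^d}\Psi_\Phi(\rho_0)\,\dx$ and $\|g\|_{L^2(\TT^d\times[0,T];\R^d)}$, and then to run the same limit procedure once more along $\rho_0^n\to\rho_0$.

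So fix a bounded nonnegative datum and let $\rho^{\eta_2,\eta_3}$ be the nonnegative solution of \eqref{se_1} from Proposition~\ref{smooth_exist}. By mass conservation \eqref{se_00002}, the entropy–dissipation bound \eqref{fe_1}, and Assumption~\ref{ass_interpolate}, the family is bounded, uniformly in $\eta_2,\eta_3\in(0,1)$, in $L^\infty([0,T];L^1(\TT^d))$ with $\Phi^\frac{1}{2}(\rho^{\eta_2,\eta_3})$ bounded in $L^2([0,T];H^1(\TT^d))$; using $\Phi^\frac{1}{2}(\rho^{\eta_2,\eta_3})\nabla\Phi^\frac{1}{2}(\rho^{\eta_2,\eta_3})\in L^1(\TT^d\times[0,T])$, the bound $\Phi^{\frac{1}{2},\eta_3}\le c\,\Phi^\frac{1}{2}$ from \eqref{fd_0}, the embedding $H^s\hookrightarrow\C^1(\TT^d)$ for $s$ large, and $\|\rho^{\eta_2,\eta_3}\|_{L^\infty([0,T];L^1(\TT^d))}$ for the viscous term, one also gets $\partial_t\rho^{\eta_2,\eta_3}$ bounded in $L^1([0,T];H^{-s}(\TT^d))$. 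Assumption~\ref{as_pre} then yields, along a subsequence as $\eta_2\to0$, strong convergence of $\rho^{\eta_2,\eta_3}$ in $L^1([0,T];L^1(\TT^d))$ (hence a.e.) and of $\Phi^\frac{1}{2}(\rho^{\eta_2,\eta_3})$ in $L^2([0,T];L^2(\TT^d))$ to $\rho^{\eta_3}$ and $\Phi^\frac{1}{2}(\rho^{\eta_3})$, with $\Phi^\frac{1}{2}(\rho^{\eta_2,\eta_3})\rightharpoonup\Phi^\frac{1}{2}(\rho^{\eta_3})$ weakly in $L^2([0,T];H^1(\TT^d))$. In the weak formulation the parabolic term passes by the convergence of a product of a strongly and a weakly convergent sequence in $L^2$, the viscous term $\eta_2\int\rho^{\eta_2,\eta_3}\Delta\psi\to0$ by the $L^\infty([0,T];L^1(\TT^d))$ bound, and $\Phi^{\frac{1}{2},\eta_3}(\rho^{\eta_2,\eta_3})g\to\Phi^{\frac{1}{2},\eta_3}(\rho^{\eta_3})g$ strongly in $L^1$ because $\Phi^{\frac{1}{2},\eta_3}$ is bounded and Lipschitz by \eqref{fd_1}; thus $\rho^{\eta_3}$ is a weak solution of $\partial_t\rho=\Delta\Phi(\rho)-\nabla\cdot(\Phi^{\frac{1}{2},\eta_3}(\rho)g)$ with the given datum, and \eqref{se_00002}, the $L^2([0,T];H^1(\TT^d))$ bound, and \eqref{fe_1} survive by weak lower semicontinuity and Fatou's lemma (using $\Psi_\Phi\ge-c$).

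The passage $\eta_3\to0$ is the step I expect to be the main obstacle, since it removes the regularization of the possibly singular $\Phi^\frac{1}{2}$ from the transport term, where the solution has only the borderline integrability of \eqref{fe_1}; the key is that one needs strong $L^2$ convergence of $\Phi^\frac{1}{2}$ \emph{of} the solutions rather than of the solutions themselves. The uniform bounds above persist with constants independent of $\eta_3$, so Assumption~\ref{as_pre} again gives $\rho^{\eta_3}\to\rho^n$ in $L^1([0,T];L^1(\TT^d))$ and a.e.\ and $\Phi^\frac{1}{2}(\rho^{\eta_3})\to\Phi^\frac{1}{2}(\rho^n)$ strongly in $L^2([0,T];L^2(\TT^d))$; the pointwise domination $\Phi^{\frac{1}{2},\eta_3}(\rho^{\eta_3})\le c\,\Phi^\frac{1}{2}(\rho^{\eta_3})$ from \eqref{fd_0}, together with the local uniform convergence \eqref{fd_2} and the a.e.\ convergence of $\rho^{\eta_3}$, then forces $\Phi^{\frac{1}{2},\eta_3}(\rho^{\eta_3})\to\Phi^\frac{1}{2}(\rho^n)$ strongly in $L^2([0,T];L^2(\TT^d))$ by a generalized dominated convergence theorem, whence $\Phi^{\frac{1}{2},\eta_3}(\rho^{\eta_3})g\to\Phi^\frac{1}{2}(\rho^n)g$ in $L^1$; the parabolic term passes as before, so $\rho^n$ is a weak solution of \eqref{ge_1} with datum $\rho_0^n$ satisfying \eqref{ge_02} and \eqref{ge_2} for $\rho_0^n$. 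Running the compactness argument once more as $n\to\infty$, with all bounds kept uniform by the initial-datum reduction and the limit identified exactly as in the $\eta_3\to0$ step, produces the desired nonnegative weak solution $\rho\in L^\infty([0,T];L^1(\TT^d))$ of \eqref{ge_1}; \eqref{ge_02} follows on testing the limit equation with the constant $1$ and using $\rho\ge0$, and \eqref{ge_2} from weak lower semicontinuity of the $L^2([0,T];H^1(\TT^d))$ norm together with Fatou's lemma. The existence of a renormalized kinetic solution is then immediate from Theorem~\ref{equiv}.
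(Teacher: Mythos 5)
Your proposal is correct and follows the same core strategy as the paper's proof: regularize via Proposition~\ref{smooth_exist}, obtain bounds uniform in $\eta_2,\eta_3$ from mass conservation, the entropy--dissipation estimate \eqref{fe_1}, Assumption~\ref{ass_interpolate}, and the $L^1([0,T];H^{-s})$ bound on $\partial_t\rho^{\eta_2,\eta_3}$, then use Assumption~\ref{as_pre} to pass successively to the limits $\eta_2\to0$ and $\eta_3\to0$, with the transport term handled exactly as in the paper via \eqref{fd_0}, \eqref{fd_2}, and generalized dominated convergence.

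The one genuine difference is the final extension from bounded to general data in $\Ent_\Phi(\TT^d)$. The paper approximates $\rho_0$ by $L^\infty$ data and invokes Theorem~\ref{thm_unique} together with Theorem~\ref{equiv} to conclude that the corresponding solutions form a Cauchy sequence in $L^\infty([0,T];L^1(\TT^d))$ via the $L^1$-contraction; you instead truncate $\rho_0^n=\rho_0\wedge n$, observe that convexity of $\Psi_\Phi$ and Assumption~\ref{ass_log} give $-c\le\Psi_\Phi(\rho_0^n)\le\Psi_\Phi(\rho_0)^+\in L^1(\TT^d)$ so that all a~priori bounds remain uniform in $n$, and run the compactness argument a third time. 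Both routes are valid. Yours has the modest advantage of not invoking the equivalence of weak and renormalized kinetic solutions (Theorem~\ref{equiv} requires Assumption~\ref{as_equiv}, which is not among the hypotheses of the proposition as stated), at the cost of obtaining the limit only along a subsequence and without the quantitative stability in the initial datum that the contraction argument provides. One cosmetic caveat: your uniform entropy bound carries the additive constant $c\abs{\TT^d}$ coming from $\Psi_\Phi(\rho_0)^+\le\Psi_\Phi(\rho_0)+c$, so the constant in \eqref{ge_2} should be understood to absorb an additive term as well; this is consistent with how the paper itself states its intermediate bounds.
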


\begin{proof}  We will first prove the existence of a weak solution for initial data $\rho_0\in L^\infty(\TT^d)$.  For every $\eta_2,\eta_3\in(0,1)$ let $\rho^{\eta_2,\eta_3}\in L^2([0,T];H^1(\TT^d))$ denote a nonnegative solution of \eqref{se_1} constructed in Proposition~\ref{smooth_exist} with initial data $\rho_0$ and control $g$.  It follows from \eqref{se_00002} that
\begin{equation}\label{ge_3}\norm{\rho^{\eta_2,\eta_3}}_{L^\infty([0,T];L^1(\TT^d))}=\norm{\rho_0}_{L^1(\TT^d)}.\end{equation}
It follows from Assumption~\ref{as_compact}, Lemma~\ref{as_interpolate}, Proposition~\ref{frac_est}, and \eqref{ge_3} that there exists $c\in(0,\infty)$ such that, for every $\eta_2,\eta_3\in(0,1)$,
\begin{equation}\label{ge_4}\begin{aligned}
\norm{\Phi^\frac{1}{2}(\rho^{\eta_2,\eta_3})}^2_{L^2([0,T];H^1(\TT^d))} & \leq c\left(\norm{\nabla\Phi^\frac{1}{2}(\rho^{\eta_2,\eta_3})}^2_{L^2([0,T];L^2(\TT^d;\R^d))}+\norm{\rho_0}^m_{L^1(\TT^d)}\right)
\\ & \leq c_3\left(1+\int_{\TT^d}\Psi_\Phi(\rho_0)\dx+\norm{g}^2_{L^2(\TT^d\times[0,T];\R^d)}+\norm{\rho_0}^m_{L^1(\TT^d)}\right),
\end{aligned}\end{equation}
where Assumption~\ref{as_compact}, the definition of $\Psi_\Phi$, and $\rho_0\in L^\infty(\TT^d)$ imply that the righthand side of \eqref{ge_4} is finite.  Estimate \eqref{se_3} proves that there exists $c\in(0,\infty)$ independent of $\eta_2,\eta_3\in(0,1)$ such that
\begin{equation}\label{ge_6}\begin{aligned}
& \eta_2^\frac{1}{2}\norm{\rho^{\eta_2,\eta_3}}_{L^\infty([0,T];L^2(\TT^d))}+\eta_2\norm{\nabla \rho^{\eta_2,\eta_3}}_{L^2([0,T];L^2(\TT^d;\R^d))}
\\ & \leq c\left(\eta_2^\frac{1}{2}\norm{\rho_0}_{L^2(\TT^d)}+\norm{g}_{L^2(\TT^d\times[0,T])}\norm{\Phi^{\frac{1}{2},\eta_3}}_{L^\infty([0,\infty))}\right).
\end{aligned}\end{equation}
It follows from estimate \eqref{fd_0}, equation \eqref{se_1}, H\"older's inequality and the Sobolev embedding theorem that there exists $c\in(0,\infty)$ such that, for every $\eta_2,\eta_3\in(0,1)$, for every $\psi\in\C^\infty(\TT^d\times[0,T])$, omitting the domains of the respective Sobolev spaces,
\[\begin{aligned}
& \abs{\int_{\TT^d}\partial_t\rho^{\eta_2,\eta_3}(x,t)\psi(x)\dx\dt}
\\ & \leq \left(2\norm{\Phi^\frac{1}{2}(\rho^{\eta_2,\eta_3})\nabla\Phi^\frac{1}{2}(\rho^{\eta_2,\eta_3})}_{L^1}+\eta_2\norm{\nabla \rho^{\eta_2,\eta_3}}_{L^1}+\norm{g\Phi^{\frac{1}{2},\eta_3}(\rho^{\eta_2,\eta_3})}_{L^1}\right)\norm{\nabla\psi}_{L^\infty}
\\ & \leq c\left(2\norm{\Phi^\frac{1}{2}(\rho^{\eta_2,\eta_3})\nabla\Phi^\frac{1}{2}(\rho^{\eta_2,\eta_3})}_{L^1}+\eta_2\norm{\nabla \rho^{\eta_2,\eta_3}}_{L^2}+\norm{g\Phi^\frac{1}{2}(\rho^{\eta_2,\eta_3})}_{L^1}\right)\norm{\nabla\psi}_{H^{(\nicefrac{d}{2}+2)}}.
\end{aligned}\]
Therefore, H\"older's inequality and Young's inequality prove that, for $c\in(0,\infty)$ independent of $\eta_2,\eta_3\in(0,1)$,
\begin{equation}\label{ge_8}\begin{aligned}
& \norm{\partial_t\rho^{\eta_2,\eta_3}}_{L^1([0,T];H^{-(\nicefrac{d}{2}+2)}(\TT^d))}
\\ & \leq c\left(\norm{\Phi^\frac{1}{2}(\rho^{\eta_2,\eta_3})}^2_{L^2([0,T];H^1(\TT^d))}+\eta_2\norm{\nabla \rho^{\eta_2,\eta_3}}_{L^2([0,T];L^2(\TT^d))}+\norm{g}^2_{L^2([0,T];L^2(\TT^d))}\right).
\end{aligned}\end{equation}
Estimate \eqref{ge_4} proves that for every $\eta_3\in(0,1)$ the righthand side of \eqref{ge_8} is uniformly bounded in $\eta_2\in(0,1)$.

In combination \eqref{ge_3}, \eqref{ge_4}, \eqref{ge_6}, \eqref{ge_8}, and Lemma~\ref{as_pre} prove that for every $\eta_3\in(0,1)$ there exists $\rho^{\eta_3}\in L^1([0,T];L^1(\TT^d))$ such that, after passing to a subsequence $\{\eta^k_2\rightarrow 0\}_{k\in\N}$, as $k\rightarrow\infty$,
\begin{equation}\label{ge_9}\rho^{\eta^k_2,\eta_3}\rightarrow \rho^{\eta_3}\;\;\textrm{almost everywhere and strongly in}\;\;L^1([0,T];L^1(\TT^d)).\end{equation}
It then follows from \eqref{ge_4} and \eqref{ge_9} that, as $k\rightarrow\infty$,
\begin{equation}\label{ge_10} \Phi^\frac{1}{2}(\rho^{\eta^k_2,\eta_3})\rightharpoonup\Phi^\frac{1}{2}(\rho^{\eta_3})\;\;\textrm{weakly in}\;\;L^2([0,T];H^1(\TT^d)),\end{equation}
and it follows from Lemma~\ref{as_pre}, \eqref{fd_0}, \eqref{fd_2}, \eqref{ge_4}, \eqref{ge_8}, and \eqref{ge_9} and the dominated convergence theorem that
\begin{equation}\label{ge_11} \lim_{k\rightarrow\infty}\Phi^{\frac{1}{2},\eta_k}(\rho^{\eta_k})=\Phi^\frac{1}{2}(\rho)\;\;\textrm{strongly in}\;\;L^2([0,T];L^2(\TT^d)).\end{equation}
Since it follows from \eqref{ge_6} that, for every $\eta_3\in(0,1)$ and $\psi\in\C^\infty(\TT^d\times[0,T])$,
\[\limsup_{\eta_2 \rightarrow \infty}\abs{\int_0^T\int_{\TT^d}\eta_2\nabla \rho^{\eta_2,\eta_3}\cdot \nabla\psi\dx\dt}=0,\]
estimates \eqref{ge_9}, \eqref{ge_10}, and \eqref{ge_11} prove that $\rho^{\eta_3}$ is a nonnegative weak solution of the equation
\begin{equation}\label{ge_012} \partial_t\rho^{\eta_3} = \Delta\Phi(\rho^{\eta_3})-\nabla\cdot\left(\Phi^{\frac{1}{2},\eta_3}(\rho^{\eta_3})g\right)\;\;\textrm{in}\;\;\TT^d\times(0,T)\;\;\textrm{with}\;\;\rho^{\eta_3}(\cdot,0)= \rho_0.\end{equation}
It follows from \eqref{ge_10}, \eqref{ge_11}, and the weak lower-semicontinuity of the Sobolev norm that there exists $c_1,c_2\in(0,\infty)$ such that, for every $\eta_3\in(0,1)$,
\[
\norm{\Phi^\frac{1}{2}(\rho^{\eta_3})}^2_{L^2([0,T];H^1(\TT^d))} \leq c_1\left(1+\int_{\TT^d}\Psi_\Phi(\rho_0)\dx+\norm{g}^2_{L^2(\TT^d\times[0,T];\R^d)}+\norm{\rho_0}^{c_2}_{L^1(\TT^d)}\right),
\]
and that there exists $c\in(0,\infty)$ such that, for every $\eta_3\in(0,1)$,
\begin{equation}\label{ge_21}
\norm{\partial_t\rho^{\eta_3}}_{L^1([0,T];H^{-(\nicefrac{d}{2}+2)}(\TT^d))} \leq c\left(\norm{\Phi^\frac{1}{2}(\rho^{\eta_2,\eta_3})}^2_{L^2([0,T];H^1(\TT^d))}+\norm{g}^2_{L^2([0,T];L^2(\TT^d))}\right).
\end{equation}
And since \eqref{ge_3} and \eqref{ge_9} prove that, for every $\eta_3\in(0,1)$,
\[\norm{\rho^{\eta_3}}_{L^\infty([0,T];L^1(\TT^d))}=\norm{\rho_0}_{L^1(\TT^d)},\]
a repetition of the arguments leading from \eqref{ge_9} to \eqref{ge_012} prove that there exists a solution $\rho\in L^\infty([0,T];L^1(\TT^d))$ to \eqref{ge_1} which satisfies \eqref{ge_02} and \eqref{ge_2}.  This completes the proof for initial data $\rho_0\in L^\infty(\TT^d)$.

To complete the proof for general initial data, let $\rho_0\in \Ent_\Phi(\TT^d)$ and let $\{\rho_0^n\}_{n\in\N}\subseteq L^\infty(\TT^d)$ be a sequence with uniformly bounded entropy such that, as $n\rightarrow\infty$, $\rho_0^n\rightarrow \rho_0$ strongly in $L^1(\TT^d)$.  The existence of such a sequence follows from \cite[Proposition~7.12]{FehGes19}.  Indeed, it suffices to choose $\rho^n_0=\rho_\wedge n$.  Then, for every $n\in\N$ let $\rho^n\in L^\infty([0,T];L^1(\TT^d))$ denote a solution of \eqref{ge_1} that satisfies \eqref{ge_02} and \eqref{ge_2}.  By Theorem~\ref{thm_unique} and Theorem~\ref{equiv}, the solutions $\rho^n$ are unique and form a Cauchy sequence in $L^\infty([0,T];L^1(\TT^d))$.  Therefore, there exists $\rho\in L^\infty([0,T];L^1(\TT^d))$ such that, as $n\rightarrow\infty$, $\rho^n\rightarrow\rho$ strongly in $L^\infty([0,T];L^1(\TT^d))$.  A repetition of the arguments leading from \eqref{ge_9} to \eqref{ge_11} prove that $\rho$ is a weak solution of \eqref{ge_1} with initial data $\rho_0$ that satisfies \eqref{ge_02} and \eqref{ge_2}.  This completes the proof.  \end{proof}

\subsection{Weak-strong continuity}\label{subsection_weak_strong}  We conclude this section with the proof of weak-strong continuity.  Precisely, we will prove that a weakly convergent sequence of controls induces a strongly convergent sequence of solutions.

\begin{prop}\label{weak_strong}  Let $T\in(0,\infty)$, let $\Phi\in\C([0,\infty))\cap\C^1_{\textrm{loc}}((0,\infty))$ satisfy Assumptions~\ref{as_unique}, ~\ref{as_equiv}, \ref{as_compact}, and let $\rho\in \Ent_\Phi(\TT^d)$.  Assume that $\{g_n\}_{n\in\N}\subseteq L^2(\TT^d\times[0,T];\R^d)$ and $g\in L^2(\TT^d\times[0,T];\R^d)$ satisfy, as $n\rightarrow\infty$,
\[g_n\rightharpoonup g\;\;\textrm{weakly in}\;\;L^2([0,T]\times \TT^d;\R^d).\]
Then, the renormalized kinetic solutions $\{\rho_n\}_{n\in\N}$ and $\rho$ with controls $\{g_n\}_{n\in\N}$ and $g$, and with initial data $\rho_0$ satisfy that, as $n\rightarrow\infty$,
\[\rho_n\rightarrow \rho\;\;\textrm{strongly in}\;\;L^1([0,T];L^1(\TT^d)).\]
\end{prop}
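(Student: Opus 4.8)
The plan is to combine the uniform a priori estimates of Proposition~\ref{gen_exist}, the abstract compactness Assumption~\ref{as_pre}, and the equivalence and uniqueness results of Theorems~\ref{equiv} and~\ref{thm_unique}. The conceptual point — and precisely the reason the equivalence with weak solutions developed in Section~\ref{sec_equiv} is needed here — is that in the weak formulation~\eqref{equiv_2} the control $g_n$ enters only through the product $g_n\Phi^{\frac12}(\rho_n)$, a weakly convergent sequence multiplied by a strongly convergent one, whereas in the kinetic formulation~\eqref{skel_sol_def_1} one would be confronted with the product of a weakly convergent control and a weakly convergent gradient, which need not pass to the limit.

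\textbf{Uniform bounds and compactness.} Since $g_n\rightharpoonup g$ weakly in $L^2$, we have $\sup_n\norm{g_n}_{L^2(\TT^d\times[0,T];\R^d)}<\infty$. By~\eqref{ge_2}, Assumption~\ref{ass_log} (which bounds $\int_{\TT^d}\Psi_\Phi(\rho_n(\cdot,t))$ from below, so that~\eqref{ge_2} is an effective upper bound), Assumption~\ref{ass_interpolate}, and the mass conservation~\eqref{ge_02}, one obtains
\[
\sup_{n\in\N}\left(\norm{\rho_n}_{L^\infty([0,T];L^1(\TT^d))}+\norm{\Phi^{\frac12}(\rho_n)}_{L^2([0,T];H^1(\TT^d))}\right)<\infty.
\]
Next, writing the equation in the form $\partial_t\rho_n=-\nabla\cdot\big(2\Phi^{\frac12}(\rho_n)\nabla\Phi^{\frac12}(\rho_n)\big)+\nabla\cdot\big(g_n\Phi^{\frac12}(\rho_n)\big)$ as in~\eqref{equiv_2}, H\"older's inequality and the bound above show that both vector fields are bounded in $L^1(\TT^d\times[0,T];\R^d)$ uniformly in $n$, whence, exactly as in the derivation of~\eqref{ge_8}, $\sup_n\norm{\partial_t\rho_n}_{L^1([0,T];H^{-s}(\TT^d))}<\infty$ for $s=\nicefrac d2+2\geq\nicefrac d2+1$. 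Assumption~\ref{as_pre} now applies: after passing to a subsequence, $\rho_n\to\bar\rho$ strongly in and almost everywhere on $L^1([0,T];L^1(\TT^d))$, and $\Phi^{\frac12}(\rho_n)\to\Phi^{\frac12}(\bar\rho)$ strongly in $L^2([0,T];L^2(\TT^d))$, the a.e.\ convergence identifying the limit. The uniform $H^1$-bound gives, along a further subsequence, $\Phi^{\frac12}(\rho_n)\rightharpoonup\Phi^{\frac12}(\bar\rho)$ weakly in $L^2([0,T];H^1(\TT^d))$, hence $\nabla\Phi^{\frac12}(\rho_n)\rightharpoonup\nabla\Phi^{\frac12}(\bar\rho)$ weakly in $L^2$; in particular $\Phi^{\frac12}(\bar\rho)\in L^2([0,T];H^1(\TT^d))$, so~\eqref{equiv_1} holds, and nonnegativity and~\eqref{ge_02} pass to the limit, giving $\bar\rho\in L^\infty([0,T];L^1(\TT^d))$ nonnegative.

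\textbf{Identification of the limit and conclusion.} Fix $\psi\in\C^\infty(\TT^d)$, multiply~\eqref{equiv_2} (valid for a.e.\ $t$) by $\varphi\in\C^\infty_c((0,T))$, and integrate in $t$; pass to the limit term by term. The left-hand side and the $\rho_0$-term are controlled by the strong $L^1([0,T];L^1(\TT^d))$ convergence of $\rho_n$; the parabolic term is the pairing (against $\nabla\psi\,\varphi\in L^\infty$) of the strongly $L^2$-convergent $\Phi^{\frac12}(\rho_n)$ with the weakly $L^2$-convergent $\nabla\Phi^{\frac12}(\rho_n)$; and the control term is the pairing of the weakly $L^2$-convergent $g_n$ with the strongly $L^2$-convergent $\Phi^{\frac12}(\rho_n)$ — in both cases a weak-times-strong product, which converges to the expected limit. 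Recovering the pointwise-in-$t$ identity from the $\varphi$-tested one by means of the $\C([0,T];H^{-s}(\TT^d))$ regularity implied by the uniform $\partial_t$-bound, we conclude that $\bar\rho$ is a weak solution of~\eqref{skel_eq} with control $g$ and initial datum $\rho_0$ in the sense of Definition~\ref{classical_weak}. By Theorem~\ref{equiv}, $\bar\rho$ is then a renormalized kinetic solution with control $g$ and initial datum $\rho_0$, and by the uniqueness statement of Theorem~\ref{thm_unique}, $\bar\rho=\rho$. Since every subsequence of $\{\rho_n\}$ admits a further subsequence converging strongly in $L^1([0,T];L^1(\TT^d))$ to this same limit $\rho$, the full sequence converges, $\rho_n\to\rho$ strongly in $L^1([0,T];L^1(\TT^d))$.

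\textbf{Main obstacle.} The delicate step is the identification of the limit: one must arrange that every nonlinear product occurring in the passage to the limit is of weak-times-strong type — which is exactly what the weak (rather than kinetic) formulation affords — and handle cleanly the $n$-dependent Lebesgue-null exceptional time-sets appearing in~\eqref{equiv_2}, by testing in time and then using the time-regularity. Everything else is a routine assembly of the a priori bounds with the abstract compactness Assumption~\ref{as_pre} and the uniqueness already established.
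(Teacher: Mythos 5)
Your proposal is correct and follows essentially the same route as the paper's (much terser) proof: uniform bounds from \eqref{ge_02}, \eqref{ge_2}, and the analogue of \eqref{ge_8}/\eqref{ge_21}, compactness via Assumption~\ref{as_pre}, identification of the limit in the weak formulation of Definition~\ref{classical_weak} using the weak-times-strong structure of the products, and then Theorems~\ref{equiv} and~\ref{thm_unique} to conclude $\bar\rho=\rho$ and upgrade to the full sequence. The paper compresses all of this into "a repetition of the argument leading to the proof of Proposition~\ref{gen_exist}"; your write-up supplies the details, including the correct observation that the equivalence with weak solutions is precisely what makes the passage to the limit possible.
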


\begin{proof} For every $n\in\N$ let $\rho_n\in L^\infty([0,T];L^1(\TT^d))$ denote the weak solution in the sense of Definition~\ref{classical_weak} constructed in Proposition~\ref{gen_exist} with control $g_n$ and initial data $\rho_0$, and let $\rho\in L^\infty([0,T];L^1(\TT^d))$ denote the weak solution with control $g$ and initial data $\rho_0$.  Theorem~\ref{equiv} proves that the $\{\rho_n\}_{n\in\N}$ and $\rho$ are renormalized kinetic solutions in the sense of Definition~\ref{skel_sol_def}, and Theorem~\ref{thm_unique} proves that they are the unique renormalized kinetic solutions.  A repetition of the argument leading to the proof of Proposition~\ref{gen_exist}, based on estimates \eqref{ge_02}, \eqref{ge_2}, and \eqref{ge_21} proves that, as $n\rightarrow\infty$, $\rho_n\rightarrow\rho$ strongly in $L^1([0,T];L^1(\TT^d))$.
This completes the proof. \end{proof}

\section{The uniform large deviations principle for conservative SPDE}\label{sec_uniform_ldp}  In this section, we will establish a uniform large deviations principle for the solutions of the equation
\begin{equation}\label{sldp_1}\partial_t\rho^\ve = \Delta\Phi(\rho^\ve)-\sqrt{\ve}\nabla\cdot(\Phi^\frac{1}{2}(\rho^\ve)\circ\xi^{K(\ve)})\;\;\textrm{in}\;\;\TT^d\times(0,T),\end{equation}
with respect to weakly compact subsets of $L^1(\TT^d)$ with bounded entropy, for suitable spatially correlated noise $\xi^{K(\ve)}$ that converges to space-time white noise $\xi$ as $\ve\rightarrow 0$.  This section is separated into two subsections.  Section~\ref{subsection_noise} defines the noise $\xi^{K(\ve)}$ and the well-posedness theory for \eqref{sldp_1}, and Section~\ref{subsection_ldp} proves the uniform large deviations principle.

\subsection{The noise and well-posedness of \eqref{sldp_1}}\label{subsection_noise}  We will focus on spectral approximations of space-time white noise.  However, we emphasize that our methods are general, and apply without any essential changes to noise of a general type.  See Remark~\ref{smooth_white_noise} below.

Let $\{B^k,W^k\}_{k\in\N_0}$ be independent $d$-dimensional Brownian motions defined on some probability space $(\O,\F,\P)$ with respect to the canonical filtration $(\F_t)_{t\in[0,\infty)}$.  We consider the Brownian motions $\{B^k,W^k\}_{k\in\N_0}$ to be taking values in the space
\[\R^\infty=\{(x_i)_{i\in\N}\colon x_i\in \R^d,\;\forall\;i\in\N\},\]
equipped with the metric topology of coordinate-wise convergence.  We then define for every $K\in\N$ the spectral approximation of the noise $\xi^K$, for the sum over elements $\abs{k}\leq K$ in $\Z^d$,
\begin{equation}\label{final_noise}\dd\xi^K = \sum_{\abs{k}\leq K}\left(\sin(k\cdot x)\dd B^k_t + \cos(k\cdot x)\dd W^k_t\right).\end{equation}
For this noise, we will understand the Stratonovich SPDE
\[\partial_t\rho =\Delta\Phi(\rho)-\sqrt{\ve}\nabla\cdot\left(\Phi^\frac{1}{2}(\rho)\circ \xi^K\right),\]
in the It\^o formulation, for $N_K = \#\{k\in\Z^d\colon \abs{k}\leq K\}$,
\begin{equation}\label{entropy_eq}
\partial_t\rho =\Delta\Phi(\rho)-\sqrt{\ve}\nabla\cdot\left(\Phi^\frac{1}{2}(\rho)\xi^K\right)+\frac{\ve N_K}{2}\nabla\cdot\left(\left(\Phi^\frac{1}{2}\right)'(\rho)\nabla \Phi^\frac{1}{2}(\rho)\right).
\end{equation}
To prove the large deviations result, we will also consider the controlled equation
\begin{equation}\label{entropy_con_eq}
\partial_t\rho^{\ve,K} =\Delta\Phi(\rho^{\ve,K})-\sqrt{\ve}\nabla\cdot\left(\Phi^\frac{1}{2}(\rho^{\ve,K})\circ \xi^K\right)-\nabla\cdot(\Phi^\frac{1}{2}(\rho^{\ve,K})P_Kg),
\end{equation}
where $P_Kg$ denotes the Fourier projection of a random control $g\in L^2(\O\times[0,T];L^2(\TT^d;\R^d))$ onto the span of $\{\sin(k\cdot x),\cos(k\cdot x)\}_{\{\abs{k}\leq K\}}$.  The well-posedness of \eqref{entropy_eq} and \eqref{entropy_con_eq} is a consequence of \cite{FehGes21} and the Girsanov theorem, which is summarized in Theorem~\ref{thm_entropy} and Proposition~\ref{prop_prob_con_strong} below.  Finally, in Proposition~\ref{weak_est} we recall a priori estimates for the solutions, and in Proposition~\ref{law_tight} we prove the tightness of the laws of the solutions in a certain scaling regime.

\begin{thm}\label{thm_entropy}  Let $\Phi\in\C([0,\infty))\cap\C^1_{\textrm{loc}}((0,\infty))$ satisfy Assumptions~\ref{as_unique},  \ref{as_equiv}, and \ref{as_compact} and let $\rho_0\in\Ent_\Phi(\TT^d)$.  Then, there exists a unique stochastic kinetic solution $\rho^{\ve,K}\in L^\infty(\Omega\times[0,T];L^1(\TT^d))$ of \eqref{entropy_eq} with initial condition $\rho_0$ in the sense of \cite[Definition~3.4]{FehGes21}.  If $\rho_1,\rho_2$ are two solutions of \eqref{entropy_eq} with initial data $\rho_0^1,\rho_0^2\in \Ent_\Phi(\TT^d)$, then almost surely
\begin{equation}\label{thm_entropy_contraction}\sup_{t\in[0,T]}\norm{\rho^1(\cdot,t)-\rho^2(\cdot,t)}_{L^1(\TT^d)}\leq\norm{\rho_0^1-\rho_0^2}_{L^1(\TT^d)}.\end{equation}
Furthermore, there exists a measurable map
\[S_{\ve,K}\colon \Ent_\Phi(\TT^d)\times \C([0,T];\R^\infty)\rightarrow L^\infty([0,T];L^1(\TT^d))\]
such that, for every $\rho_0\in\Ent_\Phi(\TT^d)$, $\P$-a.s.,
\[S_{\ve,K}(\rho_0,(B^k,W^k)_{k\in\N_0})=\rho^{\ve,K},\]
where $\rho^{\ve,K}$ is the unique entropy solution of \eqref{entropy_eq} with initial data $\rho_0$.
\end{thm}

\begin{proof}  Fix the Brownian motions $\overline{B}=(B^k,W^k)_{k\in\N_0}\in\C([0,\infty);\R^\infty)$ on a probability space $(\O,\F,\P)$ with respect to the filtration $(\mathcal{F}_t)_{t\in[0,\infty)}$ and fix $K\in\N$ and $\ve\in(0,1)$.  The existence of an $\F_t$-measurable solution and the almost sure $L^1$-contraction estimate \eqref{thm_entropy_contraction} is shown in \cite[Theorem~4.7, Theorem~5.29, Corollary~5.31]{FehGes21}.  It follows from the measurability that, for every $\rho\in \Ent_\Phi(\TT^d)$, there exists a measurable function $S^{\ve,K}_\rho\colon\C([0,\infty);\R^\infty)\rightarrow L^1([0,T];L^1(\TT^d))$ such that, $\P$-a.s.,
\begin{equation}\label{tec_0}S^{\ve,K}_\rho(\overline{B})=\rho^{\ve,K}.\end{equation}
Since it follows from Proposition~\ref{bounded_entropy} below that $\Ent_\Phi(\TT^d)$ equipped with strong $L^1(\TT^d)$-topology is separable, let $\{\rho_n\}_{n\in\N}$ be a countable dense subset of $\Ent_\Phi(\TT^d)$.  It follows from \eqref{thm_entropy_contraction} and the countability of the set $\N\times\N$ that, on a measurable 
subset of full probability,
\begin{equation}\label{tec_1}\sup_{t\in[0,T]}\norm{S^{\ve,K}_{\rho_n}(\overline{B})-S^{\ve,K}_{\rho_m}(\overline{B})}_{L^1(\TT^d)}\leq \norm{\rho_n-\rho_m}_{L^1(\TT^d)}\;\;\textrm{for every}\;\;n,m\in\N.\end{equation}
It follows from the density of the $\{\rho_n\}_{n\in\N}$ with respect to the strong $L^1(\TT^d)$-norm and \eqref{tec_1} that there almost surely exists a strongly continuous function $\overline{S}^{\ve,K}_{\overline{B}}\colon\Ent_\Phi(\TT^d)\rightarrow L^1([0,T];L^1(\TT^d))$ such that, for every $n\in\N$, on a subset of full probability independent of $n$,
\begin{equation}\label{tec_02} \overline{S}^{\ve,K}_{\overline{B}}(\rho_n)=S^{\ve,K}_{\rho_n}(\overline{B}).\end{equation}
It then follows from \eqref{tec_1}, \eqref{tec_02}, and a simplified version of \cite[Theorem~5.29, Corollary~5.31]{FehGes21} that, for every $\rho\in\Ent_\Phi(\TT^d)$, on a subset of full probability depending on $\rho$,
\begin{equation}\label{tec_2} \overline{S}^{\ve,K}_{\overline{B}}(\rho)=S^{\ve,K}_{\rho}(\overline{B}).\end{equation}
We define $S^{\ve,K}\colon\Ent_\Phi(\TT^d)\times \C([0,T];\R^\infty)\rightarrow L^\infty([0,T];L^1(\TT^d))$ by the rule
\[S^{\ve,K}(\rho,\overline{B})=\overline{S}^{\ve,K}_{\overline{B}}(\rho),\]
from which it follows from \eqref{tec_0} that, for every $\rho\in\Ent_\Phi(\TT^d)$,
\begin{equation}\label{tec_3} S^{\ve,K}(\rho,\cdot)\colon\C([0,\infty);\R^\infty)\rightarrow L^1([0,T];L^1(\TT^d))\;\;\textrm{is measurable,} \end{equation}
and from \eqref{tec_2} that, for almost every realization of $\overline{B}$,
\begin{equation}\label{tec_4} S^{\ve,K}(\cdot,\overline{B})\colon\Ent_\Phi(\TT^d)\rightarrow L^1([0,T];L^1(\TT^d))\;\;\textrm{is strongly continuous.}\end{equation}
In combination \eqref{tec_3}, \eqref{tec_4}, and the separability of $\Ent_\Phi(\TT^d)$ prove that $S^{\ve,K}$ is measurable, which with \eqref{tec_0} and \eqref{tec_2} completes the proof.  \end{proof}

\begin{prop}\label{prop_prob_con_strong}  Let $T\in(0,\infty)$ and let $\Phi\in\C([0,\infty))\cap\C^1_{\textrm{loc}}((0,\infty))$ satisfy Assumptions~\ref{as_unique},  \ref{as_equiv}, and \ref{as_compact}.  For every $\ve\in(0,1)$ and $K\in\N$ let
\[S_{\ve,K}\colon \Ent_\Phi(\TT^d)\times \C([0,T];\R^\infty)\rightarrow L^\infty([0,T];L^1(\TT^d))\]
be defined in Theorem~\ref{thm_entropy}.  Then, for every predictable process $g\in L^2(\O\times[0,T];L^2(\TT^d;\R^d))$ and $\rho_0\in\Ent_\Phi(\TT^d)$, for $g_k(\o,s)=\langle g(\o,s),\cos(k\cdot x)\rangle_{L^2(\TT^d)}$ and $g'_k(\o,s)=\langle g(\o,s),\sin(k\cdot x)\rangle_{L^2(\TT^d)}$,
\[\rho^{\ve,K}=S_{\ve,K}\left(\rho_0,(B^k_\cdot,W^k_\cdot)_{k\in\N_0}+\ve^{-\frac{1}{2}}\left(\int_0^\cdot g_k(\o,s)\ds, \int_0^\cdot g'_k(\o,s)\ds\right)_{k\in\N_0}\right),\]
is the unique solution of \eqref{entropy_con_eq} in the sense of \cite[Definition~3.4]{FehGes21}.\end{prop}
\begin{proof}  The proof is a consequence of Theorem~\ref{thm_entropy} and the Girsanov theorem (see, for example, \cite[Theorem~10]{BudDupMar2008}).\end{proof}

\begin{prop}\label{weak_est}  Let $\Phi\in\C([0,\infty))\cap\C^1_{\textrm{loc}}((0,\infty))$ satisfy Assumptions~\ref{as_unique},  \ref{as_equiv}, \ref{as_compact}, let $\Psi_\Phi$ be as in \eqref{psi_phi}, let $g\in L^2(\O\times[0,T];L^2(\TT^d;\R^d))$ be a predictable process, and let $\rho_0\in\Ent_\Phi(\TT^d)$.  Assume that $\ve\in(0,1)$ and $K\in\N$ satisfy $\ve K^{d+2}\leq 1$, and let $\rho^{\ve,K}\in L^\infty(\Omega\times[0,T];L^1(\TT^d))$ be the unique solution of \eqref{entropy_con_eq} in the sense of Proposition~\ref{prop_prob_con_strong}.  Then there exist $c\in(0,\infty)$ such that
\begin{align*}
& \E\left[\sup_{t\in[0,T]}\int_{\TT^d}\Psi_\Phi(\rho^{\ve,K}(x,t))\dx\right]+\E\left[\int_0^T\int_{\TT^d}\abs{\nabla\Phi^\frac{1}{2}(\rho^{\ve,K})}^2\dx\ds\right]
\\ & \leq c\left(1+T\E\left[\norm{\rho_0}_{L^1(\TT^d)}^m\right]+\E\left[\int_{\TT^d}\Psi_\Phi(\rho_0(x))\dx\right]+\E\left[\int_0^T\int_{\TT^d}\abs{P_Kg}^2\dx\ds\right]\right).
\end{align*}

\end{prop}

\begin{proof}  The proof is a consequence of \cite[Proposition~5.18, Corollary~5.31]{FehGes21}. \end{proof}

\begin{prop}\label{law_tight}  Let $\Phi\in\C([0,\infty))\cap\C^1_{\textrm{loc}}((0,\infty))$ satisfy Assumptions~\ref{as_unique},  \ref{as_equiv}, \ref{as_compact}, let $g\in L^2(\O\times[0,T];L^2(\TT^d;\R^d))$ be a predictable process, and let $\rho_0\in L^2(\TT^d)$.  Let $\{\ve,K(\ve)\}_{\ve\in(0,1)}$ be a sequence that satisfies
\[\ve K(\ve)^{d+2}\rightarrow 0\;\;\textrm{and}\;\;K(\ve)\rightarrow \infty\;\;\textrm{as}\;\;\ve\rightarrow 0.\]
Then the laws of the solutions $\{\rho^{\ve,K(\ve)}\}_{\ve\in(0,1)}$ of \eqref{entropy_con_eq} with initial data $\rho_0$ and control $g$ are tight on $L^1([0,T];L^1(\TT^d))$.
\end{prop}

\begin{proof}  The proof is a consequence of Lemma~\ref{as_pre}, Proposition~\ref{weak_est}, and \cite[Proposition~5.26]{FehGes21}. \end{proof}

\subsection{The uniform large deviations principle}\label{subsection_ldp}  We will now use the well-posedness results of Theorem~\ref{thm_entropy} and Proposition~\ref{prop_prob_con_strong}, the estimates of Proposition~\ref{weak_est}, and the convergence of Theorem~\ref{weak_thm_con} to establish a large deviations principle for the solutions of the equation
\begin{equation}\label{collapse_eq}
\partial_t\rho^\ve =\Delta\Phi(\rho^\ve)-\sqrt{\ve}\nabla\cdot\left(\Phi^\frac{1}{2}(\rho^\ve)\circ\xi^K\right)\;\;\textrm{with}\;\;\rho^\ve(\cdot,0)=\rho_0,
\end{equation}
about the solution to the limiting equation
\[ \partial_t\rho =\Delta\Phi(\rho)\;\;\textrm{with}\;\;\rho(\cdot,0)=\rho_0,\]
along the scaling regime identified in Proposition~\ref{law_tight}.  The large deviations principle is a consequence of the weak approach to large deviations established in \cite{BudDupMar2008}, as well as Dupuis and Ellis \cite{DupEll1997} and Budhiraja and Dupuis \cite{BudDup2019}.  Precisely, we will show that the solutions satisfy a large deviations principle with rate function $I_{\rho_0}\colon L^1([0,T];L^1(\TT^d))\rightarrow[0,\infty]$ defined by
\begin{equation}\label{collapse_ldp_rate}I_{\rho_0}(\rho)=\frac{1}{2}\inf\left\{\norm{g}^2_{L^2([0,T];L^2(\TT^d))}\colon \partial_t\rho=\Delta\Phi(\rho)-\nabla\cdot \left(\Phi^\frac{1}{2}(\rho)g\right)\;\;\textrm{and}\;\;\rho(\cdot,0)=\rho_0\right\}.\end{equation}
The solutions appearing in the definition of the rate function are understood in the sense of Definition~\ref{classical_weak}.  The methods of \cite[Theorem~6]{BudDupMar2008} and Budhiraja, Dupuis, and Salins \cite[Theorem~4.3]{BudDupSal} in fact prove the stronger statement that the solutions of \eqref{collapse_eq} satisfy a large deviations principle uniformly with respect to subsets of the initial data with bounded entropy (see \eqref{def_ent_r} and Definition~\ref{def_uniform_Laplace} below), which we now explain.

We define the space of nonnegative $L^1(\TT^d)$-functions with bounded entropy, for every $R\in(0,\infty)$,
\begin{equation}\label{def_ent_r}\Ent_{\Phi,R}(\TT^d)=\left\{\rho\in L^1(\TT^d)\colon \rho\geq 0\;\;\textrm{a.e.\ and}\;\;\int_{\TT^d}\Psi_\Phi(\rho)\dx\leq R\right\}.\end{equation}
Proposition~\ref{bounded_entropy} below proves that $\Ent_{\Phi,R}(\TT^d)$ equipped with the induced $L^1(\TT^d)$-topology is a Polish space.  The proof of the large deviations principle follows the methods of \cite[Theorem~6]{BudDupMar2008}, which relies on establishing two facts.  First, we require the compactness of the family of solutions to the skeleton equation with $L^2$-bounded controls.  This is a straightforward consequence of Lemma~\ref{as_pre} and Proposition~\ref{frac_est}.  Second, in the scaling regime that $\ve K(\ve)^{d+2}\rightarrow 0$, we require that the solutions of the controlled equation \eqref{entropy_con_eq} converge to the solution of the skeleton equation.  We prove this statement in Theorem~\ref{weak_thm_con} below.  We prove the uniform large deviations principle in Theorem~\ref{prop_collapse_3}.

\begin{prop}\label{bounded_entropy} Let $\Phi\in\C([0,\infty))\cap\C^1_{\textrm{loc}}((0,\infty))$ satisfy Assumptions~\ref{as_unique},  \ref{as_equiv}, and \ref{as_compact}, let $R\in(0,\infty)$, and let $\Ent_{\Phi,R}(\TT^d)$ be defined by \eqref{def_ent_r}.  Then, $\Ent_{\Phi,R}(\TT^d)$ equipped with the $L^1(\TT^d)$-topology is a complete separable metric space.  \end{prop}

\begin{proof}  For complete details see \cite[Proposition~7.12]{FehGes19}.  The completeness is a consequence of the completeness of $L^1(\TT^d)$ and Fatou's lemma.  The separability follows from the convexity of $\Psi_\Phi$ and Assumption~\ref{as_compact}, which implies for each $R\in(0,\infty)$ that $L^2(\TT^d)\cap\Ent_{\Phi,R}(\TT^d)$ is dense in $\Ent_{\Phi,R}(\TT^d)$.  Since $L^2(\TT^d)\cap\Ent_{\Phi,R}(\TT^d)$ is separable as a closed, convex subset of the separable, reflexive, strictly convex Banach space $L^2(\TT^d)$, this completes the proof.\end{proof}

\begin{thm}\label{weak_thm_con}  Let $\Phi\in\C([0,\infty))\cap\C^1_{\textrm{loc}}((0,\infty))$ satisfy Assumptions~\ref{as_unique},  \ref{as_equiv}, \ref{as_compact}, let  $g^\ve,g\in L^2(\O\times[0,T];L^2(\TT^d;\R^d))$, for $\ve\in(0,1)$, be predictable processes that satisfy
\[\sup_{\ve\in(0,1)}\norm{g^\ve}_{L^\infty(\O;L^2(\TT^d\times[0,T];\R^d))}<\infty\;\;\textrm{and, as $\ve\rightarrow 0$,}\;\;g^\ve\rightarrow g\;\;\textrm{in distribution in}\;\;L^2(\TT^d\times[0,T];\R^d),\]
with $L^2(\TT^d\times[0,T];\R^d)$ endowed with the weak topology, let $\rho^\ve_0,\rho_0\in\Ent_\Phi(\TT^d)$, for $\ve\in(0,1)$, be such that
\[\rho^\ve_0\rightharpoonup\rho_0\;\;\textrm{as $\ve\rightarrow 0$ weakly in}\;\;L^1(\TT^d)\;\;\textrm{and}\;\;\sup_{\ve\in(0,1)}\left[\int_{\TT^d}\Psi_\Phi(\rho^\ve_0)\dx\right]<\infty,\]
and let $\{K(\ve)\}_{\ve\in(0,1)}$ be such that $\ve K(\ve)^{d+2}\rightarrow 0$ and $K(\ve)\rightarrow \infty$ as $\ve\rightarrow 0$.  Then the solutions $\{\rho^\ve=\rho^{\ve,K(\ve)}\}_{\ve\in(0,1)}$ of \eqref{entropy_con_eq} with initial data $\{\rho^\ve_0\}_{\ve\in(0,1)}$ satisfy, as $\ve\rightarrow 0$,
\[\rho^\ve\rightarrow \rho\;\;\textrm{in distribution on}\;\;L^1([0,T];L^1(\TT^d)),\] 
for $\rho$ the unique solution of \eqref{skeleton_equation} with control $g$ and initial data $\rho_0$.  \end{thm}

\begin{proof}  We will first consider nonnegative initial data $\rho^\ve_0$ and $\rho_0$ that are uniformly bounded in $L^2(\TT^d)$.  It follows from Lemma~\ref{as_pre}, Proposition~\ref{frac_est}, and \cite[Proposition~5.26]{FehGes21} that the laws of the solutions $\{\rho^\ve\}_{\ve\in(0,1)}$ are tight on $L^1([0,T];L^1(\TT^d))$, and it follows from Lemma~\ref{as_interpolate} and Proposition~\ref{frac_est} that the laws of $\{\Phi^\frac{1}{2}(\rho^\ve)\}_{\ve\in(0,1)}$ are tight in the metric topology of weak convergence on $L^2([0,T];H^1(\TT^d))$.  Finally, due to the $L^2$-integrability of the data, there exists a sequence of finite nonnegative kinetic measures $\{q^\ve\}_{\ve\in(0,1)}$ in the sense of \cite[Definitions~3.1, ~3.4]{FehGes21} that are uniformly bounded in expectation in the sense that
\[\sup_{\ve\in(0,1)}\E\left[q^\ve(\TT^d\times[0,T])\right]<\infty,\]
such that, for the kinetic function $\chi^\ve$ of $\rho^\ve$, for every $t\in[0,T]$ and $\psi\in\C^\infty_c(\TT^d\times(0,\infty))$, $\P$-a.s.,
\begin{align}\label{2_5000}
& \int_\R\int_{\TT^d}\chi^\ve(x,\xi,t)\psi(x,\xi) = \int_\R\int_{\TT^d}\overline{\chi}(\rho^\ve_0)\psi(x,\xi)-2\int_0^t\int_{\TT^d}\Phi^\frac{1}{2}(\rho^\ve)\nabla\Phi^\frac{1}{2}(\rho^\ve)\cdot(\nabla\psi)(x,\rho^\ve)
\\ \nonumber & + \int_0^t\int_{\TT^d} \Phi^\frac{1}{2}(\rho^\ve)P_{K(\ve)}g^\ve(x,t)\cdot (\nabla_x\psi)(x,\rho^\ve)+ \int_0^t\int_{\TT^d} \frac{2\Phi(\rho^\ve)}{\Phi'(\rho^\ve)}\nabla_x\Phi^\frac{1}{2}(\rho^\ve)\cdot P_{K(\ve)}g^\ve(x,t)(\partial_\xi\psi)(x,\rho^\ve)
\\ \nonumber & -\frac{\ve N_{K(\ve)}}{2}\int_0^t\int_{\TT^d}(\Phi^\frac{1}{2})'(\rho^\ve)\nabla\Phi^\frac{1}{2}(\rho^\ve)\cdot(\nabla\psi)(x,\rho^\ve)-\int_0^t\int_\R\int_{\TT^d}\partial_\xi\psi(x,\xi)\dd q^\ve
\\ \nonumber & +\frac{\ve M_{K(\ve)}}{2}\int_0^t\int_{\TT^d} \Phi(\rho^\ve)(\partial_\xi\psi)(x,\rho^\ve)-\sqrt{\ve}\int_0^t\int_{\TT^d}\psi(x,\rho^\ve)\nabla\cdot\left(\Phi^\frac{1}{2}(\rho^\ve)\dd\xi^{K(\ve)}\right),
\end{align}
for $M_{K(\ve)}=\sum_{\abs{k}\leq K(\ve)}k^2\simeq K(\ve)^{d+2}$ and for $P_{K(\ve)}g^\ve$ the Fourier projection of $g$. It is this term that determines the scaling for the large deviations principle, and it is the first term of the third line which necessitates that we consider test functions compactly supported in $\TT^d\times (0,\infty)$.  In general, for example when $\Phi^\frac{1}{2}(\xi)=\xi^\frac{1}{2}$, this term with $\psi=1$ is not integrable on the zero set of the solution.  We will first use the Skorokhod representation theorem in order to pass almost surely to the limit $\ve\rightarrow 0$ on an auxiliary probability space.

The difficulty in passing to the $\ve\rightarrow 0$ limit appears in the second term of the second line of \eqref{2_5000}, which as $\ve\rightarrow 0$ contains the product of weakly convergent sequences.  For every $\ve\in(0,1)$ let $p^\ve$ denote the nonnegative, almost surely finite measure on $\TT^d\times\R\times[0,T]$ defined by
\[\dd p^\ve = \delta_0(\xi-\rho^\ve)\abs{\nabla_x\Phi^\frac{1}{2}(\rho^\ve)}\abs{P_{K(\ve)}g^\ve}\dx\dt\;\;\textrm{on}\;\;\TT^d\times\R\times[0,T],\]
and observe that, for every $t\in[0,T]$,
\[\abs{\int_0^t\int_{\TT^d} \frac{2\Phi(\rho^\ve)}{\Phi'(\rho^\ve)}\nabla_x\Phi^\frac{1}{2}(\rho^\ve)\cdot P_{K(\ve)}g^\ve(x,t)(\partial_\xi\psi)(x,\rho^\ve)}\leq \int_0^t\int_{\TT^d}\frac{2\Phi(\xi)}{\Phi'(\xi)}\abs{\partial_\xi\psi(x,\xi)}\dd p^\ve.\]
We will characterize the limiting behavior of this term using the limiting behavior of these auxiliary measures.  Let $\overline{X}$ be the state space
\[L^1([0,T];L^1(\TT^d))\times L^2([0,T];H^1(\TT^d))\times\mathcal{M}(\TT^d\times\R\times[0,T])^2\times L^2(\TT^d\times[0,T];\R^d)\times \C([0,\infty);\R^\infty)\]
where $L^1([0,T];L^1(\TT^d))$ is equipped with the strong topology, where $L^2([0,T];H^1(\TT^d))$, the space of nonnegative Borel measures $\mathcal{M}(\TT^d\times\R\times[0,T])$, and $L^2(\TT^d\times[0,T];\R^d)$ are equipped with the metric topology of weak convergence, and $\C([0,\infty);\R^\infty)$ is equipped with the metric topology of component-wise convergence (see, for example, the metrics defined in \cite[Theorem~5.29]{FehGes21}).  It follows from the above and the assumptions on the controls that the laws of the random variables
\[\{X^\ve = (\rho^\ve,\Phi^\frac{1}{2}(\rho^\ve),q^\ve,p^\ve,g^\ve,(W^k,B^k)_{k\in\N_0})\}_{\ve\in(0,1)},\]
for the Brownian motions $(W^k,B^k)_{k\in\N_0}$ defined in Section~\ref{sldp_1}, are tight on $\overline{X}$.  Prokhorov's theorem (see, for example, Billingsley \cite[Chapter~1, Theorem~5.1]{Bil1999}) proves that, after passing to a subsequence still denoted $\ve\rightarrow 0$, there exists a probability measure $\mu$ on $\overline{X}$ such that, as $\ve\rightarrow 0$, we have $X^\ve\rightarrow\mu $ in law.  Since the space $\overline{X}$ is separable, the Skorokhod representation theorem (see, for example, \cite[Chapter~1, Theorem~6.7]{Bil1999}) proves that that there exists a stochastic basis $(\tilde{\O},\tilde{\F},\tilde{\P},(\tilde{\F}_t)_{t\in[0,\infty)})$ and $\overline{X}$-valued random variables $\{\tilde{X}^\ve\}_{\ve\in(0,1)}$ and $\tilde{X}$ on $\tilde{\O}$ such that, for every $\ve\in(0,1)$,
\[\tilde{X}^\ve = X^\ve\;\;\textrm{and}\;\;\tilde{X}=\mu\;\;\textrm{in law}.\]
And such that, $\tilde{\P}$-almost surely as $\ve\rightarrow 0$,
\[\tilde{X}^\ve\rightarrow\tilde{X}\;\;\textrm{in $\overline{X}$.}\]
It follows from \cite[Theorem~5.29]{FehGes21} that, for every $\ve\in(0,1)$, $\tilde{\P}$-almost surely there exists $\tilde{\rho}^\ve$, a kinetic measure $\tilde{q}^\ve$ for $\tilde{\rho}^\ve$ in the sense of \cite[Definitions~3.1, ~3.4]{FehGes21}, and Brownian motions $(\tilde{B}^{k,\ve},\tilde{W}^{k,\ve})_{k\in\N_0}$ such that
\[\tilde{X}^\ve = (\tilde{\rho}^\ve,\Phi^\frac{1}{2}(\tilde{\rho}^\ve),\tilde{q}^\ve,\tilde{p}^\ve,\tilde{g}^\ve,(\tilde{B}^{k,\ve},\tilde{W}^{k,\ve})_{k\in\N_0}),\]
that $\tilde{\P}$-almost satisfies \eqref{2_5000} with respect to the noise $\dd\tilde{\xi}^{K(\ve),\ve}$ defined by $(\tilde{B}^{k,\ve},\tilde{W}^{k,\ve})$ as in \eqref{final_noise}.  Similarly, following \cite[Theorem~5.29]{FehGes21}, it follows that
\[\tilde{X} = (\tilde{\rho},\Phi^\frac{1}{2}(\tilde{\rho}),\tilde{q},\tilde{p},\tilde{g},(\tilde{B}^{k},\tilde{W}^{k})_{k\in\N_0}),\]
for a kinetic measure $\tilde{q}$ for $\tilde{\rho}$.  In order to characterize $\tilde{\rho}$, will pass to the $\ve\rightarrow 0$ limit in \eqref{2_5000}.  However, the difficulty in doing so is that we cannot identify the limit
\[\lim_{\ve\rightarrow 0}\left(\int_0^t\int_{\TT^d} \frac{2\Phi(\tilde{\rho}^\ve)}{\Phi'(\tilde{\rho}^\ve)}\nabla_x\Phi^\frac{1}{2}(\tilde{\rho}^\ve)\cdot P_{K(\ve)}\tilde{g}^\ve(x,t)(\partial_\xi\psi)(x,\tilde{\rho}^\ve)\right),\]
due to the product of the weakly convergent gradient and control.

The essential observation is that this ill-defined limit does not appear in the classical weak formulation of the skeleton equation.  We will therefore prove $\tilde{\P}$-almost surely that the $\tilde{\rho}^\ve$ converge to a weak solution of the skeleton equation in the sense of Definition~\ref{classical_weak}.  It then follows from Theorem~\ref{thm_unique} and Theorem~\ref{equiv} that this uniquely characterizes the limit.  Precisely, for every $\d\in(0,1)$, let $\phi_\d\colon[0,\infty)\rightarrow[0,1]$ be a smooth function that satisfies, for some $c\in(0,\infty)$ independent of $\delta\in(0,\nicefrac{1}{2})$,
\[\phi_\d=0\;\;\textrm{on}\;\;[0,\d]\cup [2\delta^{-1},\infty),\;\;\phi_\d=1\;\;\textrm{on}\;\;[2\d,\delta^{-1}],\;\;\textrm{and}\;\;\abs{\phi_\d'}\leq \nicefrac{c}{\d}\mathbf{1}_{\{\xi\in [\d,2\d]\}}+c\d\mathbf{1}_{\{\xi\in[\delta^{-1},2\delta^{-1}]\}},\]
and let $\eta\in\C^\infty(\TT^d)$ be arbitrary.  We will consider test functions of the form $\psi_\d(x,\xi)=\phi_\d(\xi)\eta(x)$ and pass first to the limit $\ve\rightarrow 0$ and then $\d\rightarrow 0$ to recover the classic weak formulation of the skeleton equation.

Returning to \eqref{2_5000}, since the initial data is only converging weakly, for every $\ve,\d\in(0,1)$ we use the definition of the kinetic function and $\psi_\d$ to rewrite the first term on the righthand side as
\[ \int_\R\int_{\TT^d}\overline{\chi}(\rho^\ve_0)\psi_\d(x,\xi) = \int_{\TT^d}\rho^\ve_0(x)\eta(x)+\int_\R\int_{\TT^d}\overline{\chi}(x,\xi)(\rho^\ve_0)(\phi_\d(\xi)-1)\eta(x).\]
It follows from the definition of $\phi_\d$ and the boundedness of $\eta$, the kinetic function, and the torus that, for some $c\in(0,\infty)$ independent of $\ve,\d\in(0,1)$,
\begin{equation}\label{990_1}\abs{ \int_\R\int_{\TT^d}\overline{\chi}(\rho^\ve_0)(x,\xi)\psi_\d(x,\xi)-\int_{\TT^d}\rho^\ve_0(x)\eta(x)}\leq c\d.\end{equation}
The scaling limit $\ve K(\ve)^{d+2}\rightarrow 0$, the fact that $M_{K(\ve)}\leq cK(\ve)^{d+2}$, the fact that $N_{K(\ve)}\leq cK(\ve)^d$, the estimates of Proposition~\ref{weak_est}, and the compact support of $\psi$ prove that, $\tilde{\P}$-almost surely,
\begin{equation}\label{990_2}\lim_{\ve\rightarrow 0}\left(\sup_{t\in[0,T]}\abs{\frac{\ve N_{K(\ve)}}{2}\int_0^t\int_{\TT^d}(\Phi^\frac{1}{2})'(\tilde{\rho}^\ve)\nabla\Phi^\frac{1}{2}(\tilde{\rho}^\ve)\cdot(\nabla\psi_\d)(x,\tilde{\rho}^\ve)}\right)=0,\end{equation}
and that, $\tilde{\P}$-almost surely,
\begin{equation}\label{990_3}\lim_{\ve\rightarrow 0}\left(\sup_{t\in[0,T]}\abs{\frac{\ve M_{K(\ve)}}{2}\int_0^t\int_{\TT^d} \Phi(\tilde{\rho}^\ve)(\partial_\xi\psi)(x,\tilde{\rho}^\ve)}\right)=0.\end{equation}
For the martingale term, the Burkholder-Davis-Gundy inequality (see, for example, \cite[Chapter~4, Theorem~4.1]{RevYor1999}) and the definition of $\psi_\d$ prove that there exists $c\in(0,\infty)$ such that, for every $\ve,\d\in(0,1)$,
\[\E\left[\sup_{t\in[0,T]}\abs{\sqrt{\ve}\int_0^t\int_{\TT^d}\psi_\d(x,\tilde{\rho}^\ve)\nabla\cdot\left(\Phi^\frac{1}{2}(\tilde{\rho}^\ve)\dd\xi^{K(\ve)}\right)}^2\right]\leq c\ve M_{K(\ve)}\E\left[\int_{\TT^d}\abs{\nabla\Phi^\frac{1}{2}(\tilde{\rho}^\ve)}^2+\Phi(\tilde{\rho}^\ve)\right],\]
and therefore, along the scaling limit $\ve K(\ve)^{d+2}\rightarrow 0$, it follows from the estimates of Proposition~\ref{weak_est} that, almost surely along a subsequence $\ve\rightarrow 0$,
\begin{equation}\label{990_4}\lim_{\ve\rightarrow 0}\sup_{t\in[0,T]}\abs{\sqrt{\ve}\int_0^t\int_{\TT^d}\psi_\d(x,\tilde{\rho}^\ve)\nabla\cdot\left(\Phi^\frac{1}{2}(\tilde{\rho}^\ve)\dd\xi^{K(\ve)}\right)} = 0.\end{equation}
The remaining terms are treated using the facts that, $\tilde{\P}$-almost surely, we have the strong convergence of $\tilde{\rho}^\ve$ to $\tilde{\rho}$ in $L^1([0,T];L^1(\TT^d))$, we have $P_{K(\ve)}\tilde{g}^\ve\rightharpoonup \tilde{g}$ and $\nabla\Phi^\frac{1}{2}(\tilde{\rho}^\ve)\rightharpoonup\nabla\Phi^\frac{1}{2}(\tilde{\rho})$ weakly in $L^2(\TT^d\times[0,T];\R^d)$, and the weak convergence of $\tilde{q}^\ve$ to $\tilde{q}$ and $\tilde{p}^\ve$ to $\tilde{p}$.  This proves with \eqref{990_1}, \eqref{990_2}, \eqref{990_3}, and \eqref{990_4} that $\tilde{\P}$-almost surely, for almost every $t\in[0,T]$ after passing along a subsequence $\ve\rightarrow 0$ in \eqref{2_5000}, for the kinetic function $\tilde{\chi}$ of $\tilde{\rho}$, for some $c\in(0,\infty)$ independent of $\d\in(0,1)$,
\begin{align}\label{990_6}
& \left|\int_\R\int_{\TT^d}\tilde{\chi}(x,\xi,t)\psi_\d(x,\xi)-\int_{\TT^d}\rho_0\eta+2\int_0^t\int_{\TT^d}\Phi^\frac{1}{2}(\tilde{\rho})\nabla\Phi^\frac{1}{2}(\tilde{\rho})\cdot(\nabla\psi_\d)(x,\tilde{\rho})\right.
\\ \nonumber & \left.-\int_0^t\int_{\TT^d} \Phi^\frac{1}{2}(\tilde{\rho})\tilde{g}(x,t)\cdot (\nabla_x\psi_\d)(x,\tilde{\rho})\right|\leq \abs{\int_0^t\int_\R\int_{\TT^d} \left(\frac{2\Phi(\xi)}{\Phi'(\xi)}\abs{\phi'_\d(\xi)}\eta(x)\dd\tilde{p}+\abs{\phi'_\d(\xi)}\eta(x)\dd \tilde{q}\right)}+c\d.
\end{align}
It follows from \cite[Definition~3.4, Proposition~4.6]{FehGes21}, the definition of $\phi_\d$, the boundedness of $\eta$, and the finiteness of $\tilde{q}$ that
\begin{equation}\label{990_7} \liminf_{\d\rightarrow 0}\E\left[\int_0^t\int_\R\int_{\TT^d}\abs{\phi'_\d(\xi)}\eta(x)\dd \tilde{q}\right]=0,\end{equation}
and it follows from the finiteness of $\tilde{p}$, the definition of $\phi_\d$, the boundedness of $\eta$, and  \eqref{skel_continuity_3} that, for some $c\in(0,\infty)$ independent of $\d\in(0,1)$,
\begin{equation}\label{990_8} \liminf_{\d\rightarrow 0}\E\left[\int_0^t\int_\R\int_{\TT^d}\frac{2\Phi(\xi)}{\Phi'(\xi)}\abs{\phi'_\d(\xi)}\eta(x)\dd\tilde{p}\right]\leq \liminf_{\d\rightarrow 0}\E\left[c\int_0^t\int_{[\d,2\d]\cup[\d^{-1},2\d^{-1}]}\int_{\TT^d}\dd\tilde{p}\right] =0.\end{equation}
Finally, it follows from the definition of $\phi_\d$ that, $\tilde{\P}$-almost surely for every $t\in[0,T]$,
\[\lim_{\d\rightarrow 0}\left(\int_0^t\int_{\TT^d}\Phi^\frac{1}{2}(\tilde{\rho})\nabla\Phi^\frac{1}{2}(\tilde{\rho})\cdot(\nabla\psi_\d)(x,\tilde{\rho})\right) = \int_{\TT^d}\Phi^\frac{1}{2}(\tilde{\rho})\nabla\Phi^\frac{1}{2}(\tilde{\rho})\cdot\nabla\eta\mathbf{1}_{\{\tilde{\rho}>0\}}.\]
Since by assumption $\Phi(0)=0$ and $\Phi$ is strictly increasing, we have that $\{\tilde{\rho}=0\}=\{\Phi^\frac{1}{2}(\tilde{\rho})=0\}$, and it follows from Stampacchia's lemma (see, for example, Evans \cite[Chapter~5, Exercises~17,18]{Eva2010}) that $\nabla\Phi^\frac{1}{2}(\tilde{\rho})=0$ almost everywhere on the set $\{\Phi^\frac{1}{2}(\tilde{\rho})=0\}$.  Therefore,
\begin{equation}\label{990_9}\lim_{\d\rightarrow 0}\left(\int_0^t\int_{\TT^d}\Phi^\frac{1}{2}(\tilde{\rho})\nabla\Phi^\frac{1}{2}(\tilde{\rho})\cdot(\nabla\psi_\d)(x,\tilde{\rho})\right)= \int_{\TT^d}\Phi^\frac{1}{2}(\tilde{\rho})\nabla\Phi^\frac{1}{2}(\tilde{\rho})\cdot\nabla\eta.\end{equation}
Similarly it follows from $\{\tilde{\rho}=0\}=\{\Phi^\frac{1}{2}(\tilde{\rho})=0\}$ that, $\tilde{\P}$-almost surely for every $t\in[0,T]$,
\begin{equation}\label{990_10} \lim_{\d\rightarrow 0}\int_0^t\int_{\TT^d} \Phi^\frac{1}{2}(\tilde{\rho})\tilde{g}(x,t)\cdot (\nabla_x\psi_\d)(x,\tilde{\rho}) = \int_0^t\int_{\TT^d}\Phi^\frac{1}{2}(\tilde{\rho})\tilde{g}(x,t)\cdot\nabla\eta,\end{equation}
and from the definition of $\phi_\d$ and the kinetic function that, $\tilde{\P}$-almost surely for every $t\in[0,T]$,
\begin{equation}\label{990_11}\lim_{\d\rightarrow 0}\int_\R\int_{\TT^d}\tilde{\chi}(x,\xi,t)\psi_\d(x,\xi) = \int_{\TT^d}\tilde{\rho}(x,t)\eta(x).\end{equation}
Returning to \eqref{990_6}, it follows from \eqref{990_7}, \eqref{990_8}, \eqref{990_9}, \eqref{990_10}, and \eqref{990_11} that, after passing to a subsequence $\d\rightarrow 0$, $\tilde{\P}$-almost surely for almost every $t\in[0,T]$,
\[\int_{\TT^d}\rho(x,t)\eta(x) = \int_{\TT^d}\rho_0\eta-2\int_0^t\int_{\TT^d}\Phi^\frac{1}{2}(\tilde{\rho})\nabla\Phi^\frac{1}{2}(\tilde{\rho})\cdot\nabla\eta +\int_0^t\int_{\TT^d} \Phi^\frac{1}{2}(\tilde{\rho})\tilde{g}\cdot \nabla \eta.\]
We conclude that $\tilde{\rho}$ is $\tilde{\P}$-almost surely a weak solution of the skeleton equation with control $\tilde{g}$ in the sense of Definition~\ref{classical_weak}, which by Theorem~\ref{thm_unique} and Theorem~\ref{equiv} is unique.  Since the laws of $\tilde{\rho}^\ve$ and $\rho^\ve$ are the same, this completes the proof for bounded initial data in $L^2(\TT^d)$.  The case that the $\rho^\ve_0$ and $\rho_0$ are bounded in $\Ent_\Phi(\TT^d)$ then follows by approximating the $\rho^\ve_0$ by $(\rho^\ve_0\wedge n)\in L^2(\TT^d)$, the almost sure $L^1$-contraction property of \cite[Theorem~4.7]{FehGes21}, the $L^1$-contraction property of Theorem~\ref{thm_unique}, and the triangle inequality.  This completes the proof. \end{proof}

\begin{definition}\label{def_uniform_Laplace} Let $(\O,\F,\P)$ be a probability space, let $Y_0$ and $Y$ be a Polish spaces, let $\dd_Y\colon Y^2\rightarrow[0,\infty)$ be the metric on $Y$, let $\{X^\ve_{y_0}\colon\O\rightarrow Y\}_{\ve\in(0,1),y_0\in Y_0}$ be random variables, and let $\{I_{y_0}\colon Y\rightarrow [0,\infty]\}_{y_0\in Y}$ be good rate functions with compact level sets on compact sets in the sense that, for every compact set $K\subseteq Y_0$ and $M\in(0,\infty)$,
\[\left(\cup_{y_0\in K}\{y\in Y\colon I_{y_0}(y)\leq M\}\right)\subseteq Y\;\;\textrm{is compact.}\]
We say that the random variables $\{X^\ve_{y_0}\colon\O\rightarrow Y\}_{\ve\in(0,1),y_0\in Y_0}$ satisfy a uniform large deviations principle with respect to compact subsets of $Y_0$, with good rate functions $\{I_{y_0}\}_{y_0\in Y}$ with compact level sets on compact sets, if the following two conditions are satisfied.
\begin{enumerate}
\item Lower bound:  for every compact set $K\subseteq Y_0$ and $\delta,M\in(0,\infty)$,
\[\liminf_{\ve\rightarrow 0}\left(\inf_{y_0\in K}\left[\inf_{y\in\{x\in Y\colon I_{y_0}(x)\leq M\}}\left(\ve\log(\P\left[\dd_Y(X^\ve_{y_0},y)<\delta\right] +I_{y_0}(y) \right)\right]\right)\geq 0.\]
\item Upper bound:  for every compact set $K\subseteq Y_0$ and $\delta,M\in(0,\infty)$,
\[\limsup_{\ve\rightarrow 0}\left(\sup_{y_0\in K}\left[\sup_{M'\leq M}\left(\ve\log\left(\P\left[\inf_{y\in\{x\in Y\colon I_{y_0}(x)\leq M'\}} d_Y(X^\ve,y)\geq \delta\right]\right) +M' \right)\right]\right)\leq 0.\]
\end{enumerate}
\end{definition}

\begin{thm}\label{prop_collapse_3}  Let $\Phi\in\C([0,\infty))\cap\C^1_{\textrm{loc}}((0,\infty))$ satisfy Assumptions~\ref{as_unique},  \ref{as_equiv}, and \ref{as_compact} and let $\{K(\ve)\}_{\ve\in(0,1)}$ satisfy, as $\ve\rightarrow 0$,
\[\ve K(\ve)^{d+2}\rightarrow 0\;\;\textrm{and}\;\;K(\ve)\rightarrow\infty.\]
Then, the rate functions $\{I_{\rho_0}\}_{\rho_0\in\Ent_\Phi(\TT^d)}$  defined in \eqref{collapse_ldp_rate} are good rate functions with compact level sets on compact sets.  For every $\rho_0\in\Ent_\Phi(\TT^d)$ the solutions $\{\rho^\ve(\rho_0)\}_{\ve\in(0,1)}$ of \eqref{collapse_eq} satisfy a large deviations principle with rate function $I_{\rho_0}$ on $L^1([0,T];L^1(\TT^d))$.  Furthermore, for every $R\in(0,\infty)$, the solutions satisfy a uniform large deviations principle on $L^1([0,T];L^1(\TT^d))$ with respect to weakly $L^1(\TT^d)$-compact subsets of $\Ent_{\Phi,R}(\TT^d)$.
\end{thm}

\begin{proof}  The proof relies on an application of the weak approach to large deviations \cite[Theorem~6]{BudDupMar2008}, the equivalence of uniform Laplace and large deviations principles with respect to compact subsets of the initial data \cite[Theorem~4.3]{BudDupSal}, Lemma~\ref{as_pre}, Proposition~\ref{frac_est}, and Theorem~\ref{weak_thm_con}.  To apply the framework of \cite[Theorem~6]{BudDupMar2008}, it is necessary to prove the compactness of the solution set of the skeleton equation with uniformly bounded initial data and uniformly bounded controls, and to prove the convergence in distribution of the controlled SPDE \eqref{entropy_con_eq} to the skeleton equation.  The compactness is an immediate consequence of Lemma~\ref{as_pre} and Proposition~\ref{frac_est}, and the required convergence of the controlled SPDE to the skeleton equation is exactly Theorem~\ref{weak_thm_con}.  This completes the proof.  \end{proof}

\begin{remark}\label{smooth_white_noise}  We emphasize that these techniques apply for general noise of the form $\xi^\d = \sum_{k=1}^\infty f^\d_k\dd B^k_t$ provided that, for every $\d\in(0,1)$, the sums $\sum_{k=1}^\infty (f^\d_k)^2$ and $\sum_{k=1}^\infty\abs{\nabla f^\d_k}^2$ are continuous on $\TT^d$ and provided that for every $\d\in(0,1)$ the noise is probabilistically stationary in the sense that the quadratic variation $\sum_{k=1}^\infty (f^\d_k)^2$ is constant on $\TT^d$.  In particular, these techniques apply without any essential changes to spatial smoothings $\xi^\d = (\xi*\eta^\d)$ of space-time white noise for a standard convolution kernel $\eta^\d$ of scale $\d\in(0,1)$ on $\TT^d$.  In this case, these methods prove that the solutions satisfy the uniform large deviations principle in the scaling regime $\ve\delta(\ve)^{-(d+2)}\rightarrow 0$, in exact analogy with Theorem~\ref{prop_collapse_3}.
\end{remark}

\section{$\Gamma$-convergence of rate functions}\label{sec_gamma}  In this section, we will first show that, for every $K\in\N$, the solutions of the equation
\begin{equation}\label{fd_eq} \partial_t\rho^\ve = \Delta\Phi(\rho^\ve)-\sqrt{\ve}\nabla\cdot\left(\Phi^\frac{1}{2}(\rho^\ve)\circ\xi^K\right)\;\; \textrm{in}\;\;\TT^d\times(0,\infty)\;\;\textrm{with}\;\;\rho^\ve(\cdot,0)=\rho_0, \end{equation}
satisfy a small-noise large deviations principle with rate function
\begin{equation}\label{collapse_ldp_rate_rate}I^K_{\rho_0}(\rho)=\frac{1}{2}\inf\left\{\norm{g}^2_{L^2([0,T];L^2(\TT^d))}\colon \partial_t\rho=\Delta\Phi(\rho)-\nabla\cdot \left(\Phi^\frac{1}{2}(\rho)P_Kg\right)\;\;\textrm{and}\;\;\rho(\cdot,0)=\rho_0\right\},\end{equation}
for $P_Kg$ the Fourier projection of $g\in L^2([0,T];L^2(\TT^d))$ onto the span of $\{\sin(k\cdot x),\cos(k\cdot x)\}_{\{\abs{k}\leq K\}}$.  Then, in Lemma~\ref{rf_min} and Theorem~\ref{thm:gamma-convergence} we prove that the rate functions \eqref{collapse_ldp_rate_rate} converge as $K\rightarrow\infty$ in the sense of $\Gamma$-convergence to the rate function \eqref{collapse_ldp_rate}.

\begin{prop}  Let $K\in\N$ and let $\Phi\in\C([0,\infty))\cap\C^1_{\textrm{loc}}((0,\infty))$ satisfy Assumptions~\ref{as_unique}, ~\ref{as_equiv}, and \ref{as_compact}, and let $\rho_0\in \Ent_\Phi(\TT^d)$.  Then, the rate functions $\{I^K_{\rho_0}\}_{\rho_0\in\Ent_\Phi(\TT^d)}$  defined in \eqref{collapse_ldp_rate_rate} are good rate functions with compact level sets on compact sets.  For every $\rho_0\in\Ent_\Phi(\TT^d)$ the solutions $\{\rho^{\ve,K}(\rho_0)\}_{\ve\in(0,1)}$ of \eqref{fd_eq} satisfy a large deviations principle with rate function $I^K_{\rho_0}$ on $L^1([0,T];L^1(\TT^d))$.  Furthermore, for every $R\in(0,\infty)$, the solutions satisfy a uniform large deviations principle on $L^1([0,T];L^1(\TT^d))$ with respect to weakly $L^1(\TT^d)$-compact subsets of $\Ent_{\Phi,R}(\TT^d)$.  \end{prop}

\begin{proof}  The proof is identical to the proof of Theorem~\ref{prop_collapse_3}, and is obtained by repeating the same argument in the scaling regime that $K\in\N$ is fixed and $\ve\rightarrow 0$.\end{proof}

\begin{lem}\label{rf_min}  Let $\Phi\in\C([0,\infty))\cap\C^1_{\textrm{loc}}((0,\infty))$ satisfy Assumptions~\ref{as_unique}, ~\ref{as_equiv}, and \ref{as_compact}, let $\rho_0\in\Ent_\Phi(\TT^d)$, let $\{I^K_{\rho_0}\}_{K\in\N}$ be defined in \eqref{collapse_ldp_rate_rate}, and let $I_{\rho_0}$ be defined in \eqref{collapse_ldp_rate}.  Then, for every $K\in\N$, the infimums appearing in \eqref{collapse_ldp_rate} and \eqref{collapse_ldp_rate_rate} are attained. Furthermore, for every $\rho\in L^1([0,T];L^1(\TT^d))$ satisfying $I_{\rho_0}(\rho)<\infty$ (alternately, $I^K_{\rho_0}(\rho)<\infty$ for $K\in\N$), there exists a unique $g\in L^2([0,T];L^2(\TT^d))$ satisfying
\[I_{\rho_0}(\rho) = \frac{1}{2}\norm{g}^2_{L^2([0,T];L^2(\TT^d;\R^d))}\;\;\left(\textrm{alternately,}\;\;I^K_{\rho_0}(\rho)=\frac{1}{2}\norm{g}^2_{L^2([0,T];L^2(\TT^d;\R^d))}\right).\]
\end{lem}

\begin{proof} The face that the infimums appearing in \eqref{collapse_ldp_rate_rate} and \eqref{collapse_ldp_rate} are achieved is an immediate consequence of Theorem~\ref{equiv}, Lemma~\ref{as_pre}, and Proposition~\ref{frac_est}, since Definition~\ref{classical_weak} is stable with respect to weak convergence of the control.  For the second statement, suppose that $\rho\in L^1([0,T];L^1(\TT^d))$ satisfies $I_{\rho_0}(\rho)<\infty$ (alternately, $I^K_{\rho_0}(\rho)<\infty$ for $K\in\N$).  Then, by the above, the set
\[\left\{g\in L^2([0,T];L^2(\TT^d;\R^d))\colon \partial_t\rho = \Delta(\Phi(\rho))-\nabla\cdot(\Phi^\frac{1}{2}(\rho)g)\right\},\]
is a non-empty, weaky closed, convex subset of $L^2([0,T];L^2(\TT^d;\R^d))$.  The uniqueness now follows from the uniform convexity of the $L^2$-norm, which completes the proof.  \end{proof}

\begin{thm}\label{thm:gamma-convergence}  Let $\Phi\in\C([0,\infty))\cap\C^1_{\textrm{loc}}((0,\infty))$ satisfy Assumptions~\ref{as_unique}, ~\ref{as_equiv}, and \ref{as_compact}, let $\rho_0\in\Ent_\Phi(\TT^d)$, let $\{I^K_{\rho_0}\}_{K\in\N}$ be defined in \eqref{collapse_ldp_rate_rate}, and let $I_{\rho_0}$ be defined in \eqref{collapse_ldp_rate}.  Then, as $K\rightarrow\infty$,
\[I^K_{\rho_0}\overset{\Gamma}{\longrightarrow}I_{\rho_0}.\]
\end{thm}

\begin{proof}  Let $\rho\in L^1([0,T];L^1(\TT^d))$.  In order to establish the $\Gamma$-convergence, it is necessary to prove the following two properties.
\begin{enumerate}[(i)]
\item  For every sequence $\{\rho_{K_n}\}_{n\in\N}\subseteq L^1([0,T];L^1(\TT^d))$ satisfying as $n\rightarrow\infty$ that $K_n\rightarrow \infty$ and
\[\rho_{K_n}\rightarrow \rho\;\;\textrm{strongly in}\;\;L^1([0,T];L^1(\TT^d)),\]
we have that
\begin{equation}\label{gc_1}\liminf_{n\rightarrow\infty}I^{K_n}_{\rho_0}(\rho_{K_n})\geq I_{\rho_0}(\rho).\end{equation}
\item  There exists a sequence $\{\rho_{K_n}\}_{n\in N}\subseteq L^1([0,T];L^1(\TT^d))$ satisfying as $n\rightarrow \infty$ that $K_n\rightarrow \infty$ and that
\[\rho_{K_n}\rightarrow \rho\;\;\textrm{strongly in}\;\;L^1([0,T];L^1(\TT^d)),\]
such that
\begin{equation}\label{gc_2}\limsup_{n\rightarrow\infty}I^{K_n}_{\rho_0}(\rho_{K_n})\leq I_{\rho_0}(\rho).\end{equation}
\end{enumerate}

\emph{Proof of \eqref{gc_1}.}  Let $\rho\in L^1([0,T];L^1(\TT^d))$ and suppose that $\{\rho_{K_n}\}_{n\in N}\subseteq L^1([0,T];L^1(\TT^d))$ satisfies as $n\rightarrow\infty$ that $K_n\rightarrow \infty$ and that
\[\rho_{K_n}\rightarrow \rho\;\;\textrm{strongly in}\;\;L^1([0,T];L^1(\TT^d)).\]
If $\liminf_{n\rightarrow\infty}I^{K_n}(\rho_{K_n})=\infty$ then \eqref{gc_1} is satisfied.  If not, fix a subsequence $\{K_m\}_{m\in\N}\subseteq \{K_n\}_{n\in\N}$ which satisfies for every $m\in\N$ that
\[I^{K_m}_{\rho_0}(\rho_{K_m})<\infty,\]
and that
\begin{equation}\label{gc_4}\lim_{m\rightarrow\infty} I^{K_m}_{\rho_0}(\rho_{K_m})=\liminf_{n\rightarrow\infty}I^{K_n}(\rho_{K_n}).\end{equation}
By Lemma~\ref{rf_min}, for every $m\in\N$ fix the unique $g_m\in L^2(\TT^d\times[0,T];\R^d)$ satisfying
\begin{equation}\label{gc_5} \frac{1}{2}\norm{g_m}^2_{L^2([0,T];L^2(\TT^d;\R^d))}=I^{K_m}_{\rho_0}(\rho_{K_m}).\end{equation}
It follows from \eqref{gc_4} and \eqref{gc_5} that there exists $g\in L^2([0,T];L^2(\TT^d;\R^d))$ such that, after passing to a further subsequence $\{K_{m'}\}_{m'\in\N}\subseteq \{K_m\}_{m\in\N}$, as $m'\rightarrow\infty$,
\[g_{m'}\rightharpoonup g\;\;\textrm{weakly in}\;\;L^2(\TT^d\times[0,T];\R^d).\]
Since by assumption, as $m'\rightarrow\infty$,
\[\rho_{K_{m'}}\rightarrow \rho\;\;\textrm{strongly in}\;\;L^1([0,T];L^1(\TT^d)),\]
it follows from Theorem~\ref{equiv}, Theorem~\ref{gen_exist}, and Proposition~\ref{weak_strong} that $\rho$ is a renormalized kinetic solution of \eqref{skeleton_equation} in the sense of Definition~\ref{skel_sol_def}.  The weak lower-semicontinuity of the $L^2$-norm, the definition of $I_{\rho_0}$, and \eqref{gc_4} prove that
\[\liminf_{n\rightarrow\infty}I^{K_n}_{\rho_0}(\rho_{K_n})\geq \frac{1}{2}\norm{g}_{L^2(\TT^d\times[0,T];\R^d)}^2\geq I_{\rho_0}(\rho),\]
which completes the proof of \eqref{gc_1}.

\emph{The proof of \eqref{gc_2}.}  Let $\rho\in L^1([0,T];L^1(\TT^d))$.  If $I_{\rho_0}(\rho)=\infty,$ then \eqref{gc_2} is satisfied.  If not, Lemma~\ref{rf_min} implies that there exists a unique $g\in L^2(\TT^d\times[0,T];\R^d)$ such that
\begin{equation}\label{gc_12} \frac{1}{2}\norm{g}^2_{L^2(\TT^d\times[0,T];\R^d)}=I_{\rho_0}(\rho).\end{equation}
For every $K\in\N$ let $\rho_K\in L^1([0,T];L^1(\TT^d))$ denote the unique solution of the equation
\[\partial_t \rho_K = \Delta\Phi(\rho_K)-\nabla\cdot\left(\Phi^{\frac{1}{2}}(\rho_K)P_Kg\right)\;\; \textrm{in}\;\;\TT^d\times(0,T)\;\;\textrm{with}\;\;\rho_K(\cdot,0)=\rho_0.\]
Since, as $K\rightarrow 0$,
\begin{equation}\label{gc_14} P_Kg\rightarrow g\;\;\textrm{strongly in}\;\;L^2(\TT^d\times[0,T];\R^d),\end{equation}
Proposition~\ref{weak_strong} proves that there exists $\tilde{\rho}\in L^1([0,T];L^1(\TT^d))$ such that, as $K\rightarrow\infty$,
\begin{equation}\label{gc_015}\rho_K\rightarrow\tilde{\rho}\;\;\textrm{strongly in}\;\;L^1([0,T];L^1(\TT^d)),\end{equation}
and such that $\tilde{\rho}$ solves \eqref{skeleton_equation} in the sense of Definition~\ref{skel_sol_def}.  Since Theorem~\ref{thm_unique} proves that the solution is unique,
\begin{equation}\label{gc_15} \tilde{\rho}=\rho\;\;\textrm{in}\;\;L^1([0,T];L^1(\TT^d)).\end{equation}
Therefore, it follows from \eqref{gc_015} and \eqref{gc_15} that, as $K\rightarrow\infty$,
\begin{equation}\label{gc_16} \rho_K\rightarrow \rho\;\;\textrm{strongly in}\;\;L^1([0,T];L^1(\TT^d)),\end{equation}
and the definition of the $\{I^K_{\rho_0}\}_{K\in\N}$, \eqref{gc_12}, and \eqref{gc_14} prove that
\begin{equation}\label{gc_17}\limsup_{K\rightarrow\infty}I^K_{\rho_0}(\rho_K)\leq\limsup_{K\rightarrow\infty}\frac{1}{2}\norm{P_Kg}^2_{L^2(\TT^d\times[0,T];\R^d)}=\frac{1}{2}\norm{g}^2_{L^2(\TT^d\times[0,T];\R^d)}=I_{\rho_0}(\rho).\end{equation}
In combination \eqref{gc_16} and \eqref{gc_17} complete the proof of \eqref{gc_2}.  This completes the proof.  \end{proof}

\section{The lower semicontinuous envelope of the restricted rate function}\label{sec_lsc_envelope}

As outlined in the introduction, the general approach to large deviations in interacting particle systems introduced in \cite{DV89,KOV89} relies on a separate derivation of large deviations lower and upper bounds. As a consequence of these arguments, both estimates lead to possibly different rate functions. The proof of a large deviations principle relies on establishing their identity, which proves to be a challenging problem. In particular, this requires the characterization of the l.s.c.\ envelope of the lower-bound rate function restricted to smooth fluctuations, which in the case of the zero range process has remained an open problem since \cite{BKL95}. The proofs of these facts is the main point of this section.

More precisely, let $I^0\colon L^1([0,T];L^1(\TT^d))\rightarrow[0,\infty]$ be the rate function defined by
\small
\begin{align}\label{lsc_0111}
 I^0(\rho) :=  \sup_{\psi\in\C^\infty_c(\TT^d\times[0,T))} & \left(-\int_0^T\int_{\TT^d}\Phi(\rho)\Delta\psi- \int_0^T\int_{\TT^d}\rho\partial_t\psi-\int_{\TT^d}\psi(x,0)\rho(x,0)-\frac{1}{2}\int_0^T\int_{\TT^d}\Phi(\rho)\abs{\nabla\psi}^2\right),
\end{align}
\normalsize
if $\rho\in \C([0,T];L^1(\TT^d))$ with $\rho(\cdot,0)\in\Ent_\Phi(\TT^d)$, and $I^0(\rho)=\infty$ otherwise.  It has been demonstrated in \cite[Theorem 1 and p.99]{BKL95} that, for the space $\mcS$ of smooth fluctuations defined in Definition~\ref{smooth_fluctuation} below, the application of the approach of \cite{DV89,KOV89} to the zero range process yields a large deviations lower bound with rate function $I^{\textrm{lo}}\colon L^1([0,T];L^1(\TT^d))\rightarrow[0,\infty]$ defined by
\begin{equation}\label{lsc_0112}  I^{\textrm{lo}}(\rho):= \overline{(I^0)_{|\mcS}}(\rho).   \end{equation}
That is, $I^{\textrm{lo}}$ is the l.s.c.\ envelope of $I^0$ restricted to smooth fluctuations $\rho \in \mcS$.

In contrast, the results of \cite[Theorem 1]{BKL95}, together with subsequent contributions \cite{Qu.Ya1998,Qu.Re.Va1999,La.Ts2018} in related settings, lead to the large deviations upper bound with rate function $I^{\textrm{up}}\colon L^1([0,T];L^1(\TT^d))\rightarrow[0,\infty]$ defined by
\begin{equation}\label{lsc_113} I^{\textrm{up}}(\rho):= I^0(\rho), \end{equation}
for $\rho\in \C([0,T];L^1(\TT^d))$ with finite entropy-dissipation.  That is, $I^{\textrm{up}}(\rho)=\infty$ if $\rho\notin \C([0,T];L^1(\TT^d))$ or if $\nabla\Phi^\frac{1}{2}(\rho)\not\in L^2([0,T];L^2(\TT^d;\R^d))$.

An essential difficulty and open problem since \cite{BKL95} is to identify the resulting rate functions $I^{\textrm{lo}}$ and  $I^{\textrm{up}}$, the key difficulty being the characterization of the l.s.c.\ envelope appearing in \eqref{lsc_0112}. This problem is solved in the present section, by Theorem~\ref{thm:envelope} below.

An important realization is that in the the proof of Theorem~\ref{thm:envelope} it is instrumental to work with an alternate characterization of the rate function. Precisely, we introduce the rate function
\begin{equation}\label{lsc_01111}
I(\rho)=\frac{1}{2}\inf\left\{\norm{g}^2_{L^2(\TT^d\times[0,T];\R^d)}\colon \partial_t\rho = \Delta\Phi(\rho)-\nabla\cdot(\Phi^\frac{1}{2}(\rho)g)\right\}
\end{equation}
if $\rho\in\C([0,T];L^1(\TT^d))$ and $\rho(\cdot,0)\in\Ent_{\Phi}(\TT^d)$, and we define $I(\rho)=\infty$ otherwise.  In particular, since the equation in \eqref{lsc_01111} is interpreted in the sense of Definition~\ref{classical_weak} with initial data $\rho(\cdot,0)$, this means that $I(\rho)=\infty$ if $\rho$ fails to satisfy the entropy dissipation estimate
\begin{equation}\label{lsc_0011}\Phi^\frac{1}{2}(\rho)\in L^2([0,T];H^1(\TT^d)),\end{equation}
which using \cite[Lemma~5.4]{FehGes21} and Assumption~\ref{as_phi_reg} is equivalent to $\nabla\Phi^\frac{1}{2}(\rho)\in L^2([0,T];L^2(\TT^d;\R^d)).$ 
Furthermore, we recall by Proposition~\ref{L1-continuity} that solutions of the skeleton equation are always $L^1(\TT^d)$-continuous in time, where by convention we always identify $\rho\in L^1([0,T];L^1(\TT^d))$ with its unique strongly $L^1(\TT^d)$-continuous representative when it exists.

We show that on the space of functions $\rho\in \C([0,T];L^1(\TT^d))$ satisfying the entropy dissipation estimate \eqref{lsc_0011}, we have for the rate functions \eqref{lsc_0111}, \eqref{lsc_113}, and \eqref{lsc_01111} that
\begin{align}\label{lsc_11}
I(\rho) &= I^0(\rho) = I^{\textrm{up}}(\rho) \\&= \frac{1}{2}\sup_{\{\norm{\psi}_{H^1_{\Phi(\rho)}}\leq 1\}}\left(-\int_0^T\int_{\TT^d}\Phi(\rho)\Delta\psi- \int_0^T\int_{\TT^d}\rho\partial_t\psi-\int_{\TT^d}\psi(x,0)\rho(x,0)\right)^2,\nonumber
\end{align}
where the supremum is taken over smooth functions $\psi\in\C^\infty_c(\TT^d\times[0,T))$, and the space  $H^1_{\Phi(\rho)}$ is defined in Definition~\ref{weighted_H1} below.  We show the equivalence of the final three expressions appearing in \eqref{lsc_11} in Lemma~\ref{dual_equiv}, and we prove that $I=I^{\textrm{up}}$ in Proposition~\ref{prop_rate_equivalent}.

\begin{definition}\label{weighted_H1} Let $\Phi\in\C([0,\infty))\cap\C^1_{\textrm{loc}}((0,\infty))$ and let $\rho\in L^\infty([0,T];L^1(\TT^d))$ satisfy \eqref{lsc_0011}.   We first introduce the equivalence relation $\sim$ on $\C^\infty(\TT^d\times[0,T])$ by
\[\psi\sim\phi\;\;\textrm{if and only if}\;\;\int_0^T\int_{\TT^d}\Phi(\rho)\abs{\nabla\psi-\nabla\phi}^2=0.\]
Let $H^1_{\Phi(\rho)}$ be the Hilbert space defined by the completion of the set of equivalence classes $\C^\infty(\TT^d\times[0,T])/\sim$ with respect to the positive definite inner product, for every $\phi,\psi\in \C^\infty(\TT^d\times[0,T])/\sim$,
\[\langle \phi,\psi \rangle_{H^1_{\Phi(\rho)}}=\int_0^T\int_{\TT^d}\Phi(\rho)\nabla\phi\cdot \nabla\psi\;\;\textrm{and}\;\;\norm{\phi}_{H^1_{\Phi(\rho)}}=\langle \phi,\phi\rangle_{H^1_{\Phi(\rho)}}^\frac{1}{2}.\]
\end{definition}

\begin{remark}  In the setting of Definition~\ref{weighted_H1}, let $L^2_{\Phi}(\TT^d\times[0,T];\R^d)$ denote the Sobolev space of measurable vector fields $v\colon\TT^d\times[0,T]\rightarrow\R^d$ satisfying
\[\int_0^T\int_{\TT^d}\Phi(\rho)\abs{v}^2<\infty,\]
which is the Hilbert space of equivalence classes of vector fields with respect to the inner product
\[[v,w]_{\Phi(\rho)}=\int_0^T\int_{\TT^d}\Phi(\rho)v\cdot w\;\;\textrm{for every}\;\;v,w\in L^2_{\Phi(\rho)}(\TT^d\times[0,T];\R^d).\]
It is an immediate consequence of Definition~\ref{weighted_H1} that the Hilbert space $H^1_{\Phi(\rho)}$ is canonically isomorphic to a closed subspace of $L^2_{\Phi}(\TT^d\times[0,T];\R^d)$.  For every $H\in H^1_{\Phi(\rho)}$ we will write $\nabla H$ for the vector field identified by this isomorphism, where for $H\in L^2([0,T];H^1(\TT^d))$ the generalized gradient $\nabla H$ is equal to the standard weak gradient of $H$.
\end{remark}

\begin{lem}\label{dual_equiv}  Let $\Phi\in\C([0,\infty))\cap\C^1_{\textrm{loc}}((0,\infty))$ and let $\rho\in \C([0,T];L^1(\TT^d))$ satisfy \eqref{lsc_0011}.  Then,
\begin{equation}\label{dual_equiv_1} I^{\textrm{\emph{up}}}(\rho)=\frac{1}{2}\sup_{\{\norm{\psi}_{H^1_{\Phi(\rho)}}\leq 1\}}\left(-\int_0^T\int_{\TT^d}\Phi(\rho)\Delta\psi- \int_0^T\int_{\TT^d}\rho\partial_t\psi-\int_{\TT^d}\psi(x,0)\rho(x,0)\right)^2, \end{equation}
where the supremum is taken over $\psi\in\C^\infty_c(\TT^d\times[0,T))$.
\end{lem}

\begin{proof}  It is a consequence of the definitions that the lefthand side of \eqref{dual_equiv_1} is infinite if and only if the righthand side is infinite.  In the case that both sides of \eqref{dual_equiv_1} are finite, it follows from the definitions that the linear functional
\[\psi\in\C^\infty_c(\TT^d\times[0,T))\rightarrow -\int_0^T\int_{\TT^d}\Phi(\rho)\Delta\psi- \int_0^T\int_{\TT^d}\rho\partial_t\psi-\int_{\TT^d}\psi(x,0)\rho(x,0),\]
extends by density to a unique continuous linear function on $H^1_{\Phi(\rho)}$.  The Riesz Representation Theorem proves that there exists $H_\rho\in H^1_{\Phi(\rho)}$ such that, for every $\psi\in\C^\infty_c(\TT^d\times(0,T))$,
\[-\int_0^T\int_{\TT^d}\Phi(\rho)\Delta\psi- \int_0^T\int_{\TT^d}\rho\partial_t\psi=\int_0^T\int_{\TT^d}\Phi(\rho)\nabla H_\rho\cdot\nabla\psi,\]
from which it follows by density of smooth functions in $H^1_{\Phi(\rho)}$ and the definitions that both sides of \eqref{dual_equiv_1} are equal to
\begin{equation}\label{lsc_20}\frac{1}{2}\int_0^T\int_{\TT^d}\Phi(\rho)\abs{\nabla H_\rho}^2=\frac{1}{2}\norm{H_\rho}^2_{H^1_{\Phi(\rho)}},\end{equation}
which completes the proof.  \end{proof}

\begin{prop}\label{prop_rate_equivalent}  Let $\Phi\in\C([0,\infty))\cap\C^1_{\textrm{loc}}((0,\infty))$ and let $\rho\in \C([0,T];L^1(\TT^d))$.  Then,
\[I(\rho) = I^{\textrm{\emph{up}}}(\rho),\]
for the rate functions defined in \eqref{lsc_113} and \eqref{lsc_01111}.
\end{prop}

\begin{proof}  If $\rho$ fails to  satisfy \eqref{lsc_0011} then both rate functions are infinite by definition.  It remains only to consider the case that $\rho$ satisfies \eqref{lsc_0011}.  We will first show that if
\begin{equation}\label{lsc_21}I^{\textrm{up}}(\rho)=\infty,\end{equation}
then $I(\rho)=\infty$.  Proceeding by contradiction, if $I(\rho)<\infty$ then there exists a control $g\in L^2([0,T];L^2(\TT^d;\R^d))$ such that
\begin{equation}\label{lsc_skel}\partial_t\rho = \Delta\Phi(\rho)-\nabla\cdot(\Phi^\frac{1}{2}(\rho)g)\;\;\textrm{in}\;\;\TT^d\times(0,T),\end{equation}
in the sense of Definition~\ref{classical_weak}.  It follows from an approximation argument and the Lebesgue differentiation theorem that $\rho$ satisfies \eqref{lsc_skel} in the sense of Definition~\ref{classical_weak} if and only if $\Phi^\frac{1}{2}(\rho)\in L^2([0,T];H^1(\TT^d))$ and, for every $\psi\in\C^\infty_c(\TT^d\times[0,T))$,
\[2\int_0^T\int_{\TT^d}\nabla\Phi^\frac{1}{2}(\rho)\cdot\Phi^\frac{1}{2}(\rho)\nabla\psi -\int_0^T\int_{\TT^d}\rho\partial_t\psi-\int_{\TT^d}\rho_0(x)\psi(x,0) = \int_0^T\int_{\TT^d}g\cdot\Phi^\frac{1}{2}(\rho)\nabla\psi,\]
which implies using \eqref{lsc_0011}, H\"older's inequality, and Young's inequality that
\[I^{\textrm{up}}(\rho)=\sup_{\psi\in\C^\infty_c(\TT^d\times[0,T))}\left(\int_0^T\int_{\TT^d}g\cdot \Phi^\frac{1}{2}(\rho)\nabla\psi-\frac{1}{2}\int_0^T\int_{\TT^d}\Phi(\rho)\abs{\nabla\psi}^2\right)\leq \frac{1}{2}\norm{g}^2_{L^2}<\infty,\]
contradicting \eqref{lsc_21}.  It remains to consider the case that \eqref{lsc_21} is finite.  A repetition of the argument appearing in Lemma~\ref{dual_equiv} and \eqref{lsc_20} proves that, in this case, there exists $H_\rho\in H^1_{\Phi(\rho)}$ such that
\begin{equation}\label{lsc_2020} I^{\textrm{up}}(\rho)=\frac{1}{2}\norm{H_\rho}^2_{H^1_{\Phi(\rho)}}, \end{equation}
and such that
\[\partial_t\rho = \Delta\Phi(\rho)-\nabla\cdot(\Phi(\rho)\nabla H_\rho)\;\;\textrm{in}\;\;\TT^d\times(0,T).\]
This proves that $I(\rho)\leq \frac{1}{2}\norm{\Phi^\frac{1}{2}(\rho)\nabla H_\rho}^2_{L^2}=\frac{1}{2}\norm{H_\rho}^2_{H^1_{\Phi(\rho)}}$.  To prove the reverse inequality, consider the control set
\[\textrm{Cont}_\rho = \left\{g\in L^2(\TT^d\times[0,T];\R^d)\colon \partial_t\rho = \Delta\Phi(\rho)-\nabla\cdot(\Phi^\frac{1}{2}(\rho)g)\right\},\]
which is equivalent to the set
\[\textrm{Cont}_\rho=\left\{g\in L^2(\TT^d\times[0,T];\R^d)\colon -\nabla\cdot(\Phi(\rho)\nabla H_\rho)=-\nabla\cdot(\Phi^\frac{1}{2}(\rho)g)\;\;\textrm{in}\;\;H^{-1}_{\Phi(\rho)}\right\},\]
for the dual space $H^{-1}_{\Phi(\rho)}$ of $H^1_{\Phi(\rho)}$.  Here, the action of $-\nabla\cdot(\Phi^\frac{1}{2}(\rho)g)$ on $H^1_{\Phi(\rho)}$ is defined by
\[[-\nabla\cdot(\Phi^\frac{1}{2}(\rho)g)](H) = \int_0^T\int_{\TT^d}g\cdot\Phi^\frac{1}{2}(\rho)\nabla H\;\;\textrm{for every}\;\; H \in H^1_{\Phi(\rho)}.\]
Since it follows by definition that $\Phi^\frac{1}{2}(\rho)\nabla H_\rho$ is an admissible control, we have for every $g\in \textrm{Cont}_\rho$ that
\[\int_0^T\int_{\TT^d}\Phi(\rho)\abs{\nabla H_\rho}^2 = \int_0^T\int_{\TT^d}g\cdot \Phi^\frac{1}{2}(\rho)\nabla H_\rho.\]
It then follows from H\"older's inequality that, for every $g\in \textrm{Cont}_\rho$,
\[\norm{H_\rho}_{H^1_{\Phi(\rho)}}\leq \norm{g}_{L^2(\TT^d\times[0,T];\R^d)},\]
and therefore that $I(\rho) = \frac{1}{2}\norm{H_\rho}^2_{H^1_{\Phi(\rho)}}$.  Together with \eqref{lsc_2020} this completes the proof. \end{proof}

\begin{definition}\label{smooth_fluctuation}  Let $\Phi\in\C([0,\infty))\cap\C^1_{\textrm{loc}}((0,\infty))$ satisfy Assumptions~\ref{as_unique}, ~\ref{as_equiv}, and \ref{as_compact}.  Let $\mathcal{S}\subseteq \C([0,T];L^1(\TT^d))$ denote the subset of all strictly positive and bounded $\rho$ such that there exist a strictly positive $\rho_0\in \C^\infty(\TT^d)$ and an $H\in \C^{3,1}(\TT^d\times[0,T])$ such that
\[\partial_t\rho  = \Delta\Phi(\rho)-\nabla\cdot\left(\Phi(\rho)\nabla H\right)\;\;\textrm{in}\;\;\TT^d\times(0,T)\;\;\textrm{with}\;\;\rho(\cdot,0)=\rho_0,\]
in the sense of Definition~\ref{classical_weak} with control $g=\Phi^\frac{1}{2}(\rho)\nabla H$.
\end{definition}

\begin{thm}\label{thm:envelope}  Let $\Phi\in\C([0,\infty))\cap\C^3_{\textrm{loc}}((0,\infty))$ satisfy Assumptions~\ref{as_unique}, ~\ref{as_equiv}, and \ref{as_compact}.  Then, for the rate functions defined in \eqref{lsc_0112} and \eqref{lsc_01111},
\[I^{\textrm{\emph{lo}}}=I.\]
That is, $I$ is the l.s.c.\ envelope of $I^0$ restricted to smooth fluctuations $\mcS$ and $I=I^{\textrm{\emph{lo}}}=I^{\textrm{\emph{up}}}$.
\end{thm}

\begin{proof}  It follows from Proposition~\ref{prop_rate_equivalent} and Definition~\ref{smooth_fluctuation} that $I=I^0=I^{\textrm{up}}$ on $\mcS$.  Since then by definition $I^{\textrm{lo}}$ is the l.s.c.\ envelope of $I$ restricted to $\mcS$, it suffices to show that $I$ is equal to the l.s.c.\ envelope of its restriction to $\mcS$.

Since $I$ is itself l.s.c.\ it follows that $I\leq I^{\textrm{lo}}$.  It remains to show that, for any $\rho\in \C([0,T];L^1(\TT^d))$ satisfying $I(\rho)<\infty$, there exists a sequence $\rho_n\in\mathcal{S}$ such that, as $n\rightarrow \infty$,
\[\rho_n\rightarrow\rho\;\;\textrm{strongly in $L^1([0,T];L^1(\TT^d))$}\;\;\textrm{and}\;\;I(\rho_n)\rightarrow I(\rho).\]
Using Lemma~\ref{rf_min}, let $g\in L^2([0,T];L^2(\TT^d))$ be the unique control satisfying
\[\partial_t\rho = \Delta \Phi(\rho)-\nabla\cdot(\Phi^\frac{1}{2}(\rho)g)\;\;\textrm{with}\;\;I(\rho)=\frac{1}{2}\norm{g}^2_{L^2([0,T];L^2(\TT^d;\R^d))}.\]
We will now construct the smooth approximation.  This occurs in three steps:  we first introduce a smoothing of the control $g$ and then a smoothing and perturbation of the initial data $\rho(\cdot,0)\in\Ent_{\Phi}(\TT^d)$ so that it becomes strictly positive.  Finally, we introduce a cutoff that effectively ``turns off'' the control when it forces the solution near zero or infinity.  This guarantees that the approximate solutions remain strictly bounded away from zero and infinity---where the nonlinearity $\Phi$ remains uniformly elliptic---and allows for the application of standard parabolic and elliptic regularity estimates to prove the regularity of the control in the sense of Definition~\ref{smooth_fluctuation}.

\textit{Step 1:  The smooth approximation.}  Extend $g$ to $\TT^d\times\R$ by defining $g=0$ on the complement of $\TT^d\times[0,T]$, for every $\ve\in(0,1)$ let $\kappa_{d+1}^\ve$ be a standard convolution kernel on $\TT^d\times\R$ of scale $\ve$, and for every $n\in\N$ let $g_n$ be the smooth control defined by
\[g_n = (g*\kappa_{d+1}^{\nicefrac{1}{n}}).\]
For every $\ve\in(0,1)$ let $\kappa^\ve_d$ be a smooth convolution kernel on $\TT^d$ and for every $n\in\N$ let $\rho_{0,n}$ be the smooth, bounded, and strictly positive initial data defined by
\[\rho_{0,n} = ((\rho(\cdot,0)\vee\nicefrac{1}{n})\wedge n)*\kappa^{\nicefrac{1}{n}}.\]
For every $n\in\N$ let $\psi_n\colon[0,\infty)\rightarrow[0,1]$ be a smooth function that satisfies
\[\psi_n = 0\;\;\textrm{on}\;\;[0,\nicefrac{1}{2n}]\cup[2n,\infty)\;\;\textrm{and}\;\;\psi_n = 1\;\;\textrm{on}\;\;[\nicefrac{1}{n},n].\]
A small adaptation of the proofs of Theorem~\ref{thm_unique}, Proposition~\ref{L1-continuity}, and Theorem~\ref{equiv} in this simplified setting prove that, for every $n\in\N$, there exists a unique solution let $\rho_n\in\C([0,T];L^1(\TT^d))$ of the equation
\[\partial_t\rho_n  = \Delta\Phi(\rho_n)-\nabla\cdot(\Phi^\frac{1}{2}(\rho_n)\psi_n(\rho_n)g_n)\;\;\textrm{in}\;\;\TT^d\times(0,T)\;\;\textrm{with}\;\;\rho_n(\cdot,0)=\rho_{0,n},\]
in the sense of Definition~\ref{classical_weak} with control $\psi_n(\rho_n)g_n$.  We will first show that the $\rho_n$ correspond to smooth fluctuations in the sense of Definition~\ref{smooth_fluctuation}.  The comparison principle, the definition of $\psi_n$, and the definition of $\rho_{0,n}$ prove that
\[\nicefrac{1}{2n}\leq \rho_n\leq 2n\;\;\textrm{on}\;\;\TT^d\times[0,T].\]
Since we have that $\psi_n$, $g_n$, and $\rho_{0,n}$ are smooth and bounded, that $\Phi\in\C^3_{\textrm{loc}}((0,\infty))$ with $\Phi'$ strictly positive on $(0,\infty)$, and that $\rho_n$ is bounded and bounded away from zero, it follows from interior Schauder estimates (see, for example, Lady\u{z}henskaya, Solonnikov, and Ural'ceva \cite{Lad1967}) that $\rho_n\in\C^{3,2}(\TT^d\times[0,T])$.  We view the $\rho_n$ as satisfying the equation
\[\partial_t\rho_n  = \Delta\Phi(\rho_n)-\nabla\cdot(\Phi^\frac{1}{2}(\rho_n)\tilde{g}_n)\;\;\textrm{in}\;\;\TT^d\times(0,T)\;\;\textrm{with}\;\;\rho_n(\cdot,0)=\rho_{0,n},\]
with the control $\tilde{g}_n = \psi_n(\rho_n)g_n$.  We observe that the positivity and boundedness of $\rho_n$ and positivity of $\Phi$ on $(0,\infty)$ prove that elements of $H^1_{\Phi(\rho_n)}$ can be identified with a unique element of the Sobolev space $L^2([0,T];H^1(\TT^d))$ that has zero spatial mean on almost every time slice.  It then follows from the proof of Proposition~\ref{prop_rate_equivalent} that for every $n\in\N$ there exists $H_n\in L^2([0,T];H^1(\TT^d))$ such that
\[\partial_t\rho_n  = \Delta\Phi(\rho_n)-\nabla\cdot(\Phi(\rho_n)\nabla H_n)\;\;\textrm{in}\;\;\TT^d\times(0,T)\;\;\textrm{with}\;\;\rho_n(\cdot,0)=\rho_{0,n}.\]
Hence, we have for every $n\in\N$ that $H_n$ is a solution to the uniformly elliptic equation
\[-\nabla\cdot (\Phi(\rho_n)\nabla H_n) = \partial_t\rho_n-\Delta\Phi(\rho_n)\;\;\textrm{in}\;\;\TT^d\times(0,T),\]
from which we have from $\nicefrac{1}{2n}\leq \rho_n\leq 2n$, the local $\C^3$-regularity and positivity of $\Phi$ on $(0,\infty)$, the $\C^{3,2}$-regularity of $\rho_n$, and interior elliptic regularity estimates (see, for example, \cite{Lad1967}) that $H_n\in \C^{3,1}(\TT^d\times[0,T])$.  This completes the proof that $\rho_n\in\mathcal{S}$ for every $n\in\N$.

\textit{Step 2: The strong convergence of the smooth approximations.}  It remains to prove that, along a subsequence as $n\rightarrow\infty$,
\[\rho_n\rightarrow\rho\;\;\textrm{strongly in}\;\;L^1([0,T];L^1(\TT^d))\;\;\textrm{and}\;\; I(\rho_n)\rightarrow I(\rho).\]
A small adaptation of Proposition~\ref{gen_exist} in this simplifed setting proves that, for every $n\in\N$, for almost every $t\in[0,T]$, we have that $\norm{\rho_n(\cdot,t)}_{L^1(\TT^d)}=\norm{\rho_{0,n}}_{L^1(\TT^d)}$, and the estimates proven in Proposition~\ref{frac_est} and the definitions of $\rho_{0,n}$, $g_n$, and $\psi_n$ prove that there exists $c\in(0,\infty)$ independent of $n\in\N$ such that
\begin{equation}\label{lsc_40}\int_{\TT^d}\Psi_\Phi(\rho_n(x,t))+\int_0^t\int_{\TT^d}\abs{\nabla\Phi^\frac{1}{2}(\rho_n(x,s))}^2 \leq c\left(\int_{\TT^d}\Psi_\Phi(\rho(x,0))+\norm{g}^2_{L^2(\TT^d\times[0,T];\R^d)}\right).\end{equation}
A repetition of the argument leading to \eqref{ge_8} and the definitions of $\psi_n$ and $g_n$ prove that, for $c\in(0,\infty)$ independent of $n\in\N$,
\begin{equation}\label{lsc_41}  \norm{\partial_t\rho_n}_{L^1([0,T];H^{-(\nicefrac{d}{2}+2)}(\TT^d))} \leq c\left(\norm{\Phi^\frac{1}{2}(\rho_n)}^2_{L^2([0,T];H^1(\TT^d))}+\norm{g}^2_{L^2([0,T];L^2(\TT^d))}\right),\end{equation}
which in view of \eqref{lsc_40} is uniformly bounded in $n\in\N$.

The uniform $L^1(\TT^d)$-boundedness of the $\rho_n$ and estimates \eqref{lsc_40}, \eqref{lsc_41}, and Assumption~\ref{as_interpolate} prove that the $\rho_n$ are relatively compact in $L^1([0,T];L^1(\TT^d))$.  Therefore, after passing to a subsequence still denoted $n\rightarrow\infty$, there exists $\tilde{\rho}\in L^1([0,T];L^1(\TT^d))$ such that
\begin{equation}\label{lsc_42}\rho_n\rightarrow\tilde{\rho}\;\;\textrm{strongly in}\;\;L^1([0,T];L^1(\TT^d))\;\;\textrm{and almost surely on}\;\;\TT^d\times[0,T],\end{equation}
and using Lemma~\ref{as_pre} such that
\begin{equation}\label{lsc_420}\Phi^\frac{1}{2}(\rho_n)\rightarrow\Phi^\frac{1}{2}(\tilde{\rho})\;\;\textrm{weakly in}\;\;L^2([0,T];H^1(\TT^d))\;\;\textrm{and strongly in}\;\;L^2([0,T];L^2(\TT^d)).\end{equation}
Furthermore, it follows from the definition of the $\rho_{0,n}$ that
\begin{equation}\label{lsc_31}\rho_n(\cdot,0)\rightarrow\rho(\cdot,0)\;\;\textrm{strongly in}\;\;L^1(\TT^d).\end{equation}
Since $g_n$ is a convolution of $g$, we have that $g_n\rightarrow g$ strongly in $L^2(\TT^d\times[0,T];\R^d)$ as $n\rightarrow\infty$.  It then follows from the almost sure convergence of \eqref{lsc_42}, the definition and boundedness of $\psi_n$, \eqref{lsc_420}, and H\"older's inequality that, along the subsequence as $n\rightarrow\infty$,
\begin{equation}\label{lsc_43} \Phi^\frac{1}{2}(\rho_n)\psi_n(\rho_n)g_n\mathbf{1}_{\{\tilde{\rho}>0\}} \rightarrow \Phi^\frac{1}{2}(\tilde{\rho})g\mathbf{1}_{\{\tilde{\rho}>0\}}\;\;\textrm{strongly in}\;\;L^1(\TT^d\times[0,T];\R^d). \end{equation}
It follows similarly from the almost sure convergence of \eqref{lsc_42}, the continuity of $\Phi$ and $\Phi(0)=0$, the fact that $g_n\rightarrow g$ strongly as $n\rightarrow\infty$, \eqref{lsc_420}, and the boundedness and definition of $\psi_n$ that, along the subsequence as $n\rightarrow\infty$,
\begin{equation}\label{lsc_430} \Phi^\frac{1}{2}(\rho_n)\psi_n(\rho_n)g_n\mathbf{1}_{\{\tilde{\rho}=0\}}\rightarrow 0\;\;\textrm{strongly in}\;\;L^1(\TT^d\times[0,T];\R^d).\end{equation}
In combination, it follows from \eqref{lsc_43}, \eqref{lsc_430}, and $\Phi(0)=0$ that, along the subsequence as $n\rightarrow\infty$,
\begin{equation}\label{lsc_4300} \Phi^\frac{1}{2}(\rho_n)\psi_n(\rho_n)g_n\rightarrow \Phi^\frac{1}{2}(\tilde{\rho})g\;\;\textrm{strongly in}\;\;L^1(\TT^d\times[0,T];\R^d).\end{equation}
Since \eqref{lsc_420} and the weak lower semicontinuity of the Sobolev norm prove that $\tilde{\rho}$ satisfies estimate \eqref{lsc_40}, it follows from Definition~\ref{classical_weak}, \eqref{lsc_42}, \eqref{lsc_31}, and \eqref{lsc_4300} that $\tilde{\rho}$ is a solution of the equation
\[\partial_t\tilde{\rho} = \Delta\Phi(\tilde{\rho})-\nabla\cdot(\Phi^\frac{1}{2}(\tilde{\rho})g)\;\;\textrm{in}\;\;\TT^d\times(0,T)\;\;\textrm{with}\;\;\tilde{\rho}(\cdot,0)=\rho(\cdot,0),\]
in the sense of Definition~\ref{classical_weak}.  We therefore conclude using Theorem~\ref{thm_unique} and Theorem~\ref{equiv} that $\tilde{\rho}=\rho$ and that the $\rho_n$ converge strongly to $\rho$ along the subsequence $n\rightarrow\infty$.

\textit{Step 3: Convergence of the rate functions.}  It remains only to show that, along the subsequence $n\rightarrow\infty$, we have that
\[I(\rho_n)\rightarrow I(\rho).\]
By Lemma~\ref{rf_min} let $h_n\in L^2(\TT^d\times[0,T];\R^d)$ be the unique function satisfying, using the definitions of the rate function, $\psi_n$, and $g_n$,
\begin{equation}\label{lsc_45}I(\rho_n)=\norm{h_n}^2_{L^2}\leq \frac{1}{2}\norm{\psi_n(\rho_n)g_n}^2_{L^2}\leq \frac{1}{2}\norm{g_n}^2_{L^2}\;\;\textrm{with}\;\;\lim_{n\rightarrow\infty}\norm{g_n}^2_{L^2}=\norm{g}^2_{L^2}.\end{equation}
After passing to a subsequence $n\rightarrow\infty$, for some $h\in L^2([0,T];L^2(\TT^d))$,
\[h_n\rightharpoonup h\;\;\textrm{weakly in}\;\;L^2([0,T];L^2(\TT^d;\R^d)),\]
from which it follows from \eqref{lsc_45} that $\norm{h}_{L^2}\leq\norm{g}_{L^2}$.  Since Proposition~\ref{weak_strong} proves that $\rho$ solves
\[\partial_t\rho = \Delta \Phi(\rho)-\nabla\cdot(\Phi^\frac{1}{2}(\rho)h)\;\;\textrm{in}\;\;\TT^d\times(0,\infty),\]
we have by definition of the rate function that $\norm{h}_{L^2}\geq \norm{g}_{L^2}$ and therefore that $\norm{h}_{L^2}= \norm{g}_{L^2}$.  Lemma~\ref{rf_min} proves that that $h=g$, which implies that
\[\norm{g}_{L^2}=\norm{h}_{L^2}\leq\liminf_{n\rightarrow\infty}\norm{h_n}_{L^2}\leq\limsup_{n\rightarrow\infty}\norm{h_n}_{L^2}\leq\limsup_{n\rightarrow\infty}\norm{g_n}_{L^2}= \norm{g}_{L^2},\]
and therefore, as $n\rightarrow\infty$,
\[\lim_{n\rightarrow\infty}I(\rho_n)=\lim_{n\rightarrow\infty}\frac{1}{2}\norm{h_n}^2_{L^2} = \frac{1}{2}\norm{g}^2_{L^2}=I(\rho),\]
which completes the proof that $I=I^{\textrm{lo}}$.  The final claim is then a consequence of Proposition~\ref{prop_rate_equivalent}.\end{proof}

\section*{Acknowledgements}

The first author acknowledges financial support from the EPSRC through the EPSRC Early Career Fellowship EP/V027824/1.  This work was funded by the Deutsche Forschungsgemeinschaft (DFG, German Research Foundation) -- SFB 1283/2 2021 -- 317210226.  The authors thank Claudio Landim for pointing out the works \cite{La.Ts2018,Qu.Re.Va1999}.

\bibliography{WhiteNoise}
\bibliographystyle{plain}

\end{document}